  \theoremstyle{definition}
  \newtheorem{ddd}{Definition}[section]
  \theoremstyle{plain}
  \newtheorem{ttt}[ddd]{Theorem}
  \newtheorem{llll}[ddd]{Lemma}
  \theoremstyle{remark}
 \newcommand{\mdeg}{\mathrm{mdeg}}
 \newcommand{\supp}{\mathrm{supp}}
\begin{document}
  \title[Universal equivalence]{Universal Equivalence of Partially Commutative  Metabelian Lie Algebras}
  \author{E.\,N.\,Poroshenko, E.\,I.\,Timoshenko}
  \date{}
  \begin{abstract}
    In this paper, we find a criterium for universal equivalence of partially commutative Lie
    algebras whose defining graphs are trees. Besides, we obtain
    bases for partially commutative metabelian Lie algebras.
  \end{abstract}

  \maketitle

  \section{Introduction}

  Let
  $G=\langle X,E\rangle$ be an undirected graph without loops with the finite  set of vertices
  $X=\{x_1,\dots x_n\}$ and the set of edges
  $E$
  ($E\subseteq X\times X$). We denote the elements of
  $E$ by
  $\{x,y\}$.

  Consider a variety
  $\mathfrak M$ of Lie algebras over a ring
  $R$. A \emph{partially commutative Lie algebra in
  $\mathfrak M$ with a defining graph}
  $G$ is a Lie algebra
  $\mathcal{L}_R(X; G)$ defined as
  $$\mathcal{L}_R(X; G) = \langle X \,|\,  [x_i, x_j]=
      0 \Longleftrightarrow   \{x_i, x_j\} \in E; \,\,\,\,\mathfrak{M} \, \rangle$$
  in
  $\mathfrak{M}$. Thus, in this algebra, the variety identities and the defining relations
  hold together.  If there is no ambiguity denote this algebra just by
  $\mathcal{L}(X;G)$. For simplicity (to avoid using the notation for the set of edges), we write
  $\{x_i,x_j\}\in G$ instead of
  $\{x_i,x_j\}\in E$.

  Usually, the varieties whose identities do not
  imply additional relations of vertices' commutation are considered. It means that two
  vertices commute if and only if they are adjacent in
  $G$. In this paper, we study partially commutative metabelian Lie
  algebras. These algebras clearly possess the  property indicated above. It follows,
  for example, from the structure of the bases for partially commutative
  metabelian Lie algebras (see Theorem~%
\ref{baspcom}).

  Along with the variety of Lie algebras, one can consider other varieties
  of algebras and groups. The most actively studied objects of this kind
  are partially commutative groups which are defined by commutativity
  relations in the variety of all groups. Some papers (see,
\cite{GT09,Tim10,GT11} for example), are devoted to universal
  theories of partially commutative metabelian groups. In
\cite{CF69} and
 \cite{ML80}, partially commutative associative algebras were studied.

  This paper is organized as follows. In Sec.~%
\ref{prelim}, preliminary definitions and results are given.

  In Sec.~%
\ref{pcalgbas}, we find bases for partially commutative metabelian
  Lie (see Theorem~%
 \ref{baspcom}). This theorem is used a great deal in the paper but it
  is also interesting in itself.

  In Sec.~%
\ref{ann&cent}, we prove Theorem
 \ref{ann} and Theorem
 \ref{centrgen} which give information on the centralizers of some
  elements and on the annihilators of some elements in the derived
  subalgebra of partially commutative metabelian Lie algebras.
  We need these results for the study of the universal
  theories of partial commutative metabelian Lie algebras.

  The main result of the paper is Theorem~%
\ref{main} which is proved in Sec.~%
 \ref{univequiv}. This is a
  criterium for coincidence of the universal theories of partially
  commutative metabelian Lie algebras whose defining graphs are trees.
  It is easy to verify if this condition holds. So, the problem of universal
  equivalence for the algebras defined above  is algorithmically solvable.

  \section{Preliminaries}\label{prelim}

  Let
  $M(X)$ denote the free metabelian Lie
  $R$-algebra with the set of generators
  $X$. A \emph{partially commutative metabelian Lie algebra} generated by
  $X$ with the defining graph
  $G$ is the Lie algebra
  $M(X;G)=M(X)/I$, i.e. this is the Lie algebra, defined by the set of relations
  \begin{equation}\label{rel}
    [x_i,x_j]=0, \text{ if } \{x_i,x_j\} \in G
  \end{equation}
  in the variety of metabelian Lie algebras.
  \begin{ddd}
    Let
    $G$ be a graph. A vertex of
    $G$ is called an \emph{endpoint} if its degree is equal to
    $1$.
  \end{ddd}

  In the paper, we need a couple results with reference to free
  metabelian Lie algebras. Let us formulate these results here.
  \begin{ttt}\label{freeproduct}
\cite{Shme64}
    Let
    $L$ be the free polynilpotent Lie ring corresponding to the sequence
    $n_1,\dots, n_k$,
    ($k\geqslant 2$). If
    $x,y\in L$ are such that
    $[x,y]=0$ then either
    $x$ and
    $y$ are linearly dependent or
    $x,y\in L_{n_1,\dots, n_k}$.
  \end{ttt}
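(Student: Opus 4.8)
The plan is to argue by induction on $k$, peeling off one nilpotent layer at a time and pushing the relation $[x,y]=0$ down to the bottom of the polynilpotent series. Write $T=L_{n_1,\dots,n_k}$ for that bottom term; it is nilpotent of class $n_k$, and $\bar L=L/T$ is the free polynilpotent Lie ring of type $(n_1,\dots,n_{k-1})$ (free nilpotent of class $n_1$ when $k=2$). The whole argument rests on one structural input, which I will call the \emph{engine}: by Shmel'kin's structure theory for free polynilpotent Lie rings, each quotient $T_j/T_{j+1}$ of the lower central series of $T$ is a torsion-free module, under the adjoint action, over the universal enveloping ring $U(\bar L)$, which is an integral domain (note that $T$ acts trivially on $T_j/T_{j+1}$ since $[T,T_j]\subseteq T_{j+1}$, so the action genuinely factors through $\bar L$). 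The consequence I need is that \emph{for every $x\notin T$ the operator $\mathrm{ad}(x)$ is injective on $T$}: if $0\neq y\in T$ and $j$ is maximal with $y\in T_j$, then $[x,y]+T_{j+1}$ equals $\bar x\cdot(y+T_{j+1})$, which is nonzero because $\bar x\neq 0$ acts injectively on the torsion-free $U(\bar L)$-module $T_j/T_{j+1}$.

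Granting the engine, the case analysis is short. Assume $[x,y]=0$. If $x,y\in T$, the second alternative holds. If exactly one of them lies outside $T$, say $x\notin T$ and $y\in T$, then $\mathrm{ad}(x)(y)=[x,y]=0$ forces $y=0$ by injectivity, so $x,y$ are dependent. The remaining case is $x,y\notin T$. Projecting to $\bar L$ gives $[\bar x,\bar y]=0$, and the induction hypothesis applies to $\bar L$: either $\bar x,\bar y$ are dependent, or they both lie in the bottom term $\bar T$ of $\bar L$. In the dependent case pick scalars $\alpha,\beta$, not both zero, with $\beta y-\alpha x\in T$; then $\mathrm{ad}(x)(\beta y-\alpha x)=\beta[x,y]-\alpha[x,x]=0$, so $\beta y-\alpha x=0$ by injectivity and $x,y$ are dependent. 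If instead $\bar x,\bar y\in\bar T$, then $x,y$ lie in the penultimate term $T'$ of the series (the preimage of $\bar T$); here I use the second structural input, that $T'$ is itself relatively free of polynilpotent type $(n_{k-1},n_k)$, so that the two-layer case of the theorem applies to $x,y\in T'$ and yields that they are dependent or both lie in the bottom term of $T'$, which is precisely $T$. In every branch we reach one of the two alternatives, closing the induction.

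For the base case $k=2$ one analyzes commuting elements of the free nilpotent ring $\bar L$ of class $n_1$. Passing to lowest-degree homogeneous components and invoking the Shirshov--Witt theorem (two elements of a free Lie ring with vanishing bracket generate a free subalgebra of rank at most one, hence are dependent), one shows that independent commuting elements of $\bar L$ must both lie in the top term $\gamma_{n_1}(\bar L)$: each time the leading parts turn out proportional, one subtracts a scalar multiple to raise the lowest degree of the independent part, and this cannot continue below the top term. Combined with the engine applied to $T$, this settles the two-layer situation that the induction relies on.

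The genuine obstacle is not the commutator bookkeeping above but the engine and the relative freeness of $T'$: one must know that the lower-central layers of the bottom ideal are torsion-free modules over $U(\bar L)$ and that the penultimate term is again relatively free polynilpotent. These are exactly the heart of Shmel'kin's embedding and structure results for free polynilpotent Lie rings, and once they are in hand every remaining step is routine.
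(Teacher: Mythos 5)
First, a point of order: the paper does not prove Theorem~\ref{freeproduct} at all --- it is imported from Shmelkin's work \cite{Shme64} with a bare citation, followed only by the remark that what is actually used is the metabelian case over an integral domain (for which the required ``engine'' is precisely the paper's own Theorem~\ref{torsfree}). So there is no proof here to compare yours against, and I can only assess your sketch on its own terms. Its architecture is reasonable: injectivity of $\mathrm{ad}(x)$ on the bottom term $T$ for $x\notin T$, deduced from torsion-freeness of the lower central factors of $T$ over the domain $U(L/T)$, plus induction on $k$; the two branches of your case $x,y\notin T$ are handled correctly modulo the structural inputs you defer to Shmelkin. (There is an unavoidable circularity in proving Shmelkin's theorem from Shmelkin's structure theory, but as a reduction that is legitimate.)

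The genuine gap is the base case. The statement you claim for the free nilpotent ring $\bar L$ of class $n_1$ --- that linearly independent commuting elements must both lie in the top term $\gamma_{n_1}(\bar L)$ --- is false. In the free nilpotent ring of class $2$ on $x_1,x_2$ the elements $x_1$ and $x_1+[x_1,x_2]$ commute (their bracket lies in $\gamma_3=0$), are linearly independent, and $x_1\notin\gamma_2$; more generally $u$ and $u+c$ commute for any central $c$. Your subtraction procedure in $\bar L$ stalls at exactly this point: once the lowest degree of the remaining independent part is large enough, the bracket of the leading components vanishes for nilpotency reasons and Shirshov--Witt gives no information. Worse, in the metabelian case ($n_1=1$) the quotient $\bar L$ is abelian, so passing to $\bar L$ discards everything. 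The repair is to run the same lowest-degree argument inside $L$ itself rather than in the quotient: $L$ is graded by degree, it agrees with the free Lie ring in all degrees below $(n_1+1)(n_2+1)$, and $T$ is exactly the sum of the homogeneous components of degree greater than $n_1$, so an element lies outside $T$ iff its lowest component has degree $\leqslant n_1$. Then $[x_d,y_e]=0$ for the lowest components takes place in the free Lie ring, Shirshov--Witt forces $d=e$ and proportionality, one subtracts, and the moment the remainder $\beta y-\alpha x$ falls into $T$ your engine kills it. With the base case restated this way (and with the deferred structure theory granted) the induction closes; as written, the step you actually state is a false lemma.
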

  We need this statement only in the case of a free metabelian Lie algebra.
  It is easy to see that Theorem~
\ref{freeproduct} also holds for a free metabelian Lie algebras
  over any integral domain.

  Usually, to study an algebra, it is very useful to know its basis. In
\cite{Bo63,Shme64}, linear bases for free polynilpotent Lie
  algebras were found. Since we need only the structure of a basis for metabelian Lie
  algebras, we give the description of a basis in this case only.

  \begin{ttt}\label{freemetabbas}
    A basis of the free metabelian Lie algebra
    $M(X)$ consists of the set
    $X$ together with Lie monomials of the form
    \begin{equation}\label{basis}
      [\dots[x_{i_1},x_{i_2}],\dots, x_{i_m}],\quad \text{ where } m\geqslant 2,\quad
      x_{i_2}<x_{i_1},\quad
      x_{i_2}\leqslant x_{i_3}\leqslant \dots \leqslant x_{i_m}.
    \end{equation}
  \end{ttt}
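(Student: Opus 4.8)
The plan is to prove, in two steps, that the set $X$ together with the monomials \eqref{basis} spans $M(X)$ and is linearly independent. Throughout I use the metabelian identity $[[a,b],[c,d]]=0$ and the Jacobi identity. Let $M'=[M(X),M(X)]$; since $M(X)$ is metabelian, $M'$ is abelian and the adjoint action of $M(X)$ on $M'$ factors through the abelianization, which is $R$-free on the images of $x_1,\dots,x_n$. Thus $M'$ is a module over $S=R[t_1,\dots,t_n]$ with $t_i$ acting as $\mathrm{ad}\,x_i$, and a left-normed monomial reads $[x_{i_1},\dots,x_{i_m}]=[x_{i_1},x_{i_2}]\cdot t_{i_3}\cdots t_{i_m}$.

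For spanning, recall that in any Lie algebra every element of degree $m$ is an $R$-combination of left-normed monomials, so it suffices to straighten such a monomial. If $m\geqslant 3$ and $p\geqslant 3$, then with $w=[x_{i_1},\dots,x_{i_{p-1}}]\in M'$ the Jacobi identity gives $[[w,x_{i_p}],x_{i_{p+1}}]-[[w,x_{i_{p+1}}],x_{i_p}]=[w,[x_{i_p},x_{i_{p+1}}]]\in[M',M']=0$, so tail entries in positions $\geqslant 3$ may be transposed and hence sorted nondecreasingly; anticommutativity arranges the seed so that $x_{i_2}<x_{i_1}$. If a tail entry is still smaller than $x_{i_2}$, the relation $[[x_a,x_b],x_c]=[[x_a,x_c],x_b]-[[x_b,x_c],x_a]$ (again Jacobi) rewrites the monomial as a combination of monomials whose seed second index is strictly smaller; iterating lowers the seed second index to the minimum of all indices, producing exactly the monomials \eqref{basis}.

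For independence I would construct a representation separating these monomials. Let $W=\bigoplus_{i=1}^{n}Se_i$ be the free $S$-module viewed as an abelian Lie algebra, let $A$ be the abelian Lie algebra on $a_1,\dots,a_n$ acting by $a_i\cdot(fe_j)=(t_if)e_j$, and form the metabelian semidirect product $L=A\ltimes W$. The assignment $x_i\mapsto a_i+e_i$ extends, by freeness of $M(X)$ in the metabelian variety, to a homomorphism $\psi\colon M(X)\to L$; a direct computation gives $\psi([x_i,x_j])=t_ie_j-t_je_i$, and since bracketing an element of $W$ with $x_k$ multiplies it by $\pm t_k$, the monomial \eqref{basis} is sent, up to sign, to $t_{i_1}t_{i_3}\cdots t_{i_m}e_{i_2}-t_{i_2}t_{i_3}\cdots t_{i_m}e_{i_1}$. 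I would then track the \emph{distinguished term} $t_{i_2}\cdots t_{i_m}e_{i_1}$: its component index $i_1$ strictly exceeds the least index in its $S$-monomial, whereas every other term $t_{i_1}t_{i_3}\cdots t_{i_m}e_{i_2}$ has component index $i_2$ at most the least index of its monomial. Hence distinguished terms are disjoint from the remaining terms, and the pair (component $i_1$, monomial $t_{i_2}\cdots t_{i_m}$) recovers the monomial \eqref{basis}; so distinct basis monomials have distinct distinguished terms. No nontrivial $R$-combination of the $\psi$-images can therefore vanish, which yields linear independence.

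The straightening for spanning is routine; the essential obstacle is independence. Its two load-bearing points are the well-definedness of $\psi$ (guaranteed here because $L$ is genuinely metabelian, so freeness applies) and the combinatorial check that the distinguished terms are pairwise distinct and disjoint from the positive terms. Granting these, independence holds over an arbitrary coefficient ring $R$, since $W$ is a free $R$-module on the elements $(\text{monomial in the }t_i)\,e_i$.
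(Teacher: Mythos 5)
The paper does not prove this theorem at all: it states it as a known result, imported from Bokut' and Shmel'kin's work on bases of free polynilpotent Lie algebras, so any comparison is with those external references rather than with an argument in the text. Your self-contained proof is correct. The spanning half is the standard straightening: the Jacobi identity together with $[M',M']=0$ lets you permute the tail letters in positions $\geqslant 3$, and the identity $[[x_a,x_b],x_c]=[[x_a,x_c],x_b]-[[x_b,x_c],x_a]$ drives the second letter down to the global minimum (in fact one application already achieves this once the tail is pre-sorted, since the resulting seeds $[x_{i_1},x_{i_3}]$ and $[x_{i_2},x_{i_3}]$ both have second index $i_3=\min$). The independence half is a Magnus-embedding-type argument: the algebra $L=A\ltimes W$ is genuinely metabelian, $\psi$ exists by freeness, and your computation $\psi([x_{i_1},\dots,x_{i_m}])=\pm t_{i_3}\cdots t_{i_m}(t_{i_1}e_{i_2}-t_{i_2}e_{i_1})$ is right; the "distinguished term'' $t_{i_2}\cdots t_{i_m}e_{i_1}$ is correctly characterized by the property that its component index strictly exceeds the least index of its $S$-monomial, which separates it from all the other terms and recovers the monomial uniquely, so the images are independent over any commutative ring $R$. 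The only point you leave implicit is the degree-one part: the generators themselves are handled by projecting onto the $A$-summand (or by homogeneity), which is immediate. What your route buys over the paper's citation is a short, elementary, characteristic-free proof specific to the metabelian case; what the cited results buy is the statement for all free polynilpotent algebras at once.
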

  Let us denote this basis by
  $\mathfrak{B}(X)$. Since it consists of left-normed Lie monomials only, in this paper, we omit
  all Lie brackets except the outer pair, i.e. we write
  $[x_{i_1},x_{i_2},\dots, x_{i_m}]$ instead of
  $[\dots[x_{i_1},x_{i_2}],\dots, x_{i_m}]$.

  By definition, we suppose
  $\ell([x_{i_1},x_{i_2},\dots,x_{i_r}])=r$ for any left-normed Lie
  monomial and say that
  $r$ is the \emph{length} of the monomial
  $[x_{i_1},x_{i_2},\dots,x_{i_r}]$.

  \begin{ddd} Let
    $u$ be a Lie monomial. The \emph{multidegree} of
    $u$ is the vector
    $\overline{\delta}=(\delta_1,\delta_2,\dots,\delta_n)$, where
    $\delta_i$ is the number of occurrences of
    $x_i$ in
    $u$.
  \end{ddd}
  Let us denote by
  $\mdeg (u)$ the multidegree of
  $u$. If all monomials of a Lie polynomial
  $g$ have the same multidegree
  $\overline{\delta}$ then we call such polynomial
  \emph{homogeneous} and write
  $\mdeg (g)=\overline{\delta}$.

  Note that we can define the sum of multidegrees as the sum of
  the corresponding vectors. We also use the notation
  $\mdeg_i (u)$ for the number of
  $x_i$ in
  $u$, i.e. for the
  $i$th coordinate of
  $\mdeg(u)$.

  Let us define an order on
  $\mathbb{Z}^n$, namely for arbitrary vectors
  $\overline{\delta}=(\delta_1,\dots,\delta_n)$ and
  $\overline{\gamma}=(\gamma_1,\dots,\gamma_n)$, let us set
  $\overline{\delta}> \overline{\gamma}$ if there is
  $k$ such that
  $\delta_k>\gamma_k$ for some
  $k$ but
  $\delta_i=\gamma_i$ for
  $i>k$. We use this order to compare multidegrees.
  Define an order on
  $\mathfrak{B}(X)$. Let
  $u,v \in \mathfrak{B}(X)$. We say that
  $u$ is \emph{greater} than
  $v$, if one of the following conditions holds:
  \begin{enumerate}
    \item
      $\mdeg(u)>\mdeg(v)$;
    \item
      $\mdeg (u)=\mdeg(v)$ and
      $u$ is greater  than
      $v$ lexicographically.
  \end{enumerate}
  We call this order by
  \emph{the standard order} or just \emph{the order} and denote it by
  ``$\geqslant$''. Let
  $f$ be an arbitrary Lie polynomial. Denote by
  $\overline{f}$ the largest monomial (in the sense of the standard order) appearing in
  $f$ without a coefficient by it. So,
  $f=\alpha \overline{f}+\sum_i \alpha_i u_i$,  where
  $\alpha,\alpha_i \neq 0$ and
  $\overline{f}> u_i$ for all
  $u_i$.

  Let
  $v=[x_{i_1},\dots,x_{i_k}]$. It is easy to see that any permutation of the letters
  $x_{i_3},\dots x_{i_k}$ gives us a monomial equal to
  $v$ in
  $M(X)$. Indeed,
  \begin{equation*}
    \begin{split}
      [x_{i_1},& x_{i_2}, \dots, x_{i_{s-1}},x_{i_{s}},x_{i_{s+1}}]\\
        & =[[x_{i_1},x_{i_2},\dots,x_{i_{s-1}}],[x_{i_{s}},x_{i_{s+1}}]]
          +[x_{i_1},x_{i_2},\dots,x_{i_{s-1}},x_{i_{s+1}},x_{i_{s}}]\\
        & =0+[x_{i_1},x_{i_2},\dots,x_{i_{s-1}},x_{i_{s+1}},x_{i_{s}}].
     \end{split}
  \end{equation*}
  Consequently,
  \begin{equation*}
    [x_{i_1}, x_{i_2}, \dots, x_{i_{s-1}},x_{i_{s}},x_{i_{s+1}},\dots, x_{i_k}]
       = [x_{i_1}, x_{i_2}, \dots, x_{i_{s-1}},x_{i_{s+1}},x_{i_{s}},\dots, x_{i_k}].
  \end{equation*}
  So, interchanging letters
  $x_{i_s}$ and
  $x_{i_{s+1}}$
  ($s\geqslant 3$) in
  $v$ gives us an element equal to
  $v$ in
  $M(X)$. We are left to note that the set of all transpositions permuting two neighbor elements
  generates the symmetric group on the set of these elements.
  Therefore,  we can obtain any permutation of the letters
  $x_{i_3},\dots, x_{i_k}$ in
  $v$ interchanging pairs of neighbor elements.
  \begin{equation}\label{permut}
     [x_{i_1},x_{i_2},x_{i_3},\dots x_{i_k}]=[x_{i_1},x_{i_2},x_{\sigma(i_3)},\dots, x_{\sigma(i_k)}],
  \end{equation}
  where
  $\sigma$ is a permutation on
  $\{i_3,i_4,\dots , i_k\}$.

  Let
  $R[X]$ be the set of all commutative associative polynomials over
  $R$. It follows from the last paragraph that the derived subalgebra
  $M'(X)$ of
  $M(X)$ is an
  $R[X]$-module with respect to the adjoint representation. Denote by
  $u.f$ the element of
  $M'(X)$ obtained by acting the element
  $f\in R[X]$ on
  $u\in M'(X)$. Namely, let us define
  $u.f$ inductively:
  \begin{enumerate}
    \item
      $u.y=[u,y]$ for any
      $y\in X$;
    \item
      Let
      $f=y_1y_2\dots y_m$ for
      $m\geqslant 2$ and let
      $f_0=y_1y_2\dots y_{m-1}$ then
      $u.f=(u.f_0).y_m$;
    \item
      Finally, if
      $f=g+s$, where
      $s$ is a commutative associative monomial then
      $u.f=u.g+u.s$.
  \end{enumerate}

  For any monomial in
  $R[X]$ we also can define its multidegree as follows:
  $\mdeg(\alpha x_1^{\gamma_1}x_2^{\gamma_2}\dots x_n^{\gamma_n})=(\gamma_1,\gamma_2,\dots,\gamma_n)$,
  if
  $\alpha \neq 0$.

  Let us remind the following well-known result. Let
  $g$ be a non-associative polynomial in
  $X$ and let
  $g=\sum_{\overline{\delta}}g_{\overline{\delta}}$, where
  $g_{\overline{\delta}}$ is a homogeneous polynomial such that
  $\mdeg(g_{\overline{\delta}})=\overline{\delta}$. If
  $g=0$ in
  $M(X;G)$ then
  $g_{\overline\delta}=0$ in this algebra for all
  $\overline{\delta}$. It follows from homogeneity of Lie algebras, metabelian
  algebra identities, and partial commutativity relations. Indeed, if
  $g=0$ then this polynomial can be nullified by using a finite set of identities
  and relations indicated above. Each step moves a homogeneous polynomial
  to homogeneous one of the same multidegree. Therefore, summands
  of different homogeneous components cannot cancel with each
  other.

  Then, we introduce some homomorphisms on
  $M(X;G)$. First, let us remind a well-known fact.
  Let
  $X=\{x_1,\dots,x_n\}$ be the set of generators of a Lie algebra
  $\mathcal{L}_1$. Consider a map
  $\varphi: X\rightarrow \mathcal{L}_2$, where
  $\mathcal{L}_2$ is also a Lie algebra. If all identities and relations of
  $\mathcal{L}_1$ hold under
  $\varphi$ then this map can be extended to a homomorphism  from
  $\mathcal{L}_1$ to
  $\mathcal{L}_2$ uniquely.
  So, all maps defined below are homomorphisms.

  Let
  $X$ be the set of generators of the partially commutative metabelian Lie algebra
  $M(X;G)$. For any non-empty subset
  $Y$ of
  $X$, denote by
  $G_Y$  the subgraph of
  $G$ generated by
  $Y$.
  \begin{ddd}
    The \emph{projection} from
    $M(X;G)$ onto
    $M(Y;G_Y)$ is a homomorphism
    $\pi_{Y}: M(X;G) \rightarrow M(Y;G_Y)$ defined on
    $X$ as follows:
    \begin{equation}
      \pi_{Y}(x_i)=
      \begin{cases}
        x_i, & \text{if } x_i\in Y\\
        0,   & \text{if } x_i\not \in Y.
      \end{cases}
    \end{equation}
  \end{ddd}
  \begin{ddd}
    Let
    $\overline{G}$ be obtained from
    $G$ by adding some edges. An \emph{identical simplification}
    from
    $M(X;G)$ onto
    $M(X;\overline{G})$ is the homomorphism
    $\varphi: M(X;G) \rightarrow M(X;\overline{G})$ defined on the
    set of generators identically (i.e.
    $\varphi(x_i)=x_i$ for
    $x_i\in X$).
  \end{ddd}

  Let
  $G$ be an arbitrary graph,
  $G_1,G_2,\dots ,G_r$ its connected components, and
  $X_i$ the set of the vertices of
  $G_i$ for
  $i=1,2,\dots r$. Given
  $t$ such that
  $0\leqslant t<r$, define the sets
  $X'=\bigcup_{i=1}^t X_i$,
  $X''=\bigcup_{j=t+1}^r X_j$. Consider a graph
  $\widetilde{G}$ obtained from the subgraph
  $G_{X'}$ adding some set
  $Z$ of isolated vertices
  ($Z$ may be empty). By
  $Y$ denote the set of vertices of
  $\widetilde{G}$. In other words,
  $Y=X'\cup Z$.

  \begin{ddd} \label{otozhd}
    An \emph{identification on connected components}
    $G_{t+1},\dots G_{r}$ is a homomorphism
    $\varphi:M(X;G)\rightarrow M(Y;\widetilde{G})$, acting on
    $X'$ identically and mapping all vertices in
    $X_k\subseteq X''$
    ($k=t+1,\dots , r$) to multiples of a fixed element
    $y_k\in Y$ (i.e. to elements of the form
    $\alpha y_k$, where
    $\alpha\in R$ can be different for different vertices in
    $X_k$). If
    $X'=\varnothing$ (this means
    $t=0$) then the identification of connected components is called  \emph{complete}.
  \end{ddd}
  Definition~%
\ref{otozhd} implies that if an identification of connected
  components is complete then
  $\widetilde{G}$ is totally disconnected, therefore
  $M(Y;\widetilde{G})=M(Y)$.

  Finally, let us recall some terminology related to universal theories of algebraic systems.
  \begin{ddd}
    An
    $\exists$-\emph{sentence} is a formula without free variables such that it is of the form
    $$\exists w_1\dots w_m \Phi(w_1,\dots,w_m),$$
    where
    $\Phi(w_1,\dots,w_m)$ is a formula of predicate calculus in
    the corresponding algebraic system such that this formula does not contain quantifiers.
  \end{ddd}

  \begin{ddd}
    The set of all
    $\exists$-sentences that are true in a Lie algebra
    $L$ is called the \emph{existential theory} or the
    $\exists$-\emph{theory} of this Lie algebra.
  \end{ddd}
  \begin{ddd}
    Lie algebras are called \emph{existentially equivalent}
    if their existential theories coincide.
  \end{ddd}
  The notion of \emph{universal theory}(or
  $\forall$-\emph{theory}) of a Lie algebra is defined analogously as well as
  the notion of universal equivalence of Lie algebras.

  It is easy to see that Lie algebras
  $L_1$ and
  $L_2$ are existentially equivalent if and only if these Lie algebras are
  universally equivalent.

  The procedure of exchanging functional symbols by predicate ones is well-known in model theory.
  Any set with all predicates induced on it is a submodel.

  Let us formulate a well-known result in model theory.
  \begin{ttt}\label{univeq}
    Arbitrary algebraic systems (ex., Lie algebras)
    $L_1$ and
    $L_2$ are universally equivalent if and only if each finite model of the first algebraic system
    is isomorphic to a finite model of the second one.
  \end{ttt}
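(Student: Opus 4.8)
The plan is to work in the purely relational signature obtained from the predicatization mentioned just above the statement, so that every subset of the underlying set of a Lie algebra is automatically a submodel with the induced predicates, and a \emph{finite model} is precisely a finite subset carrying these induced relations. Two facts will carry the argument. First, the equivalence already recorded in the excerpt between universal and existential equivalence, so that it suffices to show the $\exists$-theories of $L_1$ and $L_2$ coincide. Second, the absoluteness of quantifier-free formulas under submodels: for a submodel $A$ of $L$, a tuple $\bar a$ from $A$, and any quantifier-free $\Phi$, one has $A\models\Phi(\bar a)$ iff $L\models\Phi(\bar a)$; in a relational signature this is immediate, since atomic formulas are evaluated by the induced predicates and truth of Boolean combinations is preserved. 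I also read the stated condition symmetrically: each finite submodel of $L_1$ is isomorphic to a finite submodel of $L_2$ and vice versa.

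For the easy direction ($\Leftarrow$), assume the finite-submodel condition and let $\sigma=\exists w_1\dots w_m\,\Phi(w_1,\dots,w_m)$ be an $\exists$-sentence true in $L_1$, with $\Phi$ quantifier-free. I would choose witnesses $a_1,\dots,a_m\in L_1$ with $L_1\models\Phi(\bar a)$ and let $A$ be the finite submodel on $\{a_1,\dots,a_m\}$; by absoluteness $A\models\Phi(\bar a)$. The hypothesis supplies an isomorphism of $A$ onto a finite submodel $B$ of $L_2$, and transporting the witnesses along it yields $b_1,\dots,b_m\in B\subseteq L_2$ with $B\models\Phi(\bar b)$. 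Absoluteness upward gives $L_2\models\Phi(\bar b)$, hence $L_2\models\sigma$. By the symmetric hypothesis the two $\exists$-theories coincide, so $L_1$ and $L_2$ are universally equivalent.

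For the converse ($\Rightarrow$), assume universal (equivalently existential) equivalence and let $A$ be a finite submodel of $L_1$ with universe $\{a_1,\dots,a_m\}$. I would form the \emph{atomic diagram} of $A$: the conjunction $\Phi(w_1,\dots,w_m)$ of all atomic and negated atomic formulas in the $w_i$ that hold of $\bar a$, together with the recorded (in)equalities $w_i=w_j$ or $w_i\neq w_j$ among the variables. Then $\sigma=\exists\bar w\,\Phi$ is an $\exists$-sentence true in $L_1$, so by hypothesis it is true in $L_2$; choosing witnesses $\bar b$ in $L_2$, the map $a_i\mapsto b_i$ is well defined and injective because the recorded (in)equalities force this, and it is an isomorphism of $A$ onto the finite submodel of $L_2$ on $\{b_1,\dots,b_m\}$ because $\Phi$ fixes the truth value of every atomic relation on the witnesses. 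Symmetrically every finite submodel of $L_2$ embeds isomorphically into $L_1$, which is the asserted condition.

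The main obstacle lies in this converse direction: replacing the finite submodel $A$ by a single $\exists$-sentence forces the atomic diagram to be a \emph{finite} formula, which is exactly where finiteness of the signature enters. I would therefore make explicit that the relational signature produced by the predicatization is finite, so that for a fixed tuple length only finitely many atomic formulas occur; otherwise (for instance with infinitely many scalar operations) the diagram would be an infinite conjunction and the single sentence $\sigma$ would not exist. Equally important is to record \emph{both} positive and negative atomic facts, and to treat the equality predicate on the same footing, so that $a_i\mapsto b_i$ is a genuine isomorphism rather than a merely relation-preserving map and so that coincidences among the $a_i$ are faithfully reproduced among the $b_i$.
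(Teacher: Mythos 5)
The paper states this as a well-known fact of model theory and offers no proof at all, so there is no argument of the authors' to compare yours against; judged on its own, your proof is correct and is the standard one. The easy direction correctly uses absoluteness of quantifier-free formulas for submodels in a purely relational signature, and the converse correctly packages the atomic diagram of a finite submodel into a single existential sentence and transports witnesses. You are also right to single out the two points where the literal statement needs care: the condition must be read symmetrically (one-sided embeddability of finite submodels yields only one inclusion of the existential theories), and the predicatized signature must be finite for the atomic diagram to be a finite conjunction --- a concern that is moot in the paper's application, since Theorem~\ref{main} is proved for Lie rings over $\mathbb{Z}$, where scalar multiplication is generated by addition and the signature reduces to finitely many relations.
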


  \section{Bases of partially commutative metabelian algebras}\label{pcalgbas}

   Given
   $u=[x_{i_1},x_{i_2},\dots,x_{i_k}]$, let
   $X_u$ be the set of all generators appearing in
   $u$. Let's denote by
   $G_u$ the subgraph of
   $G$, generated by the set
   $X_u$. Similarly, we define
   $X_g$ and
   $G_g$ for a homogeneous Lie polynomial
   $g$, i.e. suppose that
   $X_g=X_u$ and
   $G_g=G_u$, where
   $u$ is a monomial of
   $g$.

   \begin{llll}\label{equality}
     Suppose
     $u=[x_{i_1},x_{i_2},\dots, x_{i_k}]$,
     $x_{i_1}$ and
     $x_{i_j}$ are vertices belonging to the same connected component of
     $G_u$, and
     $u'$ is obtained from
     $u$ by interchanging
     $x_{i_1}$ and
     $x_{i_j}$. Then
     $u=u'$ in
     $M(X;G)$.
   \end{llll}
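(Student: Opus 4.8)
The plan is to reduce everything to two elementary moves on left-normed monomials and then transport the first letter along a path. The first move is already available: by~\eqref{permut} the letters in positions $3,\dots,k$ may be permuted arbitrarily without changing the monomial. The second move is an adjacent swap involving the first position: I claim that if $[x_a,x_b]=0$ and $x_a$ occupies position $1$ of a monomial $w$ while $x_b$ occupies some other position, then interchanging $x_a$ and $x_b$ leaves $w$ unchanged. If $x_b$ is in position $2$ this is immediate, since then $w=[[x_a,x_b],\dots]=0$ and the swapped monomial equals $[[x_b,x_a],\dots]=0$ as well. If $x_b$ is in a position $\geqslant 3$, I first use the first move to bring it to position $3$; a short computation with the Jacobi identity and the metabelian law gives $[x_a,x_c,x_b]=[x_b,x_c,x_a]$ (the mixed term $[[x_b,x_a],x_c]$ vanishing because $[x_a,x_b]=0$), and acting by the remaining letters as a commutative monomial on $M'(X)$ transports this equality to $w$.

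Next I would fix a simple path $x_{i_1}=v_0,v_1,\dots,v_l=x_{i_j}$ inside the connected component of $G_u$ containing $x_{i_1}$ and $x_{i_j}$; every $v_t$ lies in $X_u$ and hence occurs in $u$. The idea is to \emph{walk} the first letter along this path, repeatedly applying the adjacent swap to interchange the current first letter $v_{t-1}$ with $v_t$. Each $v_t$ is adjacent to $v_{t-1}$ and, provided it does not coincide with the letter sitting in position $2$, occurs in some position $\geqslant 3$, so the move applies. Tracking the effect on the triple (letter in position $1$; letter in position $2$; multiset of letters in positions $\geqslant 3$), one checks that after the $l$ swaps the first letter has become $x_{i_j}$, the second letter is unchanged, and in the multiset exactly one $x_{i_j}$ has been replaced by one $x_{i_1}$ — which is precisely the configuration of $u'$. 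Hence $u=u'$ in this generic situation.

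The one configuration this walk does not cover directly is when the letter $x_{i_2}$ in position $2$ lies on the chosen path (in particular the case $j=2$). Here I would invoke an auxiliary vanishing statement: \emph{if the letters in positions $1$ and $2$ of a monomial $w$ lie in the same connected component of $G_w$, then $w=0$.} This is proved by the same walk stopped one step early along a simple path $p=v_0,\dots,v_m=q$ joining the two letters: if $m=0$ then $p=q$ and $w=[[p,p],\dots]=0$, while if $m\geqslant 1$ one brings $v_{m-1}$ into position $1$ (all of $v_0,\dots,v_{m-1}$ differ from $q$, hence remain available off position $2$), reaching a monomial whose first two letters are the adjacent pair $v_{m-1},v_m$, so that it vanishes. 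Applying this to $u$ (whose first two letters $x_{i_1},x_{i_2}$ are joined by a sub-path) and to $u'$ (whose first two letters $x_{i_j},x_{i_2}$ are joined within $G_{u'}=G_u$) gives $u=0=u'$.

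The main obstacle is the bookkeeping in the second paragraph: one must verify that at the instant each swap is performed the required letter $v_t$ is genuinely present in a position $\geqslant 3$ — it is, because the swaps executed so far only remove $v_1,\dots,v_{t-1}$ and reinsert $v_0,\dots,v_{t-2}$, none of which equals $v_t$ — and that coincidences among the $v_t$, the position-$2$ letter, and repeated generators do not corrupt the net multiset count. The dichotomy ``$x_{i_2}$ on the path versus off the path,'' together with the vanishing statement, is exactly what removes the sign obstruction that an honest transposition of positions $1$ and $2$ would otherwise create.
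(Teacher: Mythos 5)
Your proof is correct, and its engine is the same as the paper's: both transport the first letter along a simple path in $G_u$ using the swap $[x_a,x_c,x_b]=[x_b,x_c,x_a]$ (valid when $[x_a,x_b]=0$, by the Jacobi identity and the metabelian law) together with the free permutability (\ref{permut}) of the letters in positions $\geqslant 3$. Where you genuinely diverge is in the organization and in the degenerate case. The paper argues by induction on the number of intermediate vertices: it swaps $x_{i_1}$ with $y_1$, then $y_1$ with $x_{i_j}$ by the inductive hypothesis, and must then swap $x_{i_1}$ and $y_1$ back, both now sitting in positions $\geqslant 2$; when one of them occupies position $2$ this is handled by an anticommutativity sandwich (swap the first two letters, apply the inductive hypothesis, swap back, the two signs cancelling). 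Your forward-only walk never has to undo a move, so the only obstruction is that some path vertex may coincide with $x_{i_2}$ and be unavailable off position $2$; you dispose of that case by proving directly that a monomial whose first two letters lie in the same connected component of $G_w$ vanishes, whence $u=0=u'$. That auxiliary statement is precisely the forward direction of the paper's Lemma~\ref{zero}, which the paper later derives \emph{from} the present lemma; your independent proof of it by the truncated walk keeps the logic non-circular and makes the $j=2$ and ``$x_{i_2}$ on the path'' cases uniform. Your bookkeeping is sound: at step $t$ the vertex $v_t$ differs from $v_{t-1}$ (in position $1$) and, in the generic case, from $x_{i_2}$ (in position $2$), hence some occurrence of it sits in a position $\geqslant 3$, and the net effect of the $l$ swaps on the multiset is the replacement of one $x_{i_j}$ by one $x_{i_1}$, which is $u'$ up to (\ref{permut}). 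What each approach buys: the paper's induction is shorter to state but hides the sign issue in the back-swap; your version trades that for an extra (independently useful) vanishing lemma and a cleaner invariant.
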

   \begin{proof}
     Since
     $x_{i_1}$ and
     $x_{i_j}$ are in the same connected component of
     $G_u$, there is a simple path
     $(x_{i_1},y_1,y_2,\dots,y_s,x_{i_j})$ in this graph, i.e. a path consisting of
     elements of
     $X_u$ all vertices of which are different. Let us proceed by induction on the number
     of intermediate vertices on such path (i.e. by induction on
     $s$).

     If
     $s=0$ then
     $x_{i_1}$ and
     $x_{i_j}$ are adjacent. Consider two cases.
     For
     $j=2$ we have
     $[x_{i_1},x_{i_2},\dots, x_{i_k}]=[0,x_{i_3},\dots,x_{i_k}]=0$
     and, analogously,
     $[x_{i_2},x_{i_1},\dots, x_{i_k}]=[0,x_{i_3},\dots,x_{i_k}]=0$.

     Let
     $j\geqslant 3$. By
(\ref{permut}), without loss of generality it can be assumed that
     $j=3$. Since
     $[x_{i_1},x_{i_3}]=0$ we obtain
     \begin{equation*}
       \begin{split}
         [x_{i_1},x_{i_2},x_{i_3},x_{i_4},\dots, x_{i_k}] =&
          [x_{i_1},x_{i_3},x_{i_2},x_{i_4},\dots, x_{i_k}]+[\dots [x_{i_1},[x_{i_2},x_{i_3}]],\dots, x_{i_k}] \\
           =&-[x_{i_2},x_{i_3},x_{i_1},x_{i_4},\dots,x_{i_k}]\\
           =&[x_{i_3},x_{i_2},x_{i_1},x_{i_4},\dots,x_{i_k}].
       \end{split}
     \end{equation*}

     Next, let us suppose that the statement holds for all
     $l<s$. Consider the path
     $(x_{i_1},y_1,\dots,y_s,x_{i_j})$ from
     $x_{i_1}$ to
     $x_{i_j}$ in
     $G_u$. As it was shown above, the monomial
     $u_1$ obtained by interchanging
     $x_{i_1}$ and
     $y_1$ in
     $u$ is equal to
     $u$ in
     $M(X;G)$. Therefore,
      $$[x_{i_1},x_{i_2},\dots, y_1,\dots, x_{i_k}]=[y_1,x_{i_2},\dots, x_{i_1},\dots, x_{i_k}]$$
     in
     $M(X;G)$. We have
     $(y_1,\dots,y_s,x_{i_j})$ contains
     $s-1$ intermediate vertices. So, by inductive assumption,
     $u_2=u_1$, in
     $M(X;G)$ where
     $u_2$ is the monomial obtained by interchanging
     $y_1$ and
     $x_{i_j}$ in
     $u_1$.

     Finally, to obtain the monomial
     $u'$ we are left to interchange
     $x_{i_1}$ and
     $y_1$ in
     $u_2$. If either
     $y_1$ or
     $x_{i_1}$ is the second letter in
     $u_2$ then one can interchange the first two letters by the anticommutativity
     identity, after that interchange
     $y_1$ and
     $x_{i_1}$ by the induction hypothesis, and again interchange the first two letters.
     If neither
     $y_1$ nor
     $x_{i_1}$ is the second letter of
     $u_2$ then
     $u_2=u'$ in
     $M(X;G)$ by
(\ref{permut}). Therefore,
     $u=u'$ in
     $M(X;G)$.
   \end{proof}
   \begin{llll}\label{zero}
     Let
     $u=[x_{i_1},\dots,x_{i_k}]$ be a monomial of the form
(\ref{basis}). Then
     $u$ is equal to zero in
     $M(X;G)$ if and only if the vertices
     $x_{i_1}$ and
     $x_{i_2}$ are in the same connected component of
     $G_u$.
   \end{llll}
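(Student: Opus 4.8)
The statement is an equivalence, so the plan is to treat the two implications separately. For the ``if'' direction I would show that $u$ vanishes as soon as $x_{i_1}$ and $x_{i_2}$ share a component of $G_u$, and for the ``only if'' direction I would argue the contrapositive, namely that $u$ is nonzero whenever $x_{i_1}$ and $x_{i_2}$ lie in distinct components of $G_u$. The nonvanishing implication is the one I expect to require the most care.

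For the ``if'' direction, suppose $x_{i_1}$ and $x_{i_2}$ belong to the same connected component $C$ of $G_u$. Since $C$ contains the two distinct vertices $x_{i_1}$ and $x_{i_2}$ it is nontrivial, so $x_{i_2}$ has a neighbour $w$ in $G_u$, and $w$ again lies in $C$, hence in the same component as $x_{i_1}$. If $w=x_{i_1}$, then $\{x_{i_1},x_{i_2}\}$ is an edge of $G$, so $[x_{i_1},x_{i_2}]=0$ and therefore $u=0$. Otherwise $w$ differs from both $x_{i_1}$ and $x_{i_2}$, so some occurrence of $w$ sits in a position $\ge 3$; applying Lemma~\ref{equality} to interchange $x_{i_1}$ with that occurrence leaves the second letter $x_{i_2}$ in place and gives $u=[w,x_{i_2},\dots]$ in $M(X;G)$. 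As $\{w,x_{i_2}\}$ is an edge, the innermost bracket $[w,x_{i_2}]$ is zero, whence $u=0$. (One could instead pass through a path from $x_{i_1}$ to $x_{i_2}$ and use its last edge, but a single neighbour of $x_{i_2}$ already suffices.)

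For the ``only if'' direction I plan to exhibit a homomorphism that does not kill $u$. First apply the projection $\pi_{X_u}\colon M(X;G)\to M(X_u;G_u)$; since every letter of $u$ lies in $X_u$ we have $\pi_{X_u}(u)=u$, so it suffices to prove $u\ne 0$ in $M(X_u;G_u)$. Let $C_1$ be the component of $G_u$ containing $x_{i_1}$. I would then define a map $\theta$ on the generators of $M(X_u;G_u)$ sending every vertex of $C_1$ to a generator $a$ and every remaining vertex to a second generator $b$, with target the free metabelian Lie algebra $M(\{a,b\})$. This map respects the defining relations, because any edge of $G_u$ joins two vertices of one component, which receive the same letter, so the corresponding bracket becomes $[a,a]=0$ or $[b,b]=0$; by the extension principle recalled in the preliminaries, $\theta$ extends to a homomorphism. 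Since $x_{i_1}\in C_1$ while $x_{i_2}\notin C_1$, we get $\theta(u)=[a,b,c_3,\dots,c_k]$ with each $c_t\in\{a,b\}$. Sorting the letters in positions $\ge 3$ by means of (\ref{permut}) and fixing the order $b<a$, this becomes a monomial of the form (\ref{basis}): its second letter $b$ is strictly smaller than the first letter $a$ and does not exceed any later letter. By Theorem~\ref{freemetabbas} it is a basis element of $M(\{a,b\})$, hence nonzero, and therefore $u\ne 0$ in $M(X_u;G_u)$, and so in $M(X;G)$.

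The main obstacle is the nonvanishing direction: one must ensure that reducing $\theta(u)$ to the free metabelian basis produces no cancellation. The purpose of collapsing each component to a single generator and then to only the two letters $a,b$ is precisely to arrange that $\theta(u)$ reduces, with no sign and no cancellation, to a single monomial of the form (\ref{basis}), the order $b<a$ guaranteeing that the smallest letter ends up in the second position. I would double-check the boundary cases (for instance when the only occurrence of $b$ is the second letter, or when several of the $c_t$ equal $a$) and confirm that in each case the sorted monomial still satisfies the inequalities in (\ref{basis}), so that Theorem~\ref{freemetabbas} indeed certifies nonvanishing.
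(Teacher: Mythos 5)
Your proof is correct and follows essentially the same route as the paper: the ``if'' direction uses Lemma~\ref{equality} to bring a neighbour of $x_{i_2}$ into the first position so that the inner bracket vanishes, and the ``only if'' direction composes the projection $\pi_{X_u}$ with a collapse of connected components onto a free metabelian algebra, where (\ref{permut}) and Theorem~\ref{freemetabbas} exhibit the image as a single nonzero basis monomial. The only cosmetic difference is that you collapse onto two generators $a,b$ rather than keeping one representative per component as the paper does; since only one monomial is involved, both choices work equally well.
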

   \begin{proof}
     If
     $x_{i_1}$ and
     $x_{i_2}$ are in the same connected component of
     $G_u$, then the statement follows from Lemma~%
\ref{equality}.

     Conversely, let
     $x_{i_1}$ and
     $x_{i_2}$ be in different connected components of
     $G_{u}$. We need to show that
     $u\neq 0$  in this case.

     Let
     $\pi_{X_u}$ be the projection of the algebra
     $M(X;G)$ onto the algebra
     $M(X_u,G_u)$. Since
     $\pi_{X_u}$ acts identically on the set
     $X_u$ we have
     $\pi_{X_u}(u)=u$ .

     Consider a set
     $Y$ such that it contains by one element from each connected component of
     $G_u$ and both
     $x_{i_1}$ and
     $x_{i_2}$ are in
     $Y$.

     Define the complete identification of connected components
     $\psi: M(X_u;G_u) \rightarrow M(Y)$ mapping each element in
     $X_u$ to the element in
     $Y$ from the same connected component of
     $G_u$.

     Applying
     $\psi$ to the monomial
     $u$ in
     $M(X_u;G_u)$, we obtain
     $$0=\psi([x_{i_1},x_{i_2},\dots, x_{i_k}])=[x_{i_1},x_{i_2},\psi(x_{i_3}),\dots,\psi(x_{i_k})].$$
     Since
     $x_{i_2}$ is the smallest letter of
     $u$ it is also the smallest letter in
     $\psi(u)=[x_{i_1},x_{i_2},\psi(x_{i_3}),\dots,\psi(x_{i_k})]$.
     By
(\ref{permut}), one can permute the letters
     $\psi(x_{i_3}),\dots, \psi(x_{i_k})$ in
     $\psi(u)$ in such a way that they are in the non-descending order in the obtained monomial.
     The obtained monomial is of the form
(\ref{basis}) and it is equal to
     $\psi(u)$ in
     $M(Y)$. Therefore, it cannot be equal to
     $0$ in this algebra. Consequently,
     $u\neq 0$ in
     $M(X;G)$ either.
   \end{proof}

   Let us consider the algebra
   $M(X;G)$. By
   $B'_{\overline{\delta}}(X)$ denote the set of all monomials of the form
(\ref{basis}) whose multidegrees are  equal to
   $\overline{\delta}$. Let us eliminate from
   $B'_{\overline{\delta}}(X)$ all monomials
   $u$ such that
   $u=0$ in
   $M(X;G)$ (by Lemma~%
\ref{zero}, we should exclude all monomials
   $u=[x_{i_1},x_{i_2},\dots, x_{i_k}]$ such that
   $x_{i_1}$ and
   $x_{i_2}$ are in the same connected component of the graph
   $G_u$). By
   $B''_{\overline{\delta}}(X;G)$ denote the obtained set.

   Given monomials
   $u_1=[x_{i_1},x_{i_2},\dots x_{i_k}]$  and
   $u_2=[x_{j_1},x_{j_2},\dots x_{j_k}]$ in
   $B''_{\overline{\delta}}(X;G)$ we write
   $u_1 \sim u_2$ if
   $x_{i_1}$ and
   $x_{j_1}$ are in the same connected component of
   $G_{u_1}=G_{u_2}$.
    By Lemma~%
\ref{equality}, it follows that if
   $u_1 \sim u_2$ then
   $u_1=u_2$  in
   $M(X;G)$. Besides, it follows from
(\ref{basis}) that
   $x_{i_2}=x_{j_2}$. The relation
   ``$\sim$'' is obviously  an equivalence relation. Consequently,
   there is the decomposition of
   $B''_{\overline{\delta}}(X;G)$ by the equivalence classes. Let us choose
   by one element from each equivalent class in such a way that the first letter
   of each chosen element is the the largest one among the first letters of all
   monomials in this equivalence class. Denote this set by
   $B_{\overline{\delta}}(X;G)$ (in particular, this set can be empty for
   some multidegrees).

   Let us set
   $$\mathfrak{B}(X;G)=\bigcup_{\overline{\delta}}B_{\overline{\delta}}(X;G),$$
   where the union is taken by all multidegrees.

   The following theorem holds.

   \begin{ttt}\label{baspcom}
     The set
     $\mathfrak{B}(X;G)$ is a basis of the partially commutative metabelian
     Lie algebra
     $M(X;G)$.
   \end{ttt}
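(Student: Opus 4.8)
The plan is to verify the two defining properties of a basis separately. Spanning is immediate from the preparatory work: by Theorem~\ref{freemetabbas} the set $\mathfrak{B}(X)$ spans $M(X)$ over $R$, so its image spans $M(X;G)=M(X)/I$; Lemma~\ref{zero} identifies exactly which of these images vanish, and Lemma~\ref{equality} shows that two $\sim$-equivalent monomials have the same image. Discarding the zero images and retaining one representative per $\sim$-class therefore still spans, and this retained set is precisely $\mathfrak{B}(X;G)$. The generators $X$ form the total-degree-one part, where $I$ acts trivially because $I\subseteq M'(X)$, so they are handled at once. All the content lies in proving linear independence.

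For independence I would first reduce to a single multidegree: by the homogeneity observation of Section~\ref{prelim}, any relation $\sum\alpha_u u=0$ in $M(X;G)$ decomposes into homogeneous parts, so it suffices to work inside one component $M(X;G)_{\overline{\delta}}=M(X)_{\overline{\delta}}/I_{\overline{\delta}}$ with total degree at least two. Fix such a $\overline{\delta}$, let $x_p$ be the smallest letter occurring in it, and let $H=G_{\overline{\delta}}$ be the subgraph induced on the occurring letters. By the normal form (\ref{basis}), every basis monomial of multidegree $\overline{\delta}$ carries $x_p$ in position two and is thus determined by its first letter; write $u^{(q)}=[x_q,x_p,\dots]$ for the one with first letter $x_q$, one for each occurring $x_q>x_p$, and adopt the convention $u^{(p)}:=0$. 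These $u^{(q)}$ are the free $R$-basis of $M(X)_{\overline{\delta}}$.

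The crux is to describe $I_{\overline{\delta}}$ in these coordinates. Since $M'(X)$ is abelian, $I$ is merely the $R[X]$-submodule of $M'(X)$ generated by the brackets $[x_i,x_j]$ over edges, so $I_{\overline{\delta}}$ is spanned by the elements $[x_i,x_j].f_e$, one for each edge $e=\{x_i,x_j\}$ of $H$, where $f_e$ is the unique monomial of complementary multidegree. Rewriting each such generator in the basis is the main obstacle: applying the Jacobi identity $[a,b,c]=[a,c,b]+[a,[b,c]]$ to bring $x_p$ into position two, together with the free tail-permutation (\ref{permut}), one computes (taking $x_i<x_j$) that $[x_i,x_j].f_e=u^{(i)}-u^{(j)}$, the convention $u^{(p)}=0$ making this uniform whether or not $x_p$ is an endpoint of $e$. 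The care needed here is to check that the reduction to normal form yields exactly these ``edge-difference'' vectors and introduces no further terms; granting this, $I_{\overline{\delta}}$ is spanned by $\{u^{(i)}-u^{(j)}:\{x_i,x_j\}\in H\}$.

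With this description, independence follows by detecting coefficients. A lifted relation reads $\sum_{C}\alpha_C\,u^{(q_C)}\in I_{\overline{\delta}}$, the sum running over the components $C$ of $H$ other than that of $x_p$, with $q_C$ the chosen representative. For each such $C$, consider the $R$-linear functional $\Sigma_C$ on $M(X)_{\overline{\delta}}$ summing the coordinates indexed by the letters of $C$. Every generator $u^{(i)}-u^{(j)}$ is annihilated by $\Sigma_C$, since its two endpoints lie in a single component and neither equals $x_p$ when that component is $C$; hence $\Sigma_C$ kills $I_{\overline{\delta}}$, and evaluating it on the relation forces $\alpha_C=0$. This argument uses nothing about $R$ beyond commutativity. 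To finish I would match cardinalities: $B_{\overline{\delta}}(X;G)$ contains exactly one monomial per component of $H$ distinct from that of $x_p$, and we have shown these are independent and (from the spanning step) generate $M(X;G)_{\overline{\delta}}$, so they form a basis of each homogeneous component, and $\mathfrak{B}(X;G)$ is a basis of $M(X;G)$.
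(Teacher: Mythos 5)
Your proof is correct, but the independence argument takes a genuinely different route from the paper's. The paper, after the same spanning step, proves independence by composing two of the homomorphisms built in Sec.~\ref{prelim}: the projection $\pi_{X_g}$ followed by a complete identification of connected components $\psi:M(X_g;G_g)\to M(Y)$, which sends the distinct basis monomials of a homogeneous relation to distinct, nonzero basis monomials of a \emph{free} metabelian algebra $M(Y)$; independence is then inherited from Theorem~\ref{freemetabbas}. You instead compute the relation ideal directly: since $I$ is the $R[X]$-submodule of $M'(X)$ generated by the edge brackets, each multidegree component $I_{\overline{\delta}}$ is spanned by one element per edge of the induced subgraph $H$, and the normal-form reduction $[x_i,x_j].f_e=u^{(i)}-u^{(j)}$ (with $u^{(p)}:=0$ for the minimal letter $x_p$) exhibits $I_{\overline{\delta}}$ as the span of edge differences; the component-sum functionals $\Sigma_C$ then separate the chosen representatives. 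I checked the one step you flagged as delicate: for $x_p\in\{x_i,x_j\}$ the identity is immediate, and otherwise writing $f_e=x_pf'$ and using $[x_i,x_j,x_p]=[x_i,x_p,x_j]-[x_j,x_p,x_i]$ together with~(\ref{permut}) gives exactly the two normal-form terms and nothing else, so the edge-difference description is right. The trade-off: the paper's argument reuses its homomorphism machinery and Lemma~\ref{zero} and needs no computation inside $I$; yours is more self-contained at the level of each graded piece, essentially identifies $M(X;G)_{\overline{\delta}}$ with the cokernel of the incidence map of $H$ relative to the component of $x_p$ (so Lemmas~\ref{equality} and~\ref{zero} drop out as corollaries), and works verbatim over any commutative ring. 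Both arguments rely on Theorem~\ref{freemetabbas} to know that the monomials $u^{(q)}$ freely span the ambient graded piece of $M(X)$.
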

   \begin{proof}
     Since all elements of the same equivalence class are equal to each
     other in
     $M(X;G)$ and since
     $\mathfrak{B}(X;G)$ contains elements from all equivalence
     classes of the set
     $\bigcup_{\overline{\delta}} B''_{\overline{\delta}}(X;G)$
     the algebra
     $M(X;G)$ is spanned by
     $\mathfrak{B}(X;G)$.

     We are left to show that the elements of
     $\mathfrak{B}(X;G)$ are linearly independent. Let
     \begin{equation} \label{lincomb}
       g=\sum_{j}\alpha_j u_j,
     \end{equation}
     where
     $u_j=[x_{j_1},x_{j_2},\dots x_{j_{k_j}}]\in \mathfrak{B}(X;G)$,
     $\alpha_j \in R$. Suppose that
     $g=0$ in
     $M(X;G)$. Without loss of generality (see Sec.~%
\ref{prelim}) we may assume that
     $g$ is a homogeneous polynomial. In particular, all
     $k_j$ are equal to each other. For this reason, we write
     $k$ instead of
     $k_j$. Note that the second letter in all monomials
     $u_j$ is same, namely, it is the smallest letter appearing in
     $u_j$. Let us denote this letter by
     $x$.

     By construction of
     $B_{\overline{\delta}}(X;G)$, the first letters
     of different monomials
     $u_j$ in decomposition
(\ref{lincomb}) belong to different connected components of
     $G_{g}$. Consider the set of the first letters of the monomials
     $u_j$ appearing in
(\ref{lincomb}). Extend this set to the set
     $Y$ such that
     $Y$ contains by one letter from each connected component of
     $G_{g}$ and
     $x\in Y$. It can be done because the first letter of a non-zero monomial
     $u_j$ cannot belong to the same connected component as
     $x$. So,
     $x$ is the least element of
     $Y$.

     Consider the projection
     $\pi_{X_g}: M(X;G) \rightarrow  M(X_{g};G_{g})$.
     Since
     $\pi_{X_g}$ acts identically on
     $X_g$ we have
     $\pi_{X_g}(g)=g$.

     Let
     $\psi: M(X_{g};G_{g})\rightarrow M(Y)$ be the complete identification of
     connected components mapping each element of
     $X_{g}$ to the element of
     $Y$ belonging to the same connected component of
     $G_{g}$.

     Applying the map
     $\psi$ to
(\ref{lincomb}) we get
     \begin{equation*}
       \begin{split}
          0 & =\sum_{j} \alpha_j \psi(u_j) \\
            & = \sum_{j} \alpha_j [x_{j_1}, \psi(x),\psi (x_{j_3})\dots, \psi (x_{j_k})].
       \end{split}
     \end{equation*}
     It was shown above that
     $\psi (x_{j_s})\in Y$. If it is necessary we can permute the letters
     $\psi(x_{i_3}),\dots, \psi(x_{i_k})$ in each
     $\psi(u_j)$ in such a way that they are in the non-descending order in the obtained monomial.
     We obtain
     $$\sum_{j}\alpha_j [x_{j_1},x,y_{j_3},\dots, y_{j_k}]=0$$
     in the free metabelian algebra
     $M(Y)$. Since
     $x_{j_1}$ and
     $x$ are in different connected components,
     $\psi (u_j)\neq 0$ in
     $M(Y)$. By Theorem~%
\ref{freemetabbas}, the set of monomials
     $\psi(u_j)$ is linearly independent in
     $M(Y)$. Consequently,
     $\alpha_j=0$ for all
     $j$. We have proved that the monomials
     $u_j$ are linearly independent in
     $M(X_{g};G_{g})$ so are they in
     $M(X;G)$. This completes the proof.
   \end{proof}

   \section{Annihilators and centralizers}\label{ann&cent}
   Let us start this section with proving some auxiliary statements about free metabelian Lie algebras.
   We need them to describe annihilators and centralizers of partially commutative Lie algebras.

   \begin{llll}\label{monombychar}
     Suppose
     $u=[x_{j_1},x_{j_2},\dots,x_{j_m}]\in \mathfrak{B}(X)$, where
     $m\geqslant 2$. Let
     $v$ be the largest monomial in the representation of
     $u.z$ as a linear combination of basis elements of
     $M(X)$. Then the
     first letter of
     $v$ is
     $x_{j_1}$ and the coefficient by this monomial is equal to
     $1$.
   \end{llll}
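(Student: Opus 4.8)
The plan is to compute $u.z=[u,z]=[x_{j_1},x_{j_2},\dots,x_{j_m},z]$ directly and to rewrite it as a linear combination of basis monomials while tracking the leading term. Because the action of $z$ preserves multidegree, every basis monomial occurring in $u.z$ has multidegree $\mdeg(u)+\mdeg(z)$; hence the leading monomial is selected purely by the lexicographic part of the order, so it suffices to identify which basis monomial has the largest first letter and to read off its coefficient. The only rewriting tool needed, besides the free permutation of the tail letters granted by (\ref{permut}), is the three-letter Jacobi reduction $[a,b,c]=[a,c,b]-[b,c,a]$, which holds in any Lie algebra.

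I would then split into two cases according to the position of $z$ relative to $x_{j_2}$, the smallest letter of $u$. If $z\geqslant x_{j_2}$, then all of $x_{j_3},\dots,x_{j_m},z$ are $\geqslant x_{j_2}$, so by (\ref{permut}) I reorder the letters in positions $3,\dots,m+1$ into non-descending order and obtain a single monomial of the form (\ref{basis}) with first letter $x_{j_1}$; thus $u.z$ equals this basis monomial with coefficient $1$. If $z<x_{j_2}$, the word $[x_{j_1},x_{j_2},z,\dots]$ is not of the form (\ref{basis}) because its third letter is smaller than its second, so I apply the Jacobi reduction to the first three letters (extending it along the tail by bilinearity of the bracket) to get $u.z=[x_{j_1},z,x_{j_2},x_{j_3},\dots,x_{j_m}]-[x_{j_2},z,x_{j_1},x_{j_3},\dots,x_{j_m}]$. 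Here the first summand is already of the form (\ref{basis}) with first letter $x_{j_1}$, while the second becomes a basis monomial with first letter $x_{j_2}$ once its tail is sorted by (\ref{permut}).

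To finish I would compare the (at most two) basis monomials produced. In the first case there is nothing to compare; in the second case the two monomials are distinct, so no cancellation occurs, and since $x_{j_1}>x_{j_2}$ the summand with first letter $x_{j_1}$ is lexicographically larger. In both cases the leading monomial $v$ therefore has first letter $x_{j_1}$ and coefficient $1$, as claimed. I expect the only delicate point to be the bookkeeping verifying that the produced words genuinely satisfy the inequalities defining (\ref{basis})—in particular, in the second case, that $z$ is strictly below the first letter of each summand and that the remaining letters are all $\geqslant z$—together with fixing the convention that a larger first letter means lexicographically larger; the genuine algebraic content is confined to the single Jacobi rewriting.
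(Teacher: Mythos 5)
Your proposal is correct and follows essentially the same route as the paper's proof: the same split into the cases $z\geqslant x_{j_2}$ and $z<x_{j_2}$, the same single Jacobi rewriting $[x_{j_1},x_{j_2},z,\dots]=[x_{j_1},z,x_{j_2},\dots]-[x_{j_2},z,x_{j_1},\dots]$ in the second case, and the same comparison of the two resulting basis monomials via $x_{j_1}>x_{j_2}$. The extra bookkeeping you flag (sorting tails by (\ref{permut}) and checking the inequalities of (\ref{basis})) is exactly what the paper leaves implicit.
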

   \begin{proof}
     Let us consider the representation of
     $u.z$ as a linear combination of the elements of the form
(\ref{basis}). We need to consider two cases.\\[-1ex]

     \noindent
     1. Let
     $z\geqslant x_{j_2}$. In this case,
     \begin{equation*}
       \begin{split}
             v&=[x_{j_1},x_{j_2},x_{j_3},\dots, x_{j_m}].z\\
              &=[x_{j_1},x_{j_2},\dots,x_{j_l},z,x_{j_{l+1}}\dots, x_{j_m}],
       \end{split}
     \end{equation*}
     where
     $x_{j_l}\leqslant z < x_{j_{l+1}}$. We obtain a monomial of the form
(\ref{basis}) whose first letter is
     $x_{j_1}$.\\[-1ex]

     \noindent
     2. Let
     $z<x_{j_2}$. Then
     \begin{equation*}
       \begin{split}
             v&=[x_{j_1},x_{j_2},x_{j_3},\dots, x_{j_m}].z\\
              &=[x_{j_1},x_{j_2},z,x_{j_3}\dots, x_{j_m}]\\
              &=[x_{j_1},z,x_{j_2},x_{j_3}\dots, x_{j_m}]+
                \bigl[\dots[[x_{j_1},[x_{j_2},z]],x_{j_3}],\dots, x_{j_m}\bigr]\\
              &=[x_{j_1},z,x_{j_2},x_{j_3}\dots, x_{j_m}]-
                [x_{j_2},z,x_{j_1},x_{j_3}\dots, x_{j_m}]
       \end{split}
     \end{equation*}
     We obtain a linear combination of two monomials of the form
(\ref{basis}). Since
     $x_{j_2}$ is the smallest letter of
     $u$ the monomial with the first letter
     $x_{j_1}$ is larger than the other one and the coefficient by this monomial
     is equal to
     $1$.
   \end{proof}

   \begin{llll}\label{multybychar}
     Let
     $c=\alpha\overline{c}+\Delta c$ be the representation of
     $c\in M'(X)$ as a linear combination of basis elements with non-zero coefficients and let
     $\overline{c}=[y,x,x_{i_3},\dots,x_{i_m}]$. Then for any
     $z \in X$ the largest monomial in the decomposition of
     $c.z$ as a linear combination of basis elements begins with the letter
     $y$ and the coefficient by this monomial is equal to
     $\alpha$.
   \end{llll}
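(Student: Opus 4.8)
The plan is to reduce the statement to Lemma~\ref{monombychar} applied to the single monomial $\overline c$, then argue that no monomial arising from the ``lower'' part $\Delta c$ can interfere. Write $c=\alpha\overline{c}+\Delta c$, where $\overline{c}=[y,x,x_{i_3},\dots,x_{i_m}]$ is the largest basis monomial of $c$ and $\Delta c=\sum_t \beta_t w_t$ collects the remaining basis monomials $w_t$, each satisfying $w_t<\overline{c}$ in the standard order. Since the action $c\mapsto c.z$ is $R$-linear, I would expand
\begin{equation*}
  c.z=\alpha(\overline{c}.z)+\sum_t \beta_t (w_t.z)
\end{equation*}
and then re-expand each term $\overline{c}.z$ and $w_t.z$ as a linear combination of basis elements of $M(X)$ using the rewriting rules established before Lemma~\ref{monombychar}.

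First I would handle the leading term. By Lemma~\ref{monombychar} applied to $\overline{c}$ (which has length $m\geqslant 2$ and is a basis monomial), the largest basis monomial appearing in $\overline{c}.z$ begins with the letter $y$ and occurs with coefficient $1$; call this monomial $v_0$, so that $\alpha(\overline{c}.z)$ contributes $\alpha v_0$ at the top, with $y$ as first letter. The heart of the argument is then to show that $v_0$ is strictly larger than \emph{every} basis monomial produced by any $w_t.z$, so that after collecting terms the coefficient by $v_0$ is exactly $\alpha$ and no cancellation or domination occurs.

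The key monotonicity observation is that right-multiplication by $z$ raises multidegree by exactly the unit vector $\mdeg(z)$: for any monomial $w$, every basis monomial occurring in $w.z$ has multidegree $\mdeg(w)+\mdeg(z)$. I would first dispose of the case $\mdeg(w_t)<\mdeg(\overline{c})$, where the comparison is immediate since then every monomial of $w_t.z$ has multidegree strictly below $\mdeg(\overline{c})+\mdeg(z)=\mdeg(v_0)$. In the remaining case $\mdeg(w_t)=\mdeg(\overline{c})$, the ordering $w_t<\overline{c}$ is lexicographic, and I must show this lexicographic gap survives the action. Here I would use the explicit description from the proof of Lemma~\ref{monombychar}: inserting $z$ into a basis monomial $[x_{j_1},x_{j_2},\dots]$ of the form~(\ref{basis}) yields a largest term whose first letter is unchanged ($x_{j_1}$) and whose tail is a sorted merge of the old tail with $z$. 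Thus the first letter and the sorted multiset of tail letters of the top monomial of $w_t.z$ are determined by those of $w_t$; since $w_t<\overline{c}$ lexicographically among monomials of the form~(\ref{basis}) with common second letter $x$, the induced top monomials inherit the strict inequality, giving top monomial of $w_t.z$ strictly less than $v_0$.

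The main obstacle I anticipate is case~2 of Lemma~\ref{monombychar}, where $z<x$ (the common smallest letter): there the action of $z$ splits a monomial into \emph{two} basis monomials, one beginning with the original first letter and one beginning with $x$. I must check that for $\overline{c}$ this split still leaves $v_0$ (beginning with $y$) on top with coefficient $1$ as Lemma~\ref{monombychar} guarantees, and that the secondary monomials beginning with $x$ arising from the various $w_t.z$ are all strictly smaller than $v_0$; this follows because such a secondary monomial has first letter $x$, whereas $v_0$ has first letter $y>x$ (as $\overline{c}=[y,x,\dots]$ forces $x<y$), and among monomials of equal multidegree the lexicographic comparison privileges the larger first letter. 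Assembling these comparisons, the unique largest basis monomial of $c.z$ is $v_0$, it begins with $y$, and its coefficient is $\alpha$, which is the assertion.
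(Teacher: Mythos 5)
Your proposal is correct and follows essentially the same route as the paper: apply Lemma~\ref{monombychar} to the leading term $\alpha(\overline{c}.z)$ and then show every basis monomial arising from $(\Delta c).z$ is strictly dominated. You spell out the domination step (additivity of multidegree under the action of $z$, and survival of the lexicographic gap in the equal-multidegree case) more carefully than the paper does, but the argument is the same.
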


   \begin{proof}
     Let
     $c=\alpha\overline{c}+\Delta c$ be a homogeneous Lie monomial
     consisting of monomials of the form
(\ref{basis}) and
     $d$ a monomial appearing in
     $\Delta c$. Since
     $\overline{c}$ is the greatest monomial of
     $c$ we have either
     $\mdeg(\overline{c})>\mdeg(d)$ or
     $\mdeg(\overline{c})=\mdeg(d)$ but
     $\overline{c}$ is greater than
     $d$ lexicographically. Let us represent
     $\alpha(\overline{c}.z)$ and
     $(\Delta c).z$ as sums of elements of the form
(\ref{basis}). By Lemma
  \ref{monombychar}, the largest monomial of the decomposition of
     $\alpha(\overline{c}.z)$ begins with
     $y$ and the coefficient by this monomial is
     $\alpha$. On the other hand, the monomials
     in the decomposition of
     $(\Delta c).z$ are  less than
     $y$ with respect to the standard order. Let
     $b$ be the
     largest monomial in the decomposition of
     $c.z$ to a linear combination of elements of the form
(\ref{basis}). We have
     $b$ begins with
     $y$ and the coefficient by this monomial is equal to
     $\alpha$.
   \end{proof}

   The following result has been proved in
\cite{DKR06} in the case of a free metabelian Lie algebra under
   a field. In his paper, we give the proof of this statement for a free metabelian Lie
   algebra over an arbitrary integral domain.

   \begin{ttt}\label{torsfree}
     The derived subalgebra
     $M'(X)$ of
     $M(X)$ is a torsion-free
     $R[X]$-module.
   \end{ttt}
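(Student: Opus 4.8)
The plan is to prove the contrapositive: if $c\in M'(X)$ and $f\in R[X]$ are both nonzero, then $c.f\neq 0$. Since $R[X]$ is an integral domain, this is precisely torsion-freeness of $M'(X)$ as an $R[X]$-module. The entire argument is a leading-term computation with respect to the standard order, so I would first fix notation: write $c=\alpha\overline{c}+\Delta c$ with leading monomial $\overline{c}=[y,x,x_{i_3},\dots,x_{i_m}]$ and $\alpha\neq 0$. Note that $\overline{c}$ has length $\geqslant 2$ because $c\in M'(X)$, so it really does have the shape required by Lemma~\ref{multybychar}.

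First I would treat the case where $f$ is a single monomial $m=z_1z_2\cdots z_d\in R[X]$. By the definition of the module action, $c.m=(\cdots((c.z_1).z_2)\cdots).z_d$. Applying Lemma~\ref{multybychar} once shows that $c.z_1$ has leading monomial beginning with $y$ and leading coefficient $\alpha$; that leading monomial is again of the form (\ref{basis}) (its second letter is its smallest), so I can feed it back into Lemma~\ref{multybychar} and iterate over $z_2,\dots,z_d$. Hence the leading monomial of $c.m$ begins with $y$ and has coefficient $\alpha\neq 0$, whence $c.m\neq 0$. Moreover, since the standard order compares multidegree first, every monomial of $c$ has multidegree $\leqslant\mdeg(\overline{c})$, and multiplying by $m$ adds $\mdeg(m)$ to each (the order on $\mathbb{Z}^n$ is translation invariant). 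Thus every monomial of $c.m$ has multidegree $\leqslant\mdeg(\overline{c})+\mdeg(m)$, with this maximum attained exactly by the leading monomial.

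Next I would handle an arbitrary $f=\sum_k\beta_k m_k$, where the $m_k$ are pairwise distinct monomials of $R[X]$ and each $\beta_k\neq 0$, so that $c.f=\sum_k\beta_k(c.m_k)$. Distinct commutative monomials have distinct multidegrees, hence the vectors $\mdeg(\overline{c})+\mdeg(m_k)$ are pairwise distinct; let $k^{\ast}$ be the index maximizing $\mdeg(m_k)$. By the previous paragraph the leading monomial $b_{k^{\ast}}$ of $c.m_{k^{\ast}}$ has multidegree $\mdeg(\overline{c})+\mdeg(m_{k^{\ast}})$, whereas for every $k\neq k^{\ast}$ \emph{all} monomials of $c.m_k$ have strictly smaller multidegree. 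Therefore $b_{k^{\ast}}$ cannot be cancelled by any term coming from the summands with $k\neq k^{\ast}$, and inside $\beta_{k^{\ast}}(c.m_{k^{\ast}})$ it occurs with coefficient $\beta_{k^{\ast}}\alpha$, which is nonzero because $R$ is an integral domain. Hence $b_{k^{\ast}}$ survives in $c.f$, so $c.f\neq 0$, completing the proof.

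The only genuine obstacle is the possible cancellation among the leading terms of the different summands $\beta_k(c.m_k)$; the argument dissolves it by separating these contributions according to their pairwise-distinct multidegrees, so that the single summand of largest multidegree dominates and cannot be killed. Everything else is the bookkeeping of Lemmas~\ref{monombychar} and~\ref{multybychar} together with the fact that over an integral domain a product of nonzero scalars is nonzero. I expect the writeup to spend most of its length on the clean iteration of the monomial case and on justifying that the leading monomial carries the maximal multidegree.
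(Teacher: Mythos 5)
Your argument is correct and is essentially the paper's own proof: both reduce to the leading term via Lemma~\ref{multybychar}, peel off the letters of a monomial $f$ one at a time (your iteration is exactly the paper's induction on $|\mdeg(f)|$), and dispose of a general $f$ by isolating the summand of maximal multidegree, which the additivity $\mdeg(u.a)=\mdeg(u)+\mdeg(a)$ prevents from cancelling. The only cosmetic difference is that the paper first replaces $c$ by its homogeneous component of largest multidegree, whereas you work with the leading monomial $\overline{c}$ of the full $c$ directly, which Lemma~\ref{multybychar} already permits.
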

   \begin{proof}
     We need to prove that
     $c.f\neq 0$ whenever
     $c\in M'(X)\backslash \{0\}$ and
     $f\in R[X]\backslash\{0\}$.

     Note that
     $u.a\neq 0$ implies
     \begin{equation}\label{muldeg}
       \mdeg(u.a)= \mdeg(u)+\mdeg(a)
     \end{equation}
     for any Lie monomial
     $u$ and any associative monomial
     $a$. So, without loss of
     generality it can be assumed that the decomposition of
     $c$ to a linear combination of monomials of the form
(\ref{basis}) is homogeneous by all generators and
     $f$ is a monomial. Indeed, we can write
     $c=c_{\overline{\gamma}}+\sum_{\overline{\delta}} c_{\overline{\delta}}$, where
     $c_{\overline{\delta}}$ is a homogeneous Lie polynomial with the multidegree
     $\overline{\delta}$ and
     $\overline{\gamma}$ is the largest multidegree of the monomials in the decomposition of
     $c$. Let also
     $f=\alpha\overline{f}+\Delta f$, where
     $\overline{f}$ is the monomial of the largest degree appearing in
     $f$. It is sufficient to show that
     $c_{\overline{\gamma}}.\overline{f}\neq 0$. Indeed, by
(\ref{muldeg}), the multidegrees of all other summands in the
     decomposition of
     $c.f$ are less than the multidegree of
     $c_{\overline{\gamma}}.\overline{f}$. Therefore, this monomial cannot cancel with the others.

     So, suppose
     $c$ is represented as a linear combination of monomials of the form
(\ref{basis}),
     $\overline{c}=[y,x,x_{i_3},\dots,x_{i_m}]$ is the largest monomial in
     this linear combination, and
     $f$ is an associative monomial of the multidegree
     $\overline{\varepsilon}$. By
     $|\overline{\varepsilon}|$ denote the sum of coordinates of
     $\overline{\varepsilon}$. Let us proceed by induction on
     $|\overline{\varepsilon}|$.

      If
      $|\overline{\varepsilon}|=1$ then
      $f=z$ for some
      $z\in X$. Let
      $b$ be the largest monomial in the representation of
      $c_{\overline{\gamma}}.z$ as a linear combination of
      elements of the form
(\ref{basis}). By Lemma~%
 \ref{multybychar},
      $b$ begins with
      $y$. The coefficient by this monomial is equal to the
      coefficient by
      $\overline{c}$. In particular, this coefficient is not equal
      to zero. Therefore,
      $c_{\overline{\gamma}}.z\neq 0$. By
(\ref{muldeg}), this monomial cannot cancel with monomials in the
      decompositions of
      $c_{\overline{\delta}}.z$. Consequently,
      $c.z\neq 0$.

      Suppose that the statement holds for all
      $\overline{\varepsilon}$ such that
      $|\overline{\varepsilon}|<r$. Let
      $|\overline{\varepsilon}|=r>0$ and let
      $f$ be a monomial of the multidegree
      $\overline{\varepsilon}$. For some generator
      $x_i$, we can write
      $f=\beta x_i f_0$, where
      $\beta\in R\backslash\{0\}$ and
      $|\mdeg{f_0}|=r-1$. We have
      $c.f= c.(\beta x_if_0)=\beta (c.x_i).(f_0)$. As above, we
      get
      $(c.x_i)\neq 0$. Consequently, by the inductive hypothesis
      we obtain
      $(c.x_i).(f_0)\neq 0$. The proof is complete.
   \end{proof}

   The following statement on commutative associative polynomials seems to be
   well-known, but anyway, we give its proof here.

   \begin{llll}\label{asspoly}
     Let
     $R$ be an infinite integral domain and let
     $f\in R[x_1,\dots,x_n]$ be a non-zero polynomial over
     $R$ in a finite number of indeterminates. Then there exist
     $r_1,\dots,r_n\in R$ such that
     $f(r_1,\dots, r_n)\neq 0$.
   \end{llll}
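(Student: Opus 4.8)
The plan is to proceed by induction on the number $n$ of indeterminates, reducing the general case to the one-variable case.

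First I would establish the base case $n=1$. Here $f\in R[x_1]$ is a nonzero polynomial, and the key claim is that $f$ has at most $\deg f$ roots in $R$. To see this, note that if $f(r)=0$ then dividing by the monic polynomial $x_1-r$ gives $f=(x_1-r)q$ with $q\in R[x_1]$ of degree $\deg f-1$; any further root $s\neq r$ then satisfies $(s-r)q(s)=0$, and since $R$ is an integral domain and $s-r\neq 0$, it must be a root of $q$. An induction on the degree thus bounds the number of roots of $f$ by $\deg f$. Because $R$ is infinite it contains more than $\deg f$ elements, so some $r_1\in R$ satisfies $f(r_1)\neq 0$.

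For the inductive step, assume the statement for polynomials in $n-1$ indeterminates. Given a nonzero $f\in R[x_1,\dots,x_n]$, I would regard it as a polynomial in $x_n$ with coefficients in $R[x_1,\dots,x_{n-1}]$, writing $f=\sum_{i=0}^{d} g_i(x_1,\dots,x_{n-1})\,x_n^i$. Since $f\neq 0$, at least one coefficient $g_i$ is a nonzero element of $R[x_1,\dots,x_{n-1}]$. By the inductive hypothesis there are $r_1,\dots,r_{n-1}\in R$ with $g_i(r_1,\dots,r_{n-1})\neq 0$. Substituting these values leaves $f(r_1,\dots,r_{n-1},x_n)$ a polynomial in the single variable $x_n$ whose coefficient of $x_n^i$ is nonzero, hence a nonzero element of $R[x_n]$. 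Applying the base case produces $r_n\in R$ with $f(r_1,\dots,r_{n-1},r_n)\neq 0$, completing the induction.

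The only genuinely delicate point is the base case, specifically the bound of $\deg f$ on the number of roots of a one-variable polynomial; this is exactly where the integral domain hypothesis enters, via the step propagating a root of $f$ to a root of the quotient $q$ (equivalently, the absence of zero divisors in $R$). The infiniteness of $R$ is then what guarantees an element lying outside this finite root set. Everything else is a routine variable-by-variable reduction.
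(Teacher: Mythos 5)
Your proof is correct and follows essentially the same route as the paper: induction on the number of indeterminates, with the one-variable base case handled by bounding the number of roots via successive division by $x-r$ (using that $R$ is an integral domain) and then invoking the infiniteness of $R$. The only cosmetic difference is that the paper factors out all roots at once as $(x-y_1)\cdots(x-y_s)g(x)$ rather than arguing one root at a time, which is the same idea.
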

   \begin{proof}
     Proceed by induction on the number of indeterminates. Let
     $f(x)=\alpha_l x^l+\dots+ \alpha_1 x+\alpha_0$ be a non-zero
     polynomial in one indeterminate. Let us show that this polynomial cannot have
     more than
     $l$ roots. If
     $y_1$ is a root then there is the representation
     $f(x)=(x-y_1)f_1(x)$, where the degree of
     $f_1(x)$ is equal to
     $l-1$. Similarly, one can find a root of
     $f_1(x)$, denote this root by
     $y_2$, and obtain a similar decomposition for
     $f_1(x)$ and so on. Finally, we obtain the representation
     \begin{equation}\label{multass}
       f(x)=(x-y_1)\dots (x-y_s)g(x),
     \end{equation}
     where
     $g(x)$ has no roots. Obviously,
     $f(x)$ has no roots except
     $y_1,\dots y_s$. Indeed, if
     $z$ is a root then substitution of
     $z$ for
     $x$ in
(\ref{multass}) gives
     $f(z)=(z-y_1)\dots (z-y_s)g(z)$. Since
     $R$ is an integral domain, at least one of the multiples is equal to
     $0$ and
     $g(z)\neq 0$ because
     $g(x)$ has no roots. Therefore,
     $z=y_i$ for some
     $i$ such that
     $1\leqslant i \leqslant s$.

     Since
     $R$ is infinite, one can choose
     $r$ not equal to any root of
     $f(x)$. Then
     $f(r)\neq 0$.

     Suppose that the statement of the lemma holds for all
     polynomials in less than
     $k$ indeterminates. Consider a polynomial
     $f$ in
     $k$ indeterminates. We have
     \begin{equation}\label{multypoly}
       f=\sum_{i=0}^l x_k^i f_i,
     \end{equation}
     where
     $f_i\in R[x_1,\dots x_{k-1}]$. Since
     $f\neq 0$ there exists
     $i$ such that
     $f_i\neq 0$. By the inductive assumption, there
     are
     $r_1,\dots, r_{k-1}$ such that
     $f_i(r_1,\dots,r_{k-1})\neq 0$ for some
     $i$. Substituting these values for the corresponding
     indeterminates in
(\ref{multypoly}) we get a non-zero polynomial in the
     indeterminate
     $x_k$ only. Consequently, there exists
     $r_k\in R$ such that substituting this element for
     $x_k$ we obtain a non-zero element of
     $R$. So, the substitution
     $r_1,\dots r_{k}$ for the corresponding indeterminates in
(\ref{multypoly}) gives a non-zero element of
     $R$. This concludes the proof.
   \end{proof}
   \begin{llll}\label{dirsumab}
    Let
    $G$ be a graph such that each its connected component is a complete graph and
    $M(X;G)$ the partially commutative Lie algebra with the defining graph
    $G$. If
    $[x,y].f=0$ for some vertices
    $x$ and
    $y$ and for some polynomial
    $f\in R[X]$ then either
    $[x,y]=0$ (i.e
    $x$ and
    $y$ belong to the same connected component of
    $G$) or
    $f=0$.
  \end{llll}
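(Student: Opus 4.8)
The plan is to split on whether $x$ and $y$ lie in the same connected component. Write $X_1,\dots,X_r$ for the vertex sets of the connected components of $G$. Since each component is a complete graph, $[x_i,x_j]=0$ holds precisely when $x_i$ and $x_j$ are adjacent, i.e. precisely when they lie in the same $X_k$ (for the forward direction one may invoke Lemma~\ref{zero} with $k=2$). Hence if $x$ and $y$ belong to one component then $[x,y]=0$ and the first alternative holds; so from now on I assume $x\in X_p$ and $y\in X_q$ with $p\neq q$ (whence $[x,y]\neq 0$) and aim to deduce $f=0$.

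First I would reduce to the case of a single associative monomial. Writing $f=\sum_m \alpha_m m$ as a sum of pairwise distinct monomials $m\in R[X]$ with $\alpha_m\neq 0$, we get $[x,y].f=\sum_m \alpha_m\,([x,y].m)$. Each summand $[x,y].m$ is homogeneous of multidegree $\mdeg([x,y])+\mdeg(m)$ (cf.~(\ref{muldeg})), and these multidegrees are pairwise distinct, since distinct monomials of $R[X]$ have distinct multidegrees. By the homogeneity principle recorded in Section~\ref{prelim}, a polynomial vanishes in $M(X;G)$ if and only if each of its homogeneous components vanishes; therefore $[x,y].f=0$ forces $\alpha_m\,([x,y].m)=0$ for every $m$. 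As $R$ is a domain and $\mathfrak{B}(X;G)$ is an $R$-basis by Theorem~\ref{baspcom}, it then suffices to prove that $[x,y].m\neq 0$ for each monomial $m$, for this forces every $\alpha_m$ to vanish, i.e. $f=0$.

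To establish $[x,y].m\neq 0$ I would pass to the free metabelian algebra by the complete identification of connected components. Let $Y=\{y_1,\dots,y_r\}$ and let $\psi\colon M(X;G)\to M(Y)$ send each generator of $X_k$ to $y_k$; this is a homomorphism, and since the module action is induced by the bracket it intertwines with the substitution $\psi^{*}\colon R[X]\to R[Y]$, $x_i\mapsto y_{k(i)}$, giving $\psi([x,y].m)=[y_p,y_q].\psi^{*}(m)$. Now $p\neq q$, so $[y_p,y_q]\neq 0$ in the free metabelian algebra $M(Y)$, while $\psi^{*}(m)$ is a product of generators and hence a nonzero monomial in $R[Y]$. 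By Theorem~\ref{torsfree}, $M'(Y)$ is a torsion-free $R[Y]$-module, so $[y_p,y_q].\psi^{*}(m)\neq 0$; consequently $[x,y].m\neq 0$, as required.

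The subtle point, and the step I expect to be the real obstacle, is that one may \emph{not} apply the collapsing homomorphism $\psi$ to $f$ directly: $\psi^{*}$ can kill a nonzero $f$ (for instance a difference of two generators lying in the same clique), so a one-shot collapse would only yield $\psi^{*}(f)=0$, which is far weaker than $f=0$. The device that rescues the argument is to decompose $f$ into monomials first; distinct monomials occupy distinct multidegrees and so cannot cancel, while on a single monomial the collapse is harmless because a monomial never maps to $0$. Finally, completeness of the components is used in an essential way exactly once: it guarantees that $[x,y]\neq 0$ forces $x$ and $y$ into different components, so that $\psi([x,y])=[y_p,y_q]$ is a genuine nonzero bracket of distinct free generators and torsion-freeness can be brought to bear.
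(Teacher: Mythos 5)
Your proof is correct, but it takes a genuinely different route from the paper's. The paper argues by induction on $|X|$: it picks one connected component $\Gamma$ at a time, applies an identification that sends the vertices of $\Gamma$ to generic scalar multiples $\alpha_i z$ of a single vertex, and invokes Lemma~\ref{asspoly} to choose the scalars $\alpha_i$ so that the image of $f$ stays nonzero; the base of the induction is the totally disconnected case, handled by Theorem~\ref{torsfree}. You instead avoid induction entirely: you first split $f$ into its monomials, observe that the summands $[x,y].m$ occupy pairwise distinct multidegrees so that the homogeneity principle of Section~\ref{prelim} isolates each $\alpha_m\,[x,y].m=0$, and then collapse \emph{all} components at once (with unit coefficients) onto the free metabelian algebra $M(Y)$, where Theorem~\ref{torsfree} applies directly. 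You correctly identify the danger of collapsing $f$ in one shot and explain why restricting to a single monomial neutralizes it. Your version is shorter and, notably, does not require $R$ to be infinite, whereas the paper's reliance on Lemma~\ref{asspoly} does; the paper's generic-specialization device is the more flexible tool when the Lie element being acted on is not a single bracket of generators, but for $[x,y]$ your multidegree decomposition suffices.
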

  \begin{proof}
    If
    $G$ is a connected graph then
    $M(X;G)$ is just a free abelian Lie algebra. Therefore,
    $[x,y]=0$ for any
    $x,y\in X$.

    If
    $G$ has at least two connected components then we proceed by induction
    on the number of generators in
    $M(X;G)$. Suppose
    $|X|=2$ and
    $G$ has two connected components (it means that
    $G$ is totally disconnected). Then we have
    $M(X;G)$ is the free metabelian Lie algebra with two generators and the
    statement follows from Theorem~%
\ref{torsfree}.

    Let the statement hold for all graphs having less than
    $k$ vertices and satisfying the conditions of the lemma. Consider a graph
    $G$ such that
    $|X|=k$. If this graph is totally disconnected then
    $M(X;G)$ is a free metabelian algebra and the statement follows from Theorem~%
 \ref{torsfree}.

    Suppose that
    $G$ contains some edges and
    $[x,y].f=0$, where
    $f\in R [X]$ and
    $x$ and
    $y$ are in different connected components. Consider an arbitrary connected component
    $\Gamma$ of
    $G$ such that
    $\Gamma$ has at least two vertices. Let
    $Z=\{z,z_1,\dots z_{m}\}$ be the set of all vertices in this component.
    If
    $x\in \Gamma$ (or
    $y\in \Gamma$) then let us set
    $z=x$
    ($z=y$, respectively). By
    $Y$ denote the set obtained from
    $X$ by deleting vertices
    $z_1,\dots, z_{m}$.

    Let
    $\varphi: M(X;G)\rightarrow M(Y;G_Y)$ be an identification on the connected component
    $\Gamma$ defined as follows:
    \begin{equation}\label{glueconst}
      \varphi(x)=
      \begin{cases}
         \alpha_i z & \text{if } x=z_i\\
         x & \text{else}
      \end{cases}.
    \end{equation}

    We have the following representation of
    $f$:
    $$f=\sum_j \gamma_j f_j z^{a_j}z_1^{a_{j,1}}\dots z_m^{a_{j,m}},$$
    where
    $f_j$ are monomials in
    $R[X\backslash Z]$. We obtain
    $$\varphi(f)=\sum_j \gamma_i\alpha_1^{a_{j,1}}\dots \alpha_m^{a_{j,m}} f_j z^{a_i+a_{j,1}+\dots+a_{j,m}}.$$
    Combining like terms by degrees of the elements in
    $Y$ we obtain a sum such that each its coefficient is a polynomial in
    $\alpha_1,\dots, \alpha_m$. Moreover, since
    $f\neq 0$ at least one of the obtained coefficients is not equal to zero identically. Therefore, by Lemma~%
\ref{asspoly}, one can find
    $\alpha_1,\dots,\alpha_n$ such that
    $\varphi(f)\neq 0$. From
(\ref{glueconst}), it follows that
    $\varphi$ cannot map vertices from different connected
    components to one component. Consequently,
    $\varphi([x,y])=[x,y]\neq 0$ . Note that
    $M(Y;G_Y)$ satisfies the conditions of the lemma and the number of vertices in
    $G_Y$ is less than
    $k$. Therefore, by the inductive hypothesis,
    $[x,y].\varphi(f)\neq 0$ in
    $M(Y;G_Y)$. It means that
    $[x,y].f\neq 0$ in
    $M(X;G)$. The proof is complete.
  \end{proof}

  \begin{ddd}
    Let
    $A$ be an associative commutative ring and let
    $M$ be an
    $A$-module. An \emph{annihilator} of a non-zero element
    $m\in M$ is the ideal
    $A$ consisting of all elements
    $a\in A$ such that
    $m\cdot a=0$.
  \end{ddd}
  Since for any partially commutative algebra
  $M(X;G)$ its derived subalgebra
  $M'(X;G)$ is an
  $R[X]$-module, the annihilator of any element
  $g\in M'(X;G)$ in
  $R[X]$ is defined.

  Let us move on to the description of annihilators in
  $M'(X;G)$. Define the ideal
  $I^G_{i,j}$ of
   $R[X]$ as follows. If
   $x_i$ and
   $x_j$ belong to different connected components in
   $G$ then put
   $I^G_{i,j}=0$. Let
   $x_i$ and
   $x_j$ be in the same connected component. For each path
   $(x_i,y_1,y_2,\dots, y_s,x_j)$ connecting these vertices in
   $G$ consider the monomial
   $y_1y_2\dots y_s$. Define
   $I^G_{i,j}$ as the ideal generated by all such monomials. Obviously,
   it is sufficient to consider the monomials corresponding to the simple paths only.
   \begin{ttt}\label{ann}
     Let
     $M(X;G)$ be the partially commutative metabelian Lie algebra
     with the set of generators
     $X$ and the defining graph
     $G$ and let
     $x_i,x_j\in X$. If
     $x_i$ and
     $x_j$ are not adjacent then the annihilator of
     $[x_i,x_j]$ is equal to
     $I^G_{i,j}$.
   \end{ttt}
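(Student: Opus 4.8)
The plan is to establish the two inclusions $I^G_{i,j}\subseteq \mathrm{Ann}([x_i,x_j])$ and $\mathrm{Ann}([x_i,x_j])\subseteq I^G_{i,j}$ separately. Note first that $[x_i,x_j]\neq 0$ by Lemma~\ref{zero} (as $x_i,x_j$ are non-adjacent, they lie in distinct components of $G_{\{x_i,x_j\}}$), so the annihilator is genuinely defined, and that $\mathrm{Ann}([x_i,x_j])$ is an ideal of $R[X]$ since the action is by the commutative ring $R[X]$. The decisive structural remark is that $I^G_{i,j}$ is a \emph{monomial} ideal and $R[X]$ is commutative: a homogeneous element of $R[X]$ is a single monomial up to a scalar, and since $\mdeg([x_i,x_j].m)=\mdeg([x_i,x_j])+\mdeg(m)$ records $\mdeg(m)$, the element $[x_i,x_j].m$ is homogeneous with $m$ recoverable from its multidegree. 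Hence distinct monomials $m$ produce images in pairwise distinct multidegrees, so no cancellation between them can occur, and everything reduces to the behaviour of $[x_i,x_j].m$ on a single monomial $m$.

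For $I^G_{i,j}\subseteq\mathrm{Ann}([x_i,x_j])$ it suffices, as $\mathrm{Ann}$ is an ideal, to kill the monomial generators. Let $(x_i,y_1,\dots,y_s,x_j)$ be a simple path in $G$ and $m=y_1\cdots y_s$ the associated generator; then $[x_i,x_j].m=[x_i,x_j,y_1,\dots,y_s]$ is a monomial whose support $W=\{x_i,x_j,y_1,\dots,y_s\}$ is connected in $G_W$, the path supplying the necessary edges. Expanding this monomial as an $R$-combination of monomials of the form \eqref{basis} preserves multidegree, so every monomial occurring has support $W$; since $G_W$ is connected its first two letters lie in one component, whence each such monomial vanishes in $M(X;G)$ by Lemma~\ref{zero}. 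Therefore $[x_i,x_j].m=0$.

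For the reverse inclusion suppose $f\notin I^G_{i,j}$; I will show $[x_i,x_j].f\neq 0$. Writing $f=\sum_m\alpha_m m$ and using that $I^G_{i,j}$ is a monomial ideal, some $m$ with $\alpha_m\neq 0$ lies outside $I^G_{i,j}$. By the first inclusion the monomials in $I^G_{i,j}$ contribute $0$, so it remains to prove $[x_i,x_j].m\neq 0$ for each $m\notin I^G_{i,j}$; the surviving terms then occupy pairwise distinct multidegrees and cannot cancel. The combinatorial heart is that $m\notin I^G_{i,j}$ forces $x_i$ and $x_j$ into \emph{different} components of $G_W$, where $W=\{x_i,x_j\}\cup\supp(m)$: otherwise a simple $x_i$--$x_j$ path in $G_W$ would have all interior vertices in $\supp(m)$, and the product of those vertices would be a generator of $I^G_{i,j}$ dividing $m$. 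Granting this, I project $M(X;G)\to M(W;G_W)$ (which fixes $[x_i,x_j].m$) and then apply the complete identification of connected components $\psi\colon M(W;G_W)\to M(Y)$ sending each vertex to the chosen representative of its component. As $x_i,x_j$ lie in different components, $\psi([x_i,x_j].m)=[\psi(x_i),\psi(x_j)].\psi(m)$ is the action of a nonzero monomial on the bracket of two distinct free generators, which is nonzero by Theorem~\ref{torsfree}. Hence $[x_i,x_j].m\neq 0$, and assembling the distinct-multidegree terms gives $[x_i,x_j].f\neq 0$.

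The main obstacle is this reverse inclusion, and inside it the two non-vanishing checks. A part of the trouble is merely normalisation: $[x_i,x_j,y_1,\dots,y_s]$ is not of the form \eqref{basis}, so Lemma~\ref{zero} cannot be quoted verbatim and one must rewrite while controlling the support—and one must resist the tempting shortcut of swapping the first two letters via Lemma~\ref{equality}, which only yields $2u=0$ and is useless when $R$ has characteristic $2$. The genuinely new ingredient is the dictionary between a monomial lying in the combinatorially defined ideal $I^G_{i,j}$ and the separation of $x_i,x_j$ in the support subgraph $G_W$, together with the use of the complete identification and the torsion-freeness of $M'(Y)$ over $R[Y]$ to certify non-vanishing; the commutativity of $R[X]$ then dispenses with all cancellation issues essentially for free.
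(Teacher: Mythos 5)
Your proof is correct, but it follows a genuinely different route from the paper's. For the inclusion $I^G_{i,j}\subseteq\mathrm{Ann}([x_i,x_j])$ the paper runs an induction on the number of intermediate vertices of the path, expanding $[x_i,x_j].(y_1\cdots y_k)$ via the metabelian identity; your observation that $[x_i,x_j,y_1,\dots,y_s]$ is a homogeneous element whose support graph is connected, so that every basis monomial of the form \eqref{basis} in its expansion dies by Lemma~\ref{zero}, is shorter and equally valid (and your caution about not invoking Lemma~\ref{equality} on the first two letters, which only yields $2u=0$, is well placed). The bigger divergence is in the reverse inclusion: the paper splits into two cases, treating non-adjacent $x_i,x_j$ in different components via the identical simplification onto a graph with complete components together with Lemma~\ref{dirsumab} (which rests on Lemma~\ref{asspoly} and hence on $R$ being an \emph{infinite} integral domain), and treating the same-component case by an induction on $|X|$ along a fixed simple path, peeling off summands $\widetilde f_j\in I^{G_j}_{1,2}$ until $f$ is exhibited as $y_1\cdots y_s\widehat f_s$ plus ideal elements. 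You instead reduce to a single monomial $m$ by the multidegree-separation argument (legitimate, since $\mdeg([x_i,x_j].m)$ determines $m$ and homogeneous components of a zero element vanish), translate $m\notin I^G_{i,j}$ into the separation of $x_i$ and $x_j$ in $G_W$ with $W=\{x_i,x_j\}\cup\supp(m)$, and then certify $[x_i,x_j].m\neq 0$ by projecting, completely identifying components, and invoking Theorem~\ref{torsfree} in the free metabelian algebra $M(Y)$ --- exactly the mechanism the paper uses to prove Lemma~\ref{zero} and Theorem~\ref{baspcom}, but not deployed there for the annihilator. What your approach buys is the elimination of Lemmas~\ref{dirsumab} and~\ref{asspoly} and of the double induction, a uniform treatment of the two cases, and validity over an arbitrary integral domain rather than an infinite one; what the paper's approach buys is the auxiliary Lemma~\ref{dirsumab}, which it reuses nowhere else, so your version is arguably the leaner one for this theorem.
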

   \begin{proof}
     First of all, let us show that if
     $f\in I^G_{i,j}$, then
     $[x_i,x_j].f=0$. It is sufficient to prove this statement for the elements generating
     $I^G_{i,j}$.

     Let us show that the statement is true for the paths having one intermediate vertex,
     i.e in the case
     $s=1$. We have
     $[x_i,x_j].y_1=[[x_i,x_j],y_1]=[[x_i,y_1],x_j]+[x_i,[x_j,y_1]]=0$.

     Let the statement hold for all simple paths having
     $s<k$ intermediate vertices. Suppose that the path
     $(x_i,y_1,\dots, y_k,x_j)$ has
     $k$ intermediate vertices. We have
     \begin{equation}\label{decom}
       [x_i,x_j].(y_1y_2\dots y_k)=\sum [x_i.(y_{i_1}\dots y_{i_t}),x_j.(y_{j_1}\dots y_{j_{k-t}})],
     \end{equation}
     where the sum is taken by all ordered partitions of the set
     $\{y_1,y_2,\dots,y_k\}$ by two subsets
     $\{y_{i_1}\dots y_{i_t}\}$ and
     $\{y_{j_1}\dots y_{j_{k-t}}\}$. Since
     $M(X;G)$ is metabelian it follows from
(\ref{decom}) that:
     \begin{equation*}
       \begin{split}
          [x_i,x_j].(y_1y_2\dots y_k)&=[x_i.(y_1\dots y_k),x_j]+[x_i,x_j.(y_1\dots y_k)]\\
                                     &= [[x_i,y_1].(y_2\dots y_k),x_j]+
                                     [x_i,[x_j,y_1].(y_2\dots y_k)].
       \end{split}
     \end{equation*}
     Since
     $x_i$ are
     $y_1$ adjacent in
     $G$ the first summand is equal to
     $0$. The second summand contains the multiple
     $[x_j,y_1].(y_2\dots y_k)$. Considering the path
     $(y_1,y_2,\dots, x_j)$ we obtain that this multiple is equal to zero by the induction hypothesis.
     Consequently,
     $[x_i,x_j].(y_1y_2\dots y_k)=0$.

     So, we have shown that
     $I^G_{i,j}$ is a subset of the annihilator of
     $[x_i,x_j]$. Let us prove the inverse statement. Without loss of generality, the annihilator  of
     $[x_1,x_2]$ can be considered. There are two cases:\\[-1ex]

     \noindent
     1. Suppose that
     $x_1$ and
     $x_2$ belong to different connected components of
     $G$. Let us add edges to the graph
     $G$ in such a way that any two vertices belonging to the same
     connected component of
     $G$ would be connected. Denote this graph by
     $\overline{G}$. So, connected components of
     $G$ and
     $\overline{G}$ consist of the same vertices and each connected component of
     $\overline{G}$ is a complete graph.

     Consider the identical simplification
     $\varphi: M(X,G)\rightarrow M(X,\overline{G})$. If
     $[x_1,x_2].f=0$ in
     $M(X,G)$, then
     $\varphi([x_1,x_2].f)=0$. Therefore,
     $[x_1,x_2].f=0$ in
     $M(X,\overline{G})$. So, by Lemma~%
\ref{dirsumab}, we have
     $f=0$.\\

     \noindent
     2. If
     $x_1$ and
     $x_2$ are not adjacent and belong to the same connected component, then we proceed by induction on
     $|X|$. Let
     $|X|=3$. Then
     $G$ has exactly two edges, namely
     $\{x_1,x_3\}$ and
     $\{x_2,x_3\}$. Indeed, otherwise, either
     $x_1$ and
     $x_2$ are adjacent or they belong to different connected components.

     Consider the projection
     $\pi_{Y}: M(X;G) \rightarrow M(Y;G_Y)$, where
     $Y=\{x_1,x_2\}$. Let
     $[x_1,x_2].f=0$. We have
     $\pi_Y ([x_1,x_2].f)=[x_1,x_2].\pi_Y(f)=0$. Since
     $G_Y$ is totally disconnected
     $M(Y;G_Y)=M(Y)$. Therefore, by Theorem~%
\ref{torsfree},
     $\pi_Y(f)=0$. Consequently,
     $f=x_3\widehat{f}$.

     Suppose that the statement holds for
     $|X|<n$. Consider an algebra
     $M(X;G)$, where
     $X$ contains
     $n$ vertices.  Let
     $(x_1,y_1,y_2,\dots, y_s,x_2)$ be a simple path connecting
     $x_1$ and
     $x_2$.

     By
     $X_i$ denote the set
     $X\backslash\{y_i\}$, where
     $i=1,2,\dots, s$. Let
     $G_i$ be the graph generated by
     $X_i$.

     Consider the set of projections
     $\pi_{X_i}: M(X;G)\rightarrow M(X_i;G_i)$, where
     $i=1,2,\dots,s $. Note that
     $\pi_{X_i}(x_1)=x_1$ and
     $\pi_{X_i}(x_2)=x_2$ for all
     $i$ because the path
     $(x_1,y_1,y_2,\dots, y_s,x_2)$ is simple. Therefore, if
     $[x_1,x_2].f=0$ in
     $M(X;G)$ then
     $\pi_{X_1}([x_1,x_2].f)=[x_1,x_2].\pi_{X_1}(f)=0$ in
     $M(X_1;G_1)$. Let us remark that
     $M(X_1;G_1)$ is a subalgebra of
     $M(X;G)$ and
     $|X_1|=n-1$. Consequently, by the inductive hypothesis,
     $\pi_{X_1}(f)$ is in the ideal
     $I^{G_1}_{1,2}$ of the algebra
     $M(X_1;G_1)$. In particular,
     $\pi_{X_1}(f)$ does not contain
     $y_1$. Since
     $\pi_{X_1}$ acts identically on all generators but
     $y_1$ there is the representation
     $f=\pi_{X_1}(f)+f_1$, where
     $f_1\in R[X]$ is such that
     $\pi_{X_1}(f_1)=0$.  Consequently,
     $f_1=y_1 \widehat{f}_1$. It is clear that
     $I^{G_1}_{1,2} \subseteq I^G_{1,2}$. Therefore,
     $\pi_{X_1}(f)\in I^G_{1,2}$.

     Suppose that for some
     $i<s$ there is the representation
     \begin{equation}\label{sumid}
       f=f_i+\sum_{j=1}^i \widetilde{f}_{j},
     \end{equation}
     where
     $\widetilde{f}_{j}\in I^{G_j}_{1,2}$, and
     $f_i=y_1y_2\dots y_i \widehat{f}_i$. Obviously,
     $I^{G_j}_{1,2} \subseteq I^G_{1,2}$ for
     $j=1,2\dots, i$. Thus, we have
     $0=[x_1,x_2].f=[x_1,x_2].f_i+
       \sum_{j=1}^i [x_1,x_2].\widetilde{f}_{j}=
       [x_1,x_2].f_i$. Therefore,
     $0=\varphi_{i+1}([x_1,x_2].f_i)=[x_1,x_2].\pi_{X_{i+1}}(f_i)$.
     So, by the induction hypothesis,
     $\pi_{X_{i+1}}(f_i)$ belongs to the ideal
     $I^{G_{i+1}}_{1,2}$ of the algebra
     $M(X_{i+1};G_{i+1})$. As above, we obtain
     $f_i=f_{i+1}+\widetilde{f}_{i+1}$, where
     $\widetilde{f}_{i+1}=\pi_{X_{i+1}}(f_i)\in I^{G_{i+1}}_{1,2} \subseteq I^G_{1,2}$
     and
     $f_{i+1}$ is such that
     $\pi_{X_{i+1}}(f_{i+1})=0$. Therefore,
     $f_{i+1}=y_{i+1}f'_{i+1}$. Since each monomial of
     $f_i$ is of the form
     $g=y_1y_2\dots y_i g'$ we obtain
     $f_{i+1}=y_1y_2\dots y_{i+1}\widehat{f}_{i+1}$. Consequently,
     there is the following decomposition:
     $f=f_{i+1}+\sum_{j=1}^{i+1} \widetilde{f}_{j}$. So,
     decomposition
(\ref{sumid}) holds for any
     $i$.

     Setting
     $i=s$ in
(\ref{sumid}) we obtain
     $f=y_1y_2\dots y_s\widehat{f}_s+\sum_{j=1}^{s} \widetilde{f}_{j}$.
     As above,
     $\sum_{j=1}^{s} \widetilde{f}_{j}\in I^G_{1,2}$. Since
     $y_1y_2\dots y_s$ is a generating element of
     $I^G_{1,2}$, the monomial
     $y_1y_2\dots y_s\widehat{f}_s$ is also in this ideal.  So, the proof is complete.
   \end{proof}

   Before we move on to a description of centralizers in partially commutative metabelian Lie algebras
   we prove one more auxiliary assertion.

   Let
   $X_{c,z}=X_{c}\cup\{z\}$ for any
   $z\in X\backslash X_c$, and let
   $G_{c,z}$ be the subgraph of
   $G$ generated by
   $X_{c,z}$.

   Remind that
   $z$ is called a \emph{cutpoint} of
   $G$ if its deletion increases the number of connected components.

   \begin{llll}\label{monombycharpc}
     Let
     $c\in M'(X,G)$ be a non-zero element that can be written as a homogeneous
     linear combination of basis elements. If
     $c.z=0$ for some
     $z\in X$ then
     $z$ does not appear in
     $c$ and it is a cutpoint of
     $G_{c,z}$.
   \end{llll}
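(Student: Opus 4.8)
The plan is to translate the vanishing $c.z=0$ into a purely combinatorial flow problem on $G_{c,z}$ via the abelianized (partial--derivative) model of the derived algebra, and then read off both assertions.

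\medskip
\noindent\emph{Reductions and the shape of $c$.} First I would pass to $G_{c,z}$: since $c.z$ involves only the generators in $X_{c,z}$, applying the projection $\pi_{X_{c,z}}$ (which is the identity on $X_{c,z}$) shows that $c.z=0$ already holds in $M(X_{c,z};G_{c,z})$, so I may assume $X=X_{c,z}$ and $G=G_{c,z}$. Next, by Theorem~\ref{baspcom} together with homogeneity I would write $c=\sum_{C}\alpha_C[w_C,x,T_C]$, where $x=\min X_c$ is the common second letter, and for each connected component $C$ of $G_c$ not containing $x$, $w_C=\max C$ is its largest vertex; the sum runs over those components that actually occur, with $\alpha_C\neq 0$. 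That the admissible first letters are exactly these component--maxima follows from the construction of $\mathfrak B(X;G)$: a first letter must lie in a component of $G_c$ different from that of $x$ (Lemma~\ref{zero}), and the chosen representative is the largest such (Lemma~\ref{equality}). Since $c\neq 0$, at least one $\alpha_C\neq 0$.

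\medskip
\noindent\emph{Coordinate model.} I would then invoke the standard realization of $M'(X)$ as an $R[X]$--submodule of $R[X]^n$ via partial (Fox) derivatives $\theta(c)=(\partial_1c,\dots,\partial_nc)$, under which $[x_a,x_b]\mapsto x_be_a-x_ae_b$, $\theta(u.f)=f\,\theta(u)$, and the defining relations generate $N'=\sum_{\{a,b\}\in E}(x_be_a-x_ae_b)R[X]$; thus $M'(X;G)\cong\theta(M'(X))/N'$ and $c.z=0$ if and only if $z\,\theta(c)\in N'$. (Injectivity of $\theta$ over an integral domain is the free--metabelian companion of Theorem~\ref{torsfree} and can be extracted from the basis.) A direct computation on the displayed $c$ gives $\theta(c)=\mathbf f$ with $f_{w_C}=\alpha_C\,M/w_C$, $f_x=-(\sum_C\alpha_C)\,M/x$ and all other coordinates $0$, where $M=\prod_i x_i^{\delta_i}$ is the monomial of multidegree $\mdeg(c)$. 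A pleasant feature is that this description ignores the order of $z$ relative to $x$, so the awkward case split $z<x$ versus $z\geqslant x$ never arises.

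\medskip
\noindent\emph{Flow reformulation.} Because $N'$ is multihomogeneous and $z\mathbf f$ is multihomogeneous, any expression $z\mathbf f=\sum_{\{a,b\}}h_{ab}(x_be_a-x_ae_b)$ forces each $h_{ab}$ to be a scalar multiple of a single monomial, $h_{ab}=\lambda_{ab}\cdot\bigl(Mz/(x_ax_b)\bigr)$ with $\lambda_{ab}=-\lambda_{ba}\in R$ supported on edges. Dividing each coordinate equation by the monomial $Mz/x_v$ collapses the membership to the scalar system $\sum_{b\sim v}\lambda_{vb}=r_v$, where $r_{w_C}=\alpha_C$, $r_x=-\sum_C\alpha_C$, and $r_v=0$ otherwise. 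This is exactly the divergence equation of a flow on $G_{c,z}$, which is solvable over the integral domain $R$ (route the demands along a spanning tree of each component, back--substituting from the leaves) if and only if $\sum_{v\in D}r_v=0$ for every connected component $D$ of $G_{c,z}$. Hence $c.z=0$ iff each component of $G_{c,z}$ carries total demand zero.

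\medskip
\noindent\emph{Conclusions, and the main difficulty.} If $z\in X_c$ then $G_{c,z}=G_c$, and an occurring component $C$ remains a component with total demand $\alpha_C\neq 0$, violating solvability; so $c.z=0$ forces $z\notin X_c$, the first assertion. Assuming now $z\notin X_c$, deleting $z$ from $G_{c,z}$ returns $G_c$, and $z$ merges precisely the components of $G_c$ to which it is adjacent. If $z$ met at most one of them, some unmerged component would still carry nonzero demand --- an occurring $C$ with $\alpha_C\neq 0$, or, in the case where all occurring components coincide with the single component met, the component $C_0$ of $x$ with demand $-\sum_C\alpha_C=-\alpha_{C}\neq 0$ --- again contradicting solvability. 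Thus $z$ is adjacent to at least two components of $G_c$, so its deletion strictly increases the number of components and $z$ is a cutpoint of $G_{c,z}$. The step I expect to be hardest is the flow reformulation: rigorously converting $z\mathbf f\in N'$ into the scalar divergence system and justifying the zero--demand solvability criterion over an arbitrary integral domain --- equivalently, controlling the cancellations that occur when $c.z$ is rewritten in the basis $\mathfrak B(X;G)$.
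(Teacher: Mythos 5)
Your proof is correct, but it takes a genuinely different route from the paper's. The paper works entirely inside the monomial basis: it reorders the generators so that $z$ is not smaller than any letter of $c$, writes $c=\sum_i\beta_i[y_i,x,x_{i_3},\dots,x_{i_m}]$ with $y_i$ the component maxima of $G_c$ (your same normal form), and then tracks the \emph{leading} monomial $\overline{c}.z=[y_r,x,\dots]$ of $c.z$ via Lemmas~\ref{monombychar} and~\ref{multybychar}; vanishing of $c.z$ forces this leading term either to die by Lemma~\ref{zero} (connecting $x$ to $y_r$ through $z$) or to collide with another basis class $[y_j,x,\dots]$ (connecting $y_r$ to $y_j$ through $z$), and either event makes $z$ a cutpoint. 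You instead embed $M'(X)$ into $R[X]^n$ by the Magnus-type map $[x_a,x_b]\mapsto x_be_a-x_ae_b$, identify the relation module, and reduce $c.z=0$ to the solvability of a divergence system on $G_{c,z}$ with demands $\alpha_C$ at the component maxima and $-\sum_C\alpha_C$ at $x$; the necessity of zero total demand on each component (the only direction you actually use, and it is immediate by summing the vertex equations) then yields both conclusions. Your version buys a cleaner, symmetric argument with no reordering of generators and no case split on the position of $z$, and it actually gives an exact criterion for $c.z=0$ rather than just the two stated consequences; what it costs is the coordinate model itself, which the paper never sets up. That model is standard (it is essentially the construction behind the cited work of Daniyarova--Kazachkov--Remeslennikov), and its well-definedness and injectivity over an arbitrary integral domain do follow as you suggest --- one realizes the metabelian Lie structure on $RX\oplus W$ with $W=\sum_{a\neq b}(x_be_a-x_ae_b)R[X]$ and checks that the images of the monomials of Theorem~\ref{freemetabbas} are triangular, hence independent --- but for a self-contained argument you would need to write out that verification, since it is the load-bearing step your proof imports from outside the paper.
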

   \begin{proof}
     Let us reorder vertices of
     $X$ in such a way that
     $z$ would be not less than the vertices appearing in
     $c$. If
     $c$ contains
     $z$ or if
     $z$ is greater than the smallest letter appearing in
     $c$ then we can use the initial order on
     $X$. Otherwise, we can just permute
     $z$ and the smallest letter of
     $c$ in the initial linear order of
     $X$. Note that if
     $c$ admits a representation as a homogeneous linear
     combination of basis elements under some order on
     $X$ then it can be rewritten in such form under any other
     order on
     $X$. Moreover, for all orders on
     $X$ the multidegrees of the monomials in the corresponding
     representations are same.

     By Lemma
\ref{zero},
     $G_c$ has at least two connected components. Suppose that
     $\Gamma_{0},\Gamma_{2},\dots, \Gamma_{s}$ are connected components
     of
     $G_c$ (%
     $s\geqslant 1$). Let
     $y_0,y_1,\dots, y_{s}$ be the largest vertices of these
     components such that
     $y_i$ lies in
     $\Gamma_{i}$ for each
     $i=0, \dots, s$. We may assume that
     $y_0<y_1<y_2<\dots<y_s$.
     By
     $x$ denote the minimal vertex appearing in
     $c$. Let
     $x$ belong to
     $\Gamma_{0}$.

     We have:
     \begin{equation} \label{basedecom}
       c=\sum_{i=1}^s \beta_i [y_i,x,x_{i_3},\dots, x_{i_m}],
     \end{equation}
     where all summands have the same multidegree and some
     $\beta_i$ are not equal to
     $0$. Let
     $r\leqslant s$ be the largest number such that
     $\beta_r\neq 0$. Consider
     $c.z$ as an element of
     $M(X)$. Since
     $z\geqslant x$,  the proofs of Lemma~
\ref{monombychar} (case 1) and Lemma~%
 \ref{multybychar} imply that the largest monomial of
     $c.z$ can be written as follows:
     $\overline{c.z}=\overline{c}.z=[y_r,x,x'_{r_3},\dots, x'_{r_{m+1}}]$.

     Let
     $c.z=0$ in
     $M(X;G)$. If
     $\overline{c}.z=0$ in
     $M(X;G)$, then Lemma~%
\ref{zero} implies that
     $y_r$ and
     $x$ belong to the same connected component of
     $G_{c,z}$, but in different connected components of
     $G_c$. Consequently,
     $z$ does not appear in
     $c$ and there are vertices in
     $\Gamma_{0}$ and
     $\Gamma_{r}$ adjacent to
     $z$. It means that
     $z$ is a cutpoint of
     $G_{c,z}$.

     If
     $\overline{c}.z\neq 0$ in
     $M(X;G)$, then the monomial
     $\overline{c}.z$ is equal to
     $c'=[y_j,x,\dots, x'_{j_{m+1}}]$ for some
     $j\neq r$ in this algebra and
     $\mdeg(\overline{c})=\mdeg(c')$. But it is possible only if
     $y_j$ and
     $y_r$ are in the same connected component of
     $G_{c,z}$. Therefore,
     $z$ does not appear in
     $c$ and it is a cutpoint of
     $G_{c,z}$.
   \end{proof}

   Let
   $G$ be an arbitrary graph with the set of vertices
   $X$ and let
   $u$ be an arbitrary element of
   $M(X;G)$. By
   $C_{G}(u)$ denote the centralizer of
   $u\in X$ in the algebra
   $M(X;G)$. So,
   $C_{G}(u)$ is the set of all elements
   $v\in M(X;G)$ such that
   $[v,u]=0$. Let us also introduce the notation
   $\mathcal{C}_G(x)$ for
   $C_{G}(x)\cap M'(X;G)$. We are going to describe the centralizers
   of the elements in
   $X$ and their linear combinations.  We need this description in the proof of
   Theorem~%
\ref{main} in Sec.~%
 \ref{univequiv}. The following theorem holds.

  \begin{ttt}\label{centrgen}
    Let
    $G$ be a graph with the set of vertices
    $X=\{x_1,x_2,\dots,x_n\}$.\\
    1. If
       $x_n$ is an isolated vertex in
       $G$ then
       $C_G(x_n)$ consists of the elements
       $v$ of the form
       \begin{equation} \label{centralizeriso}
         v=\alpha_n x_n,
       \end{equation}
       where
       $\alpha_n \in R$.\\
    2. If the degree of
       $x_n$ is equal to
       $1$ in
       $G$ (say, it is adjacent to
       $x_{n-1}$) then
       $C_G(x_n)$ consists of all elements
       $v$ of the form
       \begin{equation} \label{centralizerdan}
         v=\alpha_{n-1}x_{n-1}+\alpha_n x_n,
       \end{equation}
       where
       $\alpha_{n-1},\alpha_n\in R$.\\
    3. If
       $x_n$ is adjacent to
       $x_{r+1},\dots, x_{n-1}$ in
       $G$ ($r \leqslant n-3$), then
       $C_G(x_n)$ consists of all elements
       $v$ of the form
       \begin{equation}\label{centralizer}
         v=\sum_{k=r+1}^n \alpha_k x_k +\sum_{r+1\leqslant i<j\leqslant n-1}[x_i,x_j].f_{ij},
       \end{equation}
       where
       $\alpha_k \in R$,
       $f_{ij}\in R[X\backslash \{x_n\}]$.
  \end{ttt}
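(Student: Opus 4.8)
The plan is to split a centralising element into its linear and its higher‑degree parts and then to reduce everything to describing the kernel of the operator $c\mapsto c.x_n$ on $M'(X;G)$. First I would write an arbitrary $v\in M(X;G)$ as $v=\sum_{k}\alpha_k x_k+c$ with $c\in M'(X;G)$, so that $[v,x_n]=\sum_k\alpha_k[x_k,x_n]+c.x_n$. Here the first summand has length $2$ and $c.x_n$ has length $\geq 3$, so by the multidegree homogeneity recalled in Section~\ref{prelim} the relation $[v,x_n]=0$ is equivalent to $\sum_k\alpha_k[x_k,x_n]=0$ together with $c.x_n=0$. The monomials $[x_k,x_n]$ with $x_k$ non-adjacent to $x_n$ are nonzero by Lemma~\ref{zero} and have pairwise distinct multidegrees, hence are linearly independent; thus $\alpha_k=0$ unless $x_k=x_n$ or $x_k$ is adjacent to $x_n$. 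This already yields the asserted shape of the linear part of $v$ in all three cases, and it remains to determine $\{c\in M'(X;G):c.x_n=0\}$, which I may assume homogeneous since $\mathcal M$ (below) is multidegree graded.

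For cases~1 and~2 I would invoke Lemma~\ref{monombycharpc}: a nonzero homogeneous $c$ with $c.x_n=0$ makes $x_n$ a cutpoint of $G_{c,x_n}$. But if $x_n$ is isolated or an endpoint of $G$, its degree in the induced subgraph $G_{c,x_n}$ is at most $1$, so it cannot be a cutpoint; hence $c=0$, which finishes these two cases.

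For case~3 set $N=\{x_{r+1},\dots,x_{n-1}\}$ (the neighbours of $x_n$), $X'=X\setminus\{x_n\}$, and let $\mathcal M$ be the $R[X']$-submodule of $M'(X;G)$ generated by the brackets $[x_i,x_j]$ with $x_i,x_j\in N$. The inclusion ``$\supseteq$'' in~(\ref{centralizer}) is immediate: for $x_i,x_j\in N$ one has $[x_i,x_j].x_n=[[x_i,x_n],x_j]+[x_i,[x_j,x_n]]=0$, whence $([x_i,x_j].f).x_n=([x_i,x_j].x_n).f=0$, so $\mathcal M$ centralises $x_n$. For the reverse inclusion take homogeneous $c$ with $c.x_n=0$; by Lemma~\ref{monombycharpc} the letter $x_n$ does not occur in $c$, and all its basis monomials share the same (smallest) second letter $x$. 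If a basis monomial $u=[x_{i_1},x,\dots]$ satisfies $u.x_n=0$, then by Lemma~\ref{zero} adjoining $x_n$ merges the $G_u$-components of $x_{i_1}$ and of $x$, so each of these components contains a vertex of $N$; moving two such vertices $x_a,x_b\in N$ into the first two positions via Lemma~\ref{equality} and anticommutativity gives $u=[x_a,x_b].g\in\mathcal M$. Discarding these, I am left with the monomials $u$ for which $u.x_n\neq 0$.

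The hard part is the behaviour of $c.x_n$ on the surviving monomials. The assignment $u\mapsto u.x_n$ is injective on those whose first-letter component avoids $N$, but it sends \emph{all} monomials whose first-letter component meets $N$ to one and the same basis element of multidegree $\mdeg(c)+\overline{e}_n$; hence $c.x_n=0$ forces the coefficients of the former to vanish and those of the latter merely to sum to zero. This degeneracy is the main obstacle, and I would resolve it with the Jacobi identity
\begin{equation*}
  [x_a,x].x_{a'}-[x_{a'},x].x_a=[x_a,x_{a'}].x\qquad(x_a,x_{a'}\in N),
\end{equation*}
whose right-hand side lies in $\mathcal M$. Bringing a neighbour $x_a\in N$ from the first-letter component into first position (Lemma~\ref{equality}) expresses each remaining monomial as $[x_a,x].g$; writing their sum, whose coefficients add to zero, as a combination of differences $[x_a,x].g-[x_{a'},x].g'$, factoring out the common monomial $g g'/ (x_a x_{a'})$, and applying the identity termwise shows this sum also lies in $\mathcal M$. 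Therefore $c\in\mathcal M$, which gives the remaining inclusion and completes case~3.
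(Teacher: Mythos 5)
Your proposal is correct and takes essentially the same route as the paper's proof: the same reduction to a homogeneous element of $M'(X;G)$, the same use of Lemma~\ref{monombycharpc} to make $x_n$ a cutpoint of $G_{c,x_n}$ (which disposes of cases~1 and~2), and in case~3 the same dichotomy between basis monomials annihilated by $x_n$ individually (rewritten as $[x_a,x_b].g$ with $x_a,x_b$ adjacent to $x_n$ via Lemma~\ref{equality}) and the surviving ones, whose coefficients either vanish or sum to zero by Theorem~\ref{baspcom} and are then absorbed by exactly the Jacobi-identity manipulation of~(\ref{difftoone}). The only difference is organizational --- you sort the monomials one by one by whether their first-letter component meets the neighbourhood of $x_n$, where the paper splits into a global ``former/latter'' case --- so this is the same argument.
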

  \begin{proof}
    Clearly, all elements of the form
(\ref{centralizeriso}) or
 (\ref{centralizerdan}) (depending on the number of vertices adjacent to
    $x_n$) in
    $M(X;G)$ are in
    $C_G(x_n)$. It is also easy to see that if
    $x_n$ is adjacent to
    $x_{r+1},\dots, x_{n-1}$ then any element of the form
    $v=\sum_{k=r+1}^n \alpha_k x_k$ belongs to the centralizer of
    $x_n$. So, we are left to show that the elements of the form
    $\sum_{r+1\leqslant i<j\leqslant n-1}[x_i,x_j].f_{ij}$ (see case 3)
    are in
    $C_G(x_n)$. We need to prove that
    $$\sum_{r+1\leqslant i<j\leqslant n-1}[x_i,x_j].f_{ij}x_n=0.$$
    It follows from Lemma~%
\ref{zero}. Indeed, let us consider the element of the
    form
    $w=[x_i,x_j].f_{ij}x_n$. Since
    $x_i$ and
    $x_j$ are adjacent to
    $x_n$, the vertices
    $x_i$ and
    $x_j$ are in the same connected component of
    $G_w$ and we are done.

    Conversely, suppose that
    $c\in C_G(x_n)$. Let us write this element as a linear combination of basis
    monomials. As we noticed in Sec.~%
\ref{prelim} if a Lie polynomial is equal to zero in
    $M(X;G)$ then all homogeneous components (in the sense of
    multidegree) of this polynomial are equal to zero in
    $M(X;G)$. So, without loss of generality it can be assumed that all basis monomials of
    $c$ have the same multidegree. Consider two cases.\\[-1ex]

    \noindent
    1. If a homogeneous element
    $c\in C_G(x_n)$ is a linear combination of monomials of the length
    $1$, then
    $c=\alpha_i x_i$ for some
    $i$. Consequently,
    $0=[c,x_n]=\alpha_i [x_i,x_n]$. So, it is easy to see that either
    $i=n$ or
    $x_i$ is adjacent to
    $x_n$.\\[-1ex]

    \noindent
    2. Suppose that
    $c\in \mathcal{C}_G(x_n)$. Let us represent
    $c$ as a linear combination of basis elements of the algebra
    $M(X;G)$. Consider the graph
    $G_c$. Let this graph have
    $s+1$ connected components
    ($s\geqslant 1$). By
    $y_0,y_1,y_2,\dots,y_{s}$ denote the largest vertices of the connected
    components as we did in the proof of Lemma~%
\ref{monombycharpc}. Let
    $x$ be the smallest letter appearing in
    $c$. We can suppose that
    $x$ is in the same connected component as
    $y_{0}$. Since
    $[c,x_n]=0$,  Lemma~%
\ref{monombycharpc} implies, that
    $x_n$ is a cutpoint of
    $G_{c,x_n}$.

    Clearly, degree of
    $x_n$ in
    $G$ is not less than
    $2$.  Indeed,
    $x_n$ should  be connected with at least two connected components of
    $G_c$. Therefore, its degree in
    $G_{c,x_n}$ should be greater than
    $1$. So, if the degree of
    $x_n$ in
    $G$ is not greater than
    $1$ then
    $\mathcal{C}(x_n)=0$ and parts 1 and 2 hold.

    Thus,
    $x_n$ is a cutpoint of
    $G_{c,x_n}$. We can assume without loss of generality that one of the connected components contains
    the vertices
    $y_1, \dots y_t$ for
    $t\leqslant s$, and perhaps
    $y_0$. Moreover, one can suppose that
    $y_1,y_2, \dots , y_t$ are adjacent to
    $x_n$, i.e.
    \begin{equation}\label{subset}
       \{y_1,\dots, y_t\}\subseteq \{x_{r+1},\dots,x_n\}.
    \end{equation}
    We can also assume that if
    $y_0$ is in the same connected component of
    $G_{c,x_n}$ as the vertices
    $y_1,\dots,y_t$ then
    $y_0$ is also adjacent to
    $x$.

    Since the representation of
    $c$ in the form
(\ref{basedecom}) holds we obtain
    \begin{equation}\label{productxn}
      \begin{split}\\
        [c,x_n] &=\sum_{i=1}^s \beta_i[y_i,x,x_{i_3},\dots,x_{i_m},x_n]\\
          &= \sum_{i=1}^t \beta_i [x_n,x,x_{i_3},\dots,x_{i_m},y_i]+
             \sum_{j=t+1}^s \beta_j[y_j,x,x_{j_3},\dots,x_{j_m},x_n].
      \end{split}
    \end{equation}
    By Theorem~%
\ref{baspcom}, the monomials
    $[y_j,x,x_{j_3},\dots,x_{j_m},x_n]$ are linearly independent for
    $t+1 \leqslant j\leqslant s$. By the same theorem,  we see
    that the first sum of the right hand side
(\ref{productxn}) is equal to a multiple of a monomial of the
    corresponding multidegree beginning with
    $x_n$ which is either equal to zero in
    $M(X;G)$ or a basis element. In the latter case, this element is linearly independent from the elements of the
    second sum. Therefore, in both cases
    $\beta_j=0$ if
    $t+1 \leqslant j\leqslant s$. By
(\ref{basedecom}) we obtain
    \begin{equation}\label{productsimp}
        c = \sum_{i=1}^t \beta_i [y_i,x,x_{i_3},\dots,x_{i_m}].
    \end{equation}
    Consequently,
    \begin{equation}\label{productxnsimp}
        [c,x_n] = \sum_{i=1}^t \beta_i [y_i,x,x_{i_3},\dots,x_{i_m},x_n].
    \end{equation}

    In the former case, we obtain
    $x_n$ is in the same connected component of
    $G_{c,x_n}$ as
    $x$ and
    $y_0$. In this case, by Lemma~%
\ref{zero} we have
    $[y_i,x,x_{i_3},\dots,x_{i_m},x_n]=0$ for
    $i\leqslant t$. On the other hand,
    Lemma~
\ref{equality} implies that
    $[y_i,x,x_{i_3},\dots,x_{i_m},x_n]=[y_i,y_0,x'_{j_3},\dots,x'_{i_m},x'_{i_{m+1}}]$ and
    $y_0$ and
    $y_i$ are adjacent to
    $x_n$. So, as follows from
(\ref{subset}),
   $c$ can be represented in the form
 (\ref{centralizer}).

    In the latter case, we have
    $y_0$ and
    $x_n$ are in different connected components of
    $G_{c,x_n}$. By
(\ref{productxnsimp}), we obtain the following equation in
    $M(X;G)$:
    \begin{equation*}
        0=[c,x_n] = \sum_{i=1}^t \beta_i [x_n,x,x_{i_3},\dots,x_{i_m},y_i].
    \end{equation*}
    Let us arrange the last
    $m-1$ multiples in each summand of the right-hand side of this equation in the increasing
    order. We obtain a linear combination of
    $t$ equal monomials which are not equal to zero in
    $M(X;G)$. Since
    $[c,x_{n}]=0$ we obtain
    $\sum_{i=1}^t \beta_i=0$.
    So, the following equation holds
    \begin{equation}\label{sumofdif}
      c=\sum_{i=1}^{t-1}\beta_i([y_i,x,x_{i_3},\dots,x_{i_m}]-[y_t,x,x_{s_3},\dots,x_{s_m}]).
    \end{equation}

    For each summand of the right-hand side of
(\ref{sumofdif}), we can apply the Lie algebra identities and
    rearrange  the last
    $m-2$ multiples. So, we obtain
    \begin{equation}\label{difftoone}
      \begin{split}
        &[y_i,x,x_{i_3},\dots,x_{i_m}]-[y_t,x,x_{t_3},\dots,x_{t_m}]\\
        =& [y_i,x,y_t,x'_4\dots,x'_m]-[y_t,x,y_i,x'_4\dots,x'_m]\\
        =& [y_i,x,y_t,x'_4\dots,x'_m]+[x,y_t,y_i,x'_4\dots,x'_m]\\
        =& -[y_t,y_i,x,x'_4\dots,x'_m]\\
        =&[y_i,y_t,x,x'_4\dots,x'_m]
      \end{split}
    \end{equation}
     By
(\ref{sumofdif}) and
 (\ref{difftoone}), we get
   $$c=\sum_{i=1}^{t-1} \beta_{i} [y_i,y_t,x,x'_4\dots,x'_m].$$
   Therefore,
   $c$ can be represented in the form
 (\ref{centralizer}). This completes the proof of part~3.
   \end{proof}

   Now we are ready to describe centralizers of linear
   combinations of the elements in
   $X$. The following lemma holds:

   \begin{llll}\label{centrintl}
     Let
     $c=c_0+\Delta c$, where
     $c_0$ is a homogeneous component of
     $c$ having the largest multidegree.  If
     $c\in \mathcal{C}\bigl(\sum_{j=1}^l \alpha_j x_{i_j}\bigr)$, where all
     $\alpha_j$ are not equal to
     $0$, then
     $c_0\in \bigcap_{j=1}^l \mathcal{C}\bigl(x_{i_j}\bigr)$.
   \end{llll}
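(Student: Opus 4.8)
The plan is to rewrite the centralizer condition as a module equation and then to peel off the generators $x_{i_j}$ one at a time, from the largest index downwards, using the explicit description of $\mathcal{C}_G(x_{i_j})$ from Theorem~\ref{centrgen} to annihilate all cross terms.

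First I would reformulate. Since $c\in M'(X;G)$, the hypothesis $c\in\mathcal{C}\bigl(\sum_{j=1}^l\alpha_j x_{i_j}\bigr)$ reads $\sum_{j=1}^l\alpha_j[c,x_{i_j}]=c.L=0$, where $L=\sum_j\alpha_j x_{i_j}\in R[X]$ is a linear polynomial acting on the $R[X]$-module $M'(X;G)$. I would relabel the summands so that $i_1<i_2<\dots<i_l$, write $c=\sum_{\overline\delta}c_{\overline\delta}$ as a sum of homogeneous components, and let $\overline\gamma$ be the (unique, since the order is total) largest multidegree occurring, so that $c_0=c_{\overline\gamma}$; being a homogeneous component of an element of $M'(X;G)$, $c_0$ again lies in $M'(X;G)$. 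For a nonzero homogeneous $c_{\overline\delta}$ the element $c_{\overline\delta}.x_{i_j}$ is either $0$ or homogeneous of multidegree $\overline\delta+e_{i_j}$ (here $e_k$ is the $k$-th standard basis vector of $\mathbb{Z}^n$). Collecting $c.L=0$ by multidegree, which is legitimate by the homogeneity remark in Sec.~\ref{prelim}, I obtain for every multidegree $\overline\mu$ the relation
\begin{equation*}
  \sum_{\substack{j,\ \overline\delta\in S\\ \overline\delta+e_{i_j}=\overline\mu}}\alpha_j\,c_{\overline\delta}.x_{i_j}=0,
\end{equation*}
where $S$ denotes the set of multidegrees occurring in $c$.

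The heart of the argument is a downward induction on $j_0=l,l-1,\dots,1$ establishing $c_0\in\mathcal{C}(x_{i_{j_0}})$. Assume $c_0\in\mathcal{C}(x_{i_j})$ for all $j>j_0$ (vacuous when $j_0=l$). The decisive observation is that, by Theorem~\ref{centrgen}, every element of $\mathcal{C}_G(x_{i_j})=C_G(x_{i_j})\cap M'(X;G)$ is a combination of monomials $[x_p,x_q].f_{pq}$ with $x_p,x_q$ adjacent to $x_{i_j}$ and $f_{pq}\in R[X\setminus\{x_{i_j}\}]$ (the linear summand of (\ref{centralizer}) drops out because $c_0\in M'$); in all three cases of Theorem~\ref{centrgen} this forces $x_{i_j}$ not to occur in $c_0$, i.e.\ $\gamma_{i_j}=0$ for every $j>j_0$. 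Now I apply the graded relation at $\overline\mu=\overline\gamma+e_{i_{j_0}}$. A pair $(\overline\delta,j)$ contributes iff $\overline\delta=\overline\gamma+e_{i_{j_0}}-e_{i_j}\in S$. For $i_j<i_{j_0}$ this $\overline\delta$ exceeds $\overline\gamma$ in the standard order (their top differing coordinate is $i_{j_0}$, where $\overline\delta$ is larger), so $\overline\delta\notin S$; for $j>j_0$ the $i_j$-coordinate of $\overline\delta$ equals $\gamma_{i_j}-1=-1$ by the induction hypothesis, so $\overline\delta$ is not a multidegree at all. The only surviving pair is $(\overline\gamma,j_0)$, whence $\alpha_{j_0}\,c_0.x_{i_{j_0}}=0$; since $R$ is an integral domain and $M(X;G)$ is $R$-free by Theorem~\ref{baspcom}, multiplication by $\alpha_{j_0}\neq0$ is injective, so $c_0.x_{i_{j_0}}=[c_0,x_{i_{j_0}}]=0$, i.e.\ $c_0\in\mathcal{C}(x_{i_{j_0}})$.

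Running this from $j_0=l$ down to $j_0=1$ gives $c_0\in\bigcap_{j=1}^l\mathcal{C}(x_{i_j})$. The main obstacle is exactly the possible cross terms $c_{\overline\gamma+e_{i_{j_0}}-e_{i_j}}.x_{i_j}$ with $i_j>i_{j_0}$: grading alone leaves them in the relation, and it is only the structural input of Theorem~\ref{centrgen}, namely that centralizing $x_{i_j}$ forbids $x_{i_j}$ from appearing in $c_0$, that eliminates them and lets the induction close. I would also verify the boundary case $j_0=l$ (the hypothesis is then empty, but the same multidegree computation isolates $\alpha_l c_0.x_{i_l}=0$) and the degenerate situations in which some $x_{i_j}$ has degree $\leqslant1$, where $\mathcal{C}_G(x_{i_j})=0$ forces $c_0=0$ and the claim is immediate.
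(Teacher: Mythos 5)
Your proof is correct and follows essentially the same strategy as the paper's: both isolate $[c_0,x_{i_j}]$ for the largest remaining generator by noting that its multidegree (that of $c_0$ raised in the $i_j$-th coordinate) cannot be matched by any other summand of the relation, then deduce that $x_{i_j}$ does not occur in $c_0$ and iterate downwards through the generators. The only cosmetic differences are that the paper eliminates each already-handled generator via the projection $\pi_{X'}$ rather than via your explicit graded bookkeeping, and that it invokes Lemma~\ref{monombycharpc} instead of Theorem~\ref{centrgen} for the fact that an element of $\mathcal{C}(x_{i_j})$ cannot contain $x_{i_j}$.
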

   \begin{proof}
     To be definite assume that
     $$\displaystyle{c\in \mathcal{C}\bigl(\sum_{j=k+1}^n \alpha_j x_j\bigr)}.$$
     So,
     \begin{equation}\label{decommaxmdeg}
       \begin{split}
       0 & = c.\bigl(\sum _{j=k+1}^n \alpha_j x_{j}\bigr)\\
         & = \sum_{j=k+1}^n \alpha_j [c,x_j]\\
               & = \sum_{j=k+1}^{n-1} \alpha_j [c_0,x_j]+ \alpha_{n} [c_0, x_{n}]+
               \sum_{j=k+1}^n \alpha_j [\Delta c,x_j].
       \end{split}
     \end{equation}
     Moreover,
     $[c_0,x_{n}]=0$. Indeed, otherwise, the basis monomials appearing in
     the decomposition of
     $[c_0,x_{n}]$ have the greatest multidegree in the decomposition of
     $c.\bigl(\sum _{j=k+1}^n \alpha_j x_{j}\bigr)$. So, they cannot cancel with other summands. Therefore,
     $c_{0}\in \mathcal{C}(x_{n})$.

     By Lemma~%
\ref{monombycharpc},
     $c_0$ does not contain
     $x_n$ in it, neither does
     $c$. Let
     $X'=X\backslash\{x_n\}$ and let
     $G'$ be the subgraph of
     $G$ generated by
     $X'$. Consider the projection
     $\pi_{X'}$. This projection maps
     $c$ identically to
     $M(X';G')$. Arguing as above, we see that
     $c_0\in \mathcal{C}(x_{n-1})$, and so on. Finally, we get
     $c_0\in \bigcap_{j=k+1}^n \mathcal{C}(x_j)$, that concludes the proof.
   \end{proof}

   \begin{ttt}\label{centinter}
     Let
     $M(X;G)$ be a partially commutative metabelian Lie algebra,
     where
     $X=\{x_1,x_2,\dots,x_n\}$. Then
     $$\mathcal{C}\bigl(\sum_{j=1}^m \alpha_{i_j} x_{i_j}\bigr)=\bigcap_{j=1}^{m} \mathcal{C}(x_{i_j})$$
     for any elements
     $x_{i_1},x_{i_2},\dots, x_{i_m}$ and for any
     $\alpha_{i_1},\alpha_{i_2},\dots,\alpha_{i_m}\in R\backslash\{0\}$.
   \end{ttt}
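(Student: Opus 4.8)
The plan is to prove the two inclusions separately, the nontrivial direction being $\mathcal{C}\bigl(\sum_{j=1}^m \alpha_{i_j} x_{i_j}\bigr) \subseteq \bigcap_{j=1}^m \mathcal{C}(x_{i_j})$.

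The inclusion $\bigcap_{j=1}^m \mathcal{C}(x_{i_j}) \subseteq \mathcal{C}\bigl(\sum_{j=1}^m \alpha_{i_j} x_{i_j}\bigr)$ is immediate from bilinearity of the Lie bracket: if $c$ commutes with every $x_{i_j}$, then $[c, \sum_j \alpha_{i_j} x_{i_j}] = \sum_j \alpha_{i_j}[c, x_{i_j}] = 0$, so $c$ centralizes the sum. This requires nothing beyond the definitions.

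For the reverse inclusion, I would take $c \in \mathcal{C}\bigl(\sum_{j=1}^m \alpha_{i_j} x_{i_j}\bigr)$ and argue by induction on the number of nonzero homogeneous components in the decomposition of $c$ as a linear combination of basis elements; this number is finite since $c$ lies in $M'(X;G)$. Write $c = c_0 + \Delta c$, where $c_0$ is the homogeneous component of largest multidegree. Because all coefficients $\alpha_{i_j}$ are nonzero, Lemma~\ref{centrintl} applies and yields $c_0 \in \bigcap_{j=1}^m \mathcal{C}(x_{i_j})$; in particular $c_0$ centralizes the whole sum, so $[c_0, \sum_j \alpha_{i_j} x_{i_j}] = 0$. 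Subtracting, $[\Delta c, \sum_j \alpha_{i_j} x_{i_j}] = [c, \sum_j \alpha_{i_j} x_{i_j}] - [c_0, \sum_j \alpha_{i_j} x_{i_j}] = 0$, so $\Delta c$ again lies in $\mathcal{C}\bigl(\sum_{j=1}^m \alpha_{i_j} x_{i_j}\bigr)$ but has strictly fewer homogeneous components. The inductive hypothesis gives $\Delta c \in \bigcap_j \mathcal{C}(x_{i_j})$, and hence $c = c_0 + \Delta c \in \bigcap_j \mathcal{C}(x_{i_j})$. The base case, where $c$ is itself homogeneous, is exactly the content of Lemma~\ref{centrintl}.

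The whole argument rests on the ability to peel off the top homogeneous component and still remain inside the centralizer of the sum; this works precisely because $c_0$ is forced into the intersection of the \emph{individual} centralizers, not merely the centralizer of the sum, so that subtracting it preserves membership in $\mathcal{C}\bigl(\sum_j \alpha_{i_j} x_{i_j}\bigr)$. I expect no genuine obstacle here: the essential work was already isolated in Lemma~\ref{centrintl}, and this theorem is just its clean bootstrapping from the leading component to the full element, with finiteness of the multidegree decomposition guaranteeing that the induction terminates.
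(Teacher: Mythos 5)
Your proposal is correct and follows essentially the same route as the paper: both establish the easy inclusion by bilinearity and then handle the reverse by peeling off the homogeneous component of largest multidegree via Lemma~\ref{centrintl}, observing that the remainder still centralizes the sum, and iterating (your explicit induction on the number of homogeneous components is just a cleaner phrasing of the paper's ``and so on''). No gaps.
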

   \begin{proof}
     It is sufficient to show that
     $\mathcal{C}\bigl(\sum_{j=k+1}^n \alpha_{j} x_{j}\bigr)$, because otherwise, we can just renumber
     the vertices as we did in the proof of Lemma~%
\ref{centrintl}.

     The inclusion
     $$\bigcap_{j=1}^{m} \mathcal{C}(x_{i_j})\subseteq\mathcal{C}\bigl(\sum_{j=1}^m \alpha_{i_j} x_{i_j}\bigr)$$
     is obvious. So, we are left to show that
     $$\mathcal{C}\bigl(\sum_{j=k+1}^n \alpha_{j} x_{j}\bigr)\subseteq \bigcap_{j=k+1}^{n} \mathcal{C}(x_{j}).$$

     Let
     $c\in \mathcal{C}\bigl(\sum_{j=k+1}^n \alpha_{j} x_j\bigr)$.
     Represent
     $c$ as a linear combination of elements in
     $\mathfrak{B}(X;G)$. We obtain
     $c=c_0+\Delta c$, where
     $c_{0}$ is a homogeneous component of
     $c$ having the largest multidegree. By Lemma~%
\ref{centrintl},
     $c_0\in \bigcap_{j=k+1}^n \mathcal{C}(x_j)$. So, we obviously have
     $c_0\in \mathcal{C}\bigl(\sum_{j=k+1}^n \alpha_j x_j\bigr)$. Consequently,
     $\Delta c=c-c_0\in \mathcal{C}\bigl(\sum_{j=k+1}^n \alpha_j x_j\bigr)$ and the decomposition of
     $\Delta c$ to the sum of homogeneous components has one summand less than the similar decomposition of
     $c$. We can apply Lemma~%
\ref{centrintl} for
     $\Delta c$ and so on. After finitely many steps we obtain each homogeneous component of
     $c$ belongs to
     $\bigcap_{j=k+1}^n \mathcal{C}(x_j)$, so does their sum. Since this sum is equal to
     $c$ we are done.
   \end{proof}

   In Sec.~
\ref{univequiv}, we consider partially commutative Lie algebras
   whose defining graphs are trees. Let us prove the following theorem about
   centralizers in this case.

   \begin{ttt}\label{centlincomb}
     Let
     $G$ be a tree and let
     $\alpha_j \in R\backslash\{0\}$ for
     $j=1,2\dots, m$, where
     $m\geqslant 2$. Then
     $\mathcal{C}\left(\sum_{j=1}^{m}\alpha_j x_{i_j}\right)=0$.
   \end{ttt}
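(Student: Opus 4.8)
The plan is to reduce to a two-generator centralizer, then to a single homogeneous element, and finally to turn the algebraic hypothesis into a forbidden cycle in the tree $G$. By Theorem~\ref{centinter} we have $\mathcal C\bigl(\sum_{j=1}^m\alpha_j x_{i_j}\bigr)=\bigcap_{j=1}^m\mathcal C(x_{i_j})$, and since $m\geqslant 2$ this is contained in $\mathcal C(x_a)\cap\mathcal C(x_b)$ for two distinct generators $x_a=x_{i_1}$, $x_b=x_{i_2}$. So it is enough to prove $\mathcal C(x_a)\cap\mathcal C(x_b)=0$ whenever $x_a\neq x_b$. As in Section~\ref{prelim}, the relations $[c,x_a]=[c,x_b]=0$ pass to every homogeneous component of $c$ (components of distinct multidegree cannot cancel), so I may take $c$ to be a nonzero homogeneous element of $M'(X;G)$ and aim for a contradiction.

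Fix such a $c$. By Lemma~\ref{monombycharpc} (with $z=x_a$ and with $z=x_b$) neither $x_a$ nor $x_b$ occurs in $c$, so $x_a,x_b\notin X_c$, and I reorder $X$ so that $x_a$ and $x_b$ exceed every vertex of $X_c$. Writing $c$ in the basis $\mathfrak B(X;G)$, homogeneity forces all monomials to share the same smallest letter $x$ (their common second letter); by the construction preceding Theorem~\ref{baspcom} the basis monomials of multidegree $\mdeg(c)$ correspond bijectively to the connected components $C$ of $G_c$ other than the component $\Gamma_0$ containing $x$, the component $C$ giving $v_C=[\hat y_C,x,\dots]$ with $\hat y_C$ its largest vertex. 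Thus $c=\sum_C\beta_C v_C$ with the $v_C$ linearly independent.

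The key point I would establish is an adjacency claim: if $\beta_C\neq 0$ then $C$ is adjacent in $G$ to both $x_a$ and $x_b$. Indeed, because $x_a$ is larger than every letter of $c$, the product $v_C.x_a=[\hat y_C,x,\dots,x_a]$ has support $X_c\cup\{x_a\}$; by Lemma~\ref{zero} it vanishes in $M(X;G)$ precisely when $\hat y_C$ and $x$ fall into the same component of $G_{c,x_a}$, and otherwise (using Lemma~\ref{equality}) it reduces to the basis element indexed by the $G_{c,x_a}$-component of $C$. If $C$ were not adjacent to $x_a$ it would remain an isolated component of $G_{c,x_a}$, so $v_C.x_a$ would be a nonzero basis element that no other $v_{C'}.x_a$ can produce; its coefficient in $c.x_a=0$ is then exactly $\beta_C$, forcing $\beta_C=0$. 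The same computation with $x_b$ gives adjacency to $x_b$, proving the claim.

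Finally I would exhibit two distinct components of $G_c$ each adjacent to both $x_a$ and $x_b$. If at least two coefficients $\beta_C$ are nonzero the adjacency claim furnishes them immediately. If exactly one is nonzero, say $\beta_{C_0}$, then $c=\beta_{C_0}v_{C_0}$ and $v_{C_0}.x_a=0$, so by Lemma~\ref{zero} the unique $G$-path from $\hat y_{C_0}$ to $x$ passes through $x_a$ inside $X_c\cup\{x_a\}$; hence $x_a$ is adjacent both to $C_0$ and to $\Gamma_0$, and symmetrically so is $x_b$, so $C_0$ and $\Gamma_0$ are the required pair. In either case, picking vertices of the two components adjacent to $x_a$ and to $x_b$ and joining them through these (connected and disjoint) components produces a closed walk $x_a\to C\to x_b\to C'\to x_a$ whose vertices are all distinct, since $x_a\neq x_b$ lie outside $X_c$ and the two components are disjoint. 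This is a cycle in $G$, contradicting that $G$ is a tree, so $c=0$. I expect the main obstacle to be the bookkeeping in the adjacency claim --- tracking which monomials survive in $M(X;G)$ after multiplication and isolating the single-monomial case --- after which the cycle argument is a short graph-theoretic observation.
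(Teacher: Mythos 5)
Your proof is correct, and while its skeleton matches the paper's --- reduce to two generators via Theorem~\ref{centinter}, pass to a homogeneous $c$, use Lemma~\ref{monombycharpc} to see that $x_a,x_b\notin X_c$, and end by producing a cycle that contradicts $G$ being a tree --- the decisive middle step is genuinely different. The paper invokes part~3 of Theorem~\ref{centrgen} twice, writing $c$ both as $\sum[x_i,x_j].f_{ij}$ over pairs adjacent to $x_s$ and as $\sum[x_p,x_q].g_{pq}$ over pairs adjacent to $x_t$, rewrites both in the basis of Theorem~\ref{baspcom} so that the leading letters are neighbours of $x_s$ (respectively $x_t$) lying outside the distinguished component $\Gamma$, and concludes from the disjointness of the two resulting monomial sets that every coefficient vanishes. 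You never use Theorem~\ref{centrgen}: you expand $c=\sum_C\beta_C v_C$ over the components of $G_c$, compute $c.x_a$ and $c.x_b$ directly in the basis (having reordered $X$ so that $x_a,x_b$ are maximal, which makes $v_C.x_a$ already basis-shaped), and read off the coefficient of $[\hat{y}_C,x,\dots,x_a]$ to get your adjacency claim; the cycle then comes from two components each adjacent to both $x_a$ and $x_b$. This buys a more self-contained argument --- only Lemmas~\ref{equality}, \ref{zero}, \ref{monombycharpc} and Theorems~\ref{baspcom}, \ref{centinter} are needed --- and it absorbs the endpoint case that the paper dispatches separately through Theorem~\ref{centrgen}; the paper's route, in exchange, reuses an already-established structure theorem and is shorter on the page. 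Two small points to make explicit when you write this up: in the ``exactly one nonzero coefficient'' case you need $G_c$ to have at least two components so that $C_0\neq\Gamma_0$, which is guaranteed by Lemma~\ref{zero} for a nonzero homogeneous $c\in M'(X;G)$ (as noted in the proof of Lemma~\ref{monombycharpc}); and the conclusion $v_{C_0}.x_a=0$ from $\beta_{C_0}v_{C_0}.x_a=0$ uses only that $\mathfrak{B}(X;G)$ is a free $R$-module basis, so no extra hypothesis on $R$ sneaks in.
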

   \begin{proof}
     Since
     $\mathcal{C}\left(\sum_{j=1}^{m}\alpha_j x_{i_j}\right)\subseteq
      \mathcal{C}\left(\sum_{j=1}^{m-1}\alpha_j x_{i_j}\right)$,
     it is sufficient to show that
     $\mathcal{C}(\alpha x_s+\beta x_t)=0$, if
     $\alpha,\beta \neq 0$ and
     $s\neq t$.

     If either
     $x_s$ or
     $x_t$ is an endpoint, then Theorem~%
\ref{centrgen} implies that the corresponding
     centralizer is trivial and the statement holds obviously. So, we are left
     to consider the case when neither vertex is an endpoints.

     Let
     $c\in \mathcal{C}(\alpha x_s+\beta x_t)$. By the proof of Theorem
\ref{centinter}, we may assume that
     $c$ is a homogeneous element of
     $M(X;G)$. By Lemma~%
\ref{centrintl}, neither
     $x_s$, nor
     $x_t$ appears in
     $c$. Therefore, these vertices are not in
     $G_c$.

     By
     $\widetilde{X}$ denote the set
     $X_c\cup\{x_s,x_t\}$. Let
     $\widetilde{G}$ be the subgraph of
     $G$ generated by
     $\widetilde{X}$. It is obvious that
     $\widetilde{G}$ is a forest.

     Consider the graph
     $G_c$. This graph is also a forest. Moreover, all vertices adjacent to
     $x_s$ are in different connected components of
     $G_c$. Indeed, let
     $y$ and
     $z$ be adjacent to
     $x_s$ and belong to the same connected component of
     $G_c$. Then there is a simple path
     $(y,w_1,w_2,\dots, w_l,z)$ in this graph. However, since
     $x_s$ is not in
     $G_c$ we obtain the cycle
     $(x_s,y,w_1,w_2,\dots, w_l,z,x_s)$ in
     $G$. This is a contradiction because
     $G$ is a tree. It is obvious that the assertion also holds for the vertices adjacent to
     $x_t$.

     Let us show that there is at most one connected component
     $\Gamma$ of
     $G_c$ satisfying the following property: there are vertices
     $z,z'\in \Gamma$ such that
     $z$ is adjacent to
     $x_s$ and
     $z'$ is adjacent to
     $x_t$. Indeed, if there are at least two such components (denote two of them as
     $\Gamma_1$ and
     $\Gamma_2$), then there exists a cycle going from
     $x_s$ to
     $x_t$ by vertices of
     $\Gamma_1$ and then, backward, from
     $x_t$ to
     $x_s$ by vertices of
     $\Gamma_2$. Since
     $G$ is a tree, we get a contradiction.

     So,
     $G_c$ has at most one connected component having  a vertex adjacent to
     $x_s$ as well as a vertex adjacent to
     $x_t$. If
     $\Gamma$ exists, we can renumber the vertices of
     $G$ in such a way that the minimal vertex (denote it by
     $x$) is in
     $\Gamma$ and each vertex adjacent to either
     $x_s$ or
     $x_t$ and not belonging to
     $\Gamma$ is the largest in its connected component. If there is no component
     $\Gamma$ then any vertex of
     $G_c$ can be chosen as
     $x$.

     Theorem~%
\ref{centinter} implies that
     $c\in \mathcal{C}(x_s)\cap \mathcal{C}(x_t)$. By Theorem~%
\ref{centrgen}, we obtain
     \begin{equation}\label{first}
       c=\sum_{\{x_i,x_s\},\{x_j,x_s\}\in G_{c}}[x_i,x_j].f_{ij}
     \end{equation}
     and
     \begin{equation}\label{second}
       c=\sum_{\{x_p,x_t\},\{x_q,x_t\}\in G_{c}}[x_p,x_q].g_{pq}
     \end{equation}
     where
     $f_{ij},g_{pq}\in R[X_c]$ are such that the multidegree of each monomial in both
     equations is equal to
     $\mdeg(c)$.  If
     $x=x_i$ in the monomial
     $[x_i,x_j].f_{ij}$ of the right-hand side of
(\ref{first}), then it can be written in the form
     \begin{equation}\label{interchange}
       [x_i,x_j].f_{ij}=-[x_j,x].f_{ij}.
     \end{equation}
     If
     $x\neq x_i,x_j$, then put the smallest letter in each monomial to the third position.
     After that, we can transform each such summand in the right-hand side of
(\ref{first}) as follows:
     \begin{equation}\label{basdecompose}
       \begin{split}
         [x_i,x_j].f_{ij}&=\sum_{l} \alpha_l [x_i,x_j,x,y_1,\dots,y_m]\\
         &= \sum_{l}\alpha_l ([x_i,x,x_j,y_1,\dots,y_m]-[x_j,x,x_i,y_1,\dots,y_m])
       \end{split}
     \end{equation}

     Substituting
(\ref{interchange}) and
 (\ref{basdecompose}) to
 (\ref{first}) and combining like terms we obtain a linear combination of basis monomials
     such that the first letter of each of them is adjacent to
     $x_s$ and does not belong to
     $\Gamma$  (if there is the component
     $\Gamma$ in
     $G_c$). Indeed, if, for instance,
     $x_i$ and
     $x$ belong to the same connected component of
     $G_c$ then
     $[x_i,x,x_j,y_1,\dots,y_m]=0$. It means that this summand is not in the basis of
     $M(X;G)$ and we can exclude it from
     the obtained linear combination.
     Let us rewrite each summand in the right-hand side of
(\ref{second}) in a similar way. We obtain a linear combination of
     basis elements, each of which begins with a letter adjacent to
     $x_t$ and not belonging to
     $\Gamma$. Since no vertices outside
     $\Gamma$ can be adjacent to both
     $x_s$ and
     $x_t$ the obtained linear combinations have no equal monomials. Since the set of
     monomials in
(\ref{first}) and
 (\ref{second}) is linearly independent, all coefficients by
     the basis elements in these linear combinations are equal to zero.
     Therefore,
     $c=0$ and the proof is complete.
   \end{proof}

   \section{Universal equivalence}\label{univequiv}
   In this section,
   $M(X;T)$ denotes a partially commutative metabelian Lie ring (i.e. a partially commutative metabelian
   Lie algebra over the ring of integers
   $\mathbb{Z}$) whose defining graph is a tree.

   As above, let
   $X=\{x_1,x_2,\dots x_n\}$. Denote by
   $x$ the smallest vertex in
   $X$ (namely the vertex
   $x_1$).

   We need some auxiliary technical results to prove the main theorem.

   \begin{llll}\label{visvert}
     Let
     $T_1=\langle X,E_1 \rangle$ and
     $T_2=\langle Y,E_2 \rangle$ be trees. If the algebras
     $M(X;T_1)$ and
     $M(Y;T_2)$ are universally equivalent and at least two
     vertices of
     $T_1$ are not endpoints, then at least two vertices of
     $T_2$ are not endpoints either.
   \end{llll}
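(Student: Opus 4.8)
The plan is to translate the graph condition into an algebraic one and then exploit that universal equivalence coincides with existential equivalence, so that it suffices to produce a single $\exists$-sentence. First I would record the algebraic meaning of ``non-endpoint'': by Theorem~\ref{centrgen}, a vertex $v$ of a tree is a non-endpoint if and only if $\mathcal{C}(v)=C_G(v)\cap M'(X;T)$ is nonzero (an isolated vertex or an endpoint gives $\mathcal{C}(v)=0$, whereas a vertex of degree $\ge 2$ has $\mathcal{C}(v)\neq 0$). On the graph side I would note that a tree has at least two non-endpoints if and only if it has two \emph{adjacent} non-endpoints: among all pairs of internal vertices pick one of minimal distance, and if they were non-adjacent an intermediate vertex of the connecting path would be a nearer internal vertex. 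Dually, a tree with at most one non-endpoint is a star $K_{1,m}$. Thus the lemma amounts to showing that no $M(X;T_1)$ with two adjacent internal vertices can be universally equivalent to any $M(Y;T_2)$ with $T_2$ a star.

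Next I would exhibit the distinguishing $\exists$-sentence. Using two adjacent internal vertices $p,q$ of $T_1$, choose a neighbour $a$ of $p$ with $a\neq q$ and a neighbour $b$ of $q$ with $b\neq p$; since $T_1$ is a tree, $a,q$ are non-adjacent and $b,p$ are non-adjacent. Set $\xi=[q,a]$ and $\eta=[p,b]$ and take
\[
  \Phi:\ \exists p,q,s,t,s',t'\ \bigl([s,t]\neq 0\wedge [s',t']\neq 0\wedge [p,q]=0\wedge [[s,t],p]=0\wedge [[s,t],q]\neq 0\wedge [[s',t'],q]=0\wedge [[s',t'],p]\neq 0\bigr).
\]
Presenting the two centralizing elements as the brackets $[s,t]$ and $[s',t']$ is the decisive device, since it forces them into $M'$ while keeping $\Phi$ existential. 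To verify $\Phi$ in $M(X;T_1)$ I would take $s=q,t=a,s'=p,t'=b$ and check each clause with Theorem~\ref{ann}: the unique $q$--$a$ path is $q,p,a$, so $I^{T_1}_{q,a}=(p)$, giving $[\xi,p]=[q,a].p=0$ and $[\xi,q]=[q,a].q\neq 0$; symmetrically $I^{T_1}_{p,b}=(q)$ yields $[\eta,q]=0$ and $[\eta,p]\neq 0$; finally $\xi,\eta\neq 0$ by Lemma~\ref{zero} and $[p,q]=0$ by adjacency.

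The main work, which I expect to be the real obstacle, is showing $\Phi$ fails in every star $M(Y;T_2)$, since the quantifiers range over all algebra elements, not just generators. Here I would use a linear-part reduction: write $p=p_1+p_2$, $q=q_1+q_2$ with $p_1,q_1$ in the span of the generators and $p_2,q_2\in M'$. As $\xi=[s,t]\in M'$ and $M''=0$, we get $[\xi,p]=[\xi,p_1]$ and $[\xi,q]=[\xi,q_1]$, and similarly for $\eta$, so the clauses read $\xi\in\mathcal{C}(p_1)\setminus\mathcal{C}(q_1)$ and $\eta\in\mathcal{C}(q_1)\setminus\mathcal{C}(p_1)$, both nonzero. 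If $p_1=0$ then $[\eta,p]=[\eta,p_2]=0$ (both in $M'$), contradicting $[\eta,p]\neq 0$; hence $p_1\neq 0$, and likewise $q_1\neq 0$. By Theorem~\ref{centinter}, $\mathcal{C}(p_1)$ is the intersection of the $\mathcal{C}(x_i)$ over the generators $x_i$ occurring in $p_1$, so $\mathcal{C}(p_1)\neq 0$ forces every such generator to be a non-endpoint of $T_2$; since $T_2$ is a star its only non-endpoint is the centre $o$, whence $p_1=\alpha o$ and $q_1=\beta o$ with $\alpha,\beta\neq 0$. Because $M(Y;T_2)$ has the basis $\mathfrak{B}(Y;T_2)$ (Theorem~\ref{baspcom}) it is torsion-free as an abelian group (cf.\ Theorem~\ref{torsfree}), so $\mathcal{C}(\alpha o)=\mathcal{C}(o)=\mathcal{C}(\beta o)$; therefore $\xi\in\mathcal{C}(q_1)$ and $[\xi,q]=[\xi,q_1]=0$, contradicting $[\xi,q]\neq 0$. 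Thus $\Phi$ fails in every star. Since $\Phi$ is an $\exists$-sentence and $M(X;T_1)$, $M(Y;T_2)$ are existentially equivalent, $\Phi$ must hold in $M(Y;T_2)$, so $T_2$ is not a star and has at least two non-endpoints, as required.
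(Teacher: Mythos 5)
Your proof is correct, but it takes a genuinely different route from the paper's own proof of this lemma. Both arguments start from the same combinatorial reduction (two non-endpoints of a tree force two adjacent ones, and a tree with at most one non-endpoint is a star), but the distinguishing $\exists$-sentences differ. The paper takes the four vertices of an induced path on $z_1,z_2,z_3,z_4$ and asserts the existence of four elements whose commutation pattern is exactly that path; the refutation in the star $M(Y;T_2)=M(Y')\oplus\mathcal{L}(y_m)$ uses only Shmelkin's Theorem~\ref{freeproduct}, which makes commutation transitive on $M(Y')$ modulo the derived subalgebra, so no such pattern can occur. Your sentence instead encodes, via the device of writing the centralizing elements as brackets $[s,t]$ and $[s',t']$, the existence of two derived elements with distinct nontrivial centralizers; you verify it through the annihilator Theorem~\ref{ann} and refute it in the star through the centralizer Theorems~\ref{centrgen} and~\ref{centinter}. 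This is heavier machinery than the paper needs here, but it is essentially the $k=2$ instance of the formula $\Phi(T)$ of Lemma~\ref{charformula} together with the witness analysis from the first half of the proof of Theorem~\ref{main} (where $z_i=\beta_i y_{k_i}+c_i$ is extracted in just this way), so your route in effect folds Lemma~\ref{visvert} into the main argument. One point you leave implicit: the degenerate stars on one or two vertices give abelian algebras, where your $\Phi$ fails already at the clause $[s,t]\neq 0$, so the star case with a genuine centre is indeed the only one needing work.
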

   \begin{proof}
     Let
     $Y=\{y_1,y_2,\dots,y_m\}$ and set
     $y_i>y_j$ iff
     $i>j$.
     First of all, let us show that if at least two vertices of
     $T_1$ are not endpoints then at least two of them are adjacent. Suppose that
     $x_i$ and
     $x_j$ are not endpoints. If
     $x_i$ and
     $x_j$ are not adjacent in
     $T_1$ then there exists a simple path
     $(x_i,x_{i_1},\dots, x_{i_k},x_j)$ connecting these vertices.
     We are left to notice that all inner vertices of this path
     are not endpoints because the degree of each endpoint is
     $1$ while the degree of any inner vertex is at least
     $2$.

     Therefore, we can choose different vertices
     $z_1,z_2,z_3,z_4$ in
     $X$ in such a way that the
     subgraph
     $\widetilde{T}$ of
     $T_1$ generated by these vertices seems as on Figure~%
\ref{st}.
     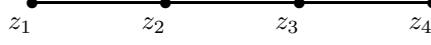
\begin{figure}
      \begin{picture}(170,40)
       \put(10,20){\line(1,0){150}}
       \put(10,20){\circle*{4}}
       \put(60,20){\circle*{4}}
       \put(110,20){\circle*{4}}
       \put(160,20){\circle*{4}}
       \put(1,10){\makebox{$z_1$}}
       \put(51,10){\makebox{$z_2$}}
       \put(101,10){\makebox{$z_3$}}
       \put(151,10){\makebox{$z_4$}}
      \end{picture}
     \caption{Subgraph generated by
     $z_1,z_2,z_3,z_4$} \label{st}
     \end{figure}
     Denote
     $\{z_1,z_2,z_3,z_4\}$ by
     $\widetilde{X}$.

     Since
     $\widetilde{T}$ has no cycles the following formula holds in
     $M(\widetilde{X};\widetilde{T})$ and therefore, in
     $M(X;T_1)$:
     \begin{equation}\label{twonotdung}
       \begin{split}
         \exists v_1v_2v_3v_4 & ([v_1,v_2]=[v_2,v_3]=[v_3,v_4]=0 \\
         & \wedge [v_1,v_3]\neq 0 \wedge [v_1,v_4]\neq 0 \wedge [v_2,v_4] \neq 0).
       \end{split}
     \end{equation}
     We can put
     $v_i$ to be equal to
     $z_i$.

     Suppose that
     $T_2$ has at most one vertex which is  not an endpoint. If
     all vertices of
     $T_2$ are endpoints then
     $Y$ contains one or two
     vertices. In this case
     $M(Y;T_2)$ is an abelian Lie algebra. Consequently, formula
(\ref{twonotdung}) does not hold in this algebra.

     So, without loss of generality we may assume that
     $y_{m}$ is not an endpoint. Then this vertex is adjacent to all other vertices
     and there are no other edges. So,
     \begin{equation}\label{dirsum}
       M(Y;T_2)= M(Y')\oplus \mathcal{L}(y_{m}),
     \end{equation}
     where
     $Y'=Y\backslash\{y_{m}\}$,
     $\mathcal{L}(y_m)$ is one-dimensional Lie algebra, and
     $[u,v]=0$ for any
     $u\in M(Y')$ and  any
     $v \in \mathcal{L}(y_{m})$. It means that
     $M(Y;T_2)$ is a direct sum of a free metabelian Lie algebra and the
     one-dimensional abelian Lie algebra (as vector spaces) and Lie product of any elements from
     different direct summands is equal to zero.

     Suppose that
     $u_1,u_2,u_3 \in M(Y')$. Let us show that if
     $[u_1,u_2]=[u_2,u_3]=0$ then
     $[u_1,u_3]=0$.

     If
     $u_2\not \in M'(Y')$ then Theorem~%
\ref{freeproduct} implies that there exist
     $\alpha,\beta,\gamma,\delta \in \mathbb{Z}\backslash\{0\}$, such that
     $\alpha u_1=\beta u_2$,
     $\gamma u_2=\delta u_3$. Consequently,
     $\alpha\gamma u_1=\beta\gamma u_2=\beta\delta u_3$, where
     $\alpha\gamma,\beta\delta \neq 0$. It means that
     $u_1$ and
     $u_3$ are linearly dependent. Therefore,
     $[u_1,u_3]=0$ by Theorem~%
\ref{freeproduct}.

     Again by Theorem~
 \ref{freeproduct}, it follows that if
     $u_2\in M'(Y')$ then
     $u_1,u_3\in M'(Y')$. Therefore,
     $[u_1,u_3]=0$.

     So, if
     $T_1$ has at least two vertices which are not endpoints, then
(\ref{twonotdung}) holds in
     $M(X;T_1)$. Since
     $M(X;T_1)$ and
     $M(Y;T_2)$ are universally equivalent, formula
(\ref{twonotdung}) holds in
     $M(Y;T_2)$. By
(\ref{dirsum}), each element
     $v_i$ can be represented in the form
     $v_i=u_i+\alpha_i y_{m}$, where
     $u_i\in M(Y')$. Since the multiples of
     $y_{m}$ are in the center of
     $M(Y,T_2)$, we obtain
     $[v_i,v_j]=[u_i,u_j]$.

     Consequently, if
     $[v_1,v_2]=[v_2,v_3]=0$ then
     $[u_1,u_2]=[u_2,u_3]=0$ and we have
     $[u_1,u_3]=0$. Therefore,
     $[v_1,v_3]=0$ and we have a contradiction to~
(\ref{twonotdung}). Thus, at least two vertices of
    $T_2$ are not endpoints vertices.
   \end{proof}
   \begin{llll}\label{charformula}
     Let
     $T$ be a tree and let
     $X$ be the set of its vertices at least
     $k$ of which are not endpoints. Then the following formula holds in
     $M(X;T)$:
     \begin{equation}\label{longformula}
       \begin{split}
         \Phi(T)= & \exists z_1  \dots z_k  u_1\dots u_k v_1\dots v_k
           \left(\bigwedge_{i=1}^k [u_i,v_i]\neq 0
           \wedge \bigwedge _{i=1}^k [u_i,v_i,z_i]=0 \wedge \right.\\
         &\left. \bigwedge_{i\neq j} [u_i,v_i,z_j]\neq 0
           \wedge \bigwedge_{\{x_i,x_j\}\in T} [z_i,z_j]=0
           \wedge \bigwedge_{\{x_i,x_j\}\not\in T} [z_i,z_j]\neq 0\right).
       \end{split}
     \end{equation}
   \end{llll}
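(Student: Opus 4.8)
The plan is to exhibit explicit witnesses for the existentially quantified variables and then to verify the five families of conjuncts one by one, using Theorem~\ref{ann} and Lemma~\ref{zero} as the main tools. After relabelling I would assume that $x_1,\dots,x_k$ are $k$ vertices of $T$, none of which is an endpoint, and that the conjuncts indexed by $\{x_i,x_j\}\in T$ and $\{x_i,x_j\}\notin T$ run over the pairs with $1\leqslant i<j\leqslant k$. I would set $z_i=x_i$ for each $i$; and since $x_i$ is not an endpoint we have $\deg x_i\geqslant 2$, so I would choose two distinct neighbours $x_{a_i},x_{b_i}$ of $x_i$ and put $u_i=x_{a_i}$, $v_i=x_{b_i}$.

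With these choices the adjacency conjuncts are immediate. For $\{x_i,x_j\}\in T$ the defining relations give $[z_i,z_j]=[x_i,x_j]=0$, while for $\{x_i,x_j\}\notin T$ the subgraph generated by $\{x_i,x_j\}$ carries no edge, so its two vertices lie in different connected components and $[z_i,z_j]=[x_i,x_j]\neq 0$ by Lemma~\ref{zero}. The same lemma settles the first conjunct: because $T$ is a tree it contains no triangle, so two neighbours $x_{a_i},x_{b_i}$ of $x_i$ are themselves non-adjacent, whence $[u_i,v_i]=[x_{a_i},x_{b_i}]\neq 0$.

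For the two remaining families I would invoke Theorem~\ref{ann}. Since $T$ is a tree there is a unique simple path between $x_{a_i}$ and $x_{b_i}$, and as both are neighbours of $x_i$ this path is exactly $(x_{a_i},x_i,x_{b_i})$; hence the annihilator of $[x_{a_i},x_{b_i}]$ is the principal ideal $I^{T}_{a_i,b_i}=(x_i)$ of $R[X]$. The conjunct $[u_i,v_i,z_i]=0$ follows at once since $x_i\in(x_i)$. For the cross terms with $j\neq i$ I need $[u_i,v_i,z_j]=[x_{a_i},x_{b_i}].x_j\neq 0$, that is $x_j\notin(x_i)$; but a single generator $x_j$ lies in the principal ideal $(x_i)$ only when $j=i$, so the required inequality holds for every $j\neq i$ by Theorem~\ref{ann}.

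The substantive point, and the step I expect to be the crux, is this computation of the annihilator for the last two families: it is precisely the tree hypothesis that forces $I^{T}_{a_i,b_i}$ to be the \emph{principal} ideal generated by the single intermediate vertex $x_i$, which is exactly what makes $z_i$ the unique $z_j$ annihilating $[u_i,v_i]$. In a graph with extra edges the path between $x_{a_i}$ and $x_{b_i}$ need not be unique and the annihilator could acquire further generators, destroying the third family of conjuncts; so the argument genuinely relies on $T$ being a tree together with the absence of triangles that this entails.
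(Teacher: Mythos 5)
Your choice of witnesses is exactly the paper's: $z_i=x_i$ for $k$ non-endpoint vertices and $u_i,v_i$ two distinct neighbours of $x_i$, and your treatment of the conjuncts $[u_i,v_i]\neq 0$, $[z_i,z_j]=0$ and $[z_i,z_j]\neq 0$ matches the paper's (no triangles in a tree, the defining relations, and Lemma~\ref{zero}). Where you diverge is in the remaining two families: the paper verifies $[u_i,v_i,z_i]=0$ by a one-line Jacobi computation, $[u_i,v_i,x_i]=[u_i,x_i,v_i]+[u_i,[v_i,x_i]]=0$, and verifies $[u_i,v_i,z_j]\neq 0$ for $j\neq i$ by applying Lemma~\ref{zero} to the three-letter monomial and ruling out, via the tree hypothesis, that $z_j$ could be adjacent to both $u_i$ and $v_i$ (which would create the cycle $(u_i,z_i,v_i,z_j,u_i)$). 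You instead route both families through Theorem~\ref{ann}, computing that the annihilator of $[x_{a_i},x_{b_i}]$ is the principal ideal $(x_i)$ because the unique simple path between two neighbours of $x_i$ in a tree passes through $x_i$, and then checking membership of the generators $x_j$ in that ideal. Both arguments are correct and both ultimately rest on the acyclicity of $T$ in the same place; yours is conceptually tidier in that one theorem settles both the vanishing and the non-vanishing at once, while the paper's is more elementary, needing only Lemma~\ref{zero} rather than the full (and harder) annihilator description. Your closing remark correctly identifies why the tree hypothesis is essential, which aligns with the cycle-based contradictions in the paper's version.
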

   \begin{proof}
     Without loss of generality we may assume that
     $x_1,\dots,x_k$ are not endpoints. These vertices can be chosen for
     $z_1,\dots z_k$ respectively. For each
     $i=1,2,\dots, k$, let us take two different vertices adjacent to
     $x_i$ as
     $u_i$ and
     $v_i$. We have
     $[u_i,v_i]\neq 0$. Indeed, if
     $u_i$ is adjacent to
     $v_i$ then
     $(u_i,v_i,z_i,u_i)$ is a cycle in
     $T$ and we obtain a contradiction. Next,
     $[u_i,v_i,z_i]=[u_i,z_i,v_i]+[u_i,[v_i,z_i]]=0$ since
     $z_i$ is adjacent to both
     $u_i$ and
     $v_i$. On the other hand, if
     $i\neq j$ then
     $[u_i,v_i,z_j]\neq 0$. Indeed, otherwise Lemma~
\ref{zero} implies that
     $u_i$ and
     $v_i$ are in the same connected component of the subgraph of
     $T$ generated by the set
     $\{u_i,v_i,z_j\}$. Consequently, either
     $u_i$ is adjacent to
     $v_i$ or the vertex
     $z_j$ is adjacent to both
     $u_i$ and
     $v_i$. The former case contradicts to the inequality
     $[u_i,v_i]\neq 0$. In the latter case, we have
     $(u_i,z_i,v_i,z_j,u_i)$ is a cycle. Since
     $T$ is a tree we again have a contradiction. Finally,
     it is obvious that
     $[z_i,z_j]=0$ for
     $\{x_i,x_j\}\in T$ and
     $[z_i,z_j]\neq 0$ for
     $\{x_i,x_j\} \not\in T$.
   \end{proof}

   Let
   $G$ be a graph with the set of vertices
   $X$. We say that an endpoint
   $z$ of
   $G$ is \emph{unnecessary} if degree of the vertex adjacent to
   $z$ is at least
   $3$.

   Suppose that
   $T$ has an unnecessary endpoint. Without loss of generality we can assume that
   $x_n$ is such vertex and
   $x$ is adjacent to
   $x_n$ and to
   $x_{n-2},x_{n-1}$ (see~Figure~%
\ref{t}).
   \begin{figure}
     \begin{picture}(320,164)
       \thicklines
       \put(160,160){\line(0,-1){40}}
       \put(160,120){\line(-3,-1){90}}
       \put(160,120){\line(3,-1){90}}
       \put(70,90){\line(-1,-3){20}}
       \put(70,90){\line(-2,-3){40}}
       \put(70,90){\line(2,-3){40}}
       \put(250,90){\line(-1,-3){20}}
       \put(250,90){\line(-2,-3){40}}
       \put(250,90){\line(2,-3){40}}
       \put(160,120){\line(-1,-3){15}}
       \put(160,120){\line(-2,-3){30}}
       \put(160,120){\line(2,-3){30}}
       \put(160,160){\circle*{4}}
       \put(160,120){\circle*{4}}
       \put(70,90){\circle*{4}}
       \put(250,90){\circle*{4}}
       \multiput(65,30)(15,0){3}{\circle*{2}}
       \multiput(245,30)(15,0){3}{\circle*{2}}
       \multiput(158,75)(10,0){3}{\circle*{2}}
       \qbezier[12](42,15)(50,47)(58,15)
       \qbezier[8](42,15)(50,-10)(58,15)
       \qbezier[12](22,15)(30,47)(38,15)
       \qbezier[8](22,15)(30,-10)(38,15)
       \qbezier[12](102,15)(110,47)(118,15)
       \qbezier[8](102,15)(110,-10)(118,15)
       \qbezier[12](222,15)(230,47)(238,15)
       \qbezier[8](222,15)(230,-10)(238,15)
       \qbezier[12](202,15)(210,47)(218,15)
       \qbezier[8](202,15)(210,-10)(218,15)
       \qbezier[12](282,15)(290,47)(298,15)
       \qbezier[8](282,15)(290,-10)(298,15)
       \qbezier[6](123,65)(130,85)(136,65)
       \qbezier[8](123,65)(130,40)(136,65)
       \qbezier[6](138,65)(145,85)(151,65)
       \qbezier[8](138,65)(145,40)(151,65)
       \qbezier[6](183,65)(190,85)(196,65)
       \qbezier[8](183,65)(190,40)(196,65)
       \put(165,160){\makebox{\large $x_n$}}
       \put(75,85){\makebox{\large $x_{n-1}$}}
       \put(165,120){\makebox{\large $x$}}
       \put(255,85){\makebox{\large $x_{n-2}$}}
       \thicklines
     \end{picture}
     \caption{Graph
     $T$} \label{t}
   \end{figure}

   Denote by
   $X'$ the set
   $X\backslash\{x_n\}$. Let,
   $T_{X'}$ be the subtree of
   $T$ generated by
   $X'$.

   For any
   $\lambda,p \in \mathbb{Z}$ we define the map
   $\varphi_{\lambda, p}: X \rightarrow M(X';T_{X'})$ as follows:
   $$\varphi_{\lambda, p}(x_i)=
     \begin{cases}
       x_i, & \text{ if } i\neq n, \\
       \lambda p x_{n-1}+\lambda x_{n-2}, & \text{ if } i=n.
     \end{cases}
   $$
   It is easy to see that this map can be extended up to a homomorphism
   from
   $M(X;T)$ to
   $M(X';T_{X'})$ uniquely. We denote this homomorphism by
   $\varphi_{\lambda, p}$ as well.
   Let
   $\mathbb{Z}^+$ denote the set of all positive integers.

   \begin{llll}\label{poly}
     Let
     $f(x,y)$ be a non-zero polynomial over
     $\mathbb{R}$. Then there exist a number
     $X\in \mathbb{Z}^+$ and a function
     $Y: \mathbb{Z}^+ \rightarrow \mathbb{Z}^+$ such that
     $f(x,y)\neq 0$ for all positive integers
     $x>X$ and
     $y>Y(x)$.
   \end{llll}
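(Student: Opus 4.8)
The plan is to regard $f$ as a polynomial in the single variable $y$ whose coefficients lie in $\mathbb{R}[x]$, and to control separately the leading coefficient (which fixes $X$) and then, for each admissible integer $x$, the finitely many roots in $y$ (which defines $Y(x)$). The only ingredient needed is the elementary fact, already used in the proof of Lemma~\ref{asspoly}, that a non-zero polynomial in one variable over an infinite integral domain has only finitely many roots.

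First I would write $f(x,y)=\sum_{i=0}^{d} a_i(x)\,y^i$, where each $a_i\in\mathbb{R}[x]$ and $d$ is the largest index for which $a_d$ is not identically zero; such a $d$ exists because $f\neq 0$. The univariate polynomial $a_d(x)$ is non-zero, hence has only finitely many real roots, so I can take $X\in\mathbb{Z}^+$ strictly larger than all of them (and $X=1$ if there are none). Then $a_d(x)\neq 0$ for every real $x>X$, in particular for every integer $x>X$.

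Next I would fix an arbitrary integer $x>X$. Since $a_d(x)\neq 0$, the specialization $f(x,y)\in\mathbb{R}[y]$ is a non-zero polynomial in $y$ of degree exactly $d$, so it has at most $d$ real roots. I define $Y(x)\in\mathbb{Z}^+$ to be strictly larger than every real root of $f(x,\cdot)$, taking $Y(x)=1$ when there are none. By construction $f(x,y)\neq 0$ for all real $y>Y(x)$, hence for all integers $y>Y(x)$; this yields the required function $Y:\mathbb{Z}^+\rightarrow\mathbb{Z}^+$ and the conclusion $f(x,y)\neq 0$ whenever $x>X$ and $y>Y(x)$.

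I do not expect a genuine obstacle. The one point to note is the degenerate case $d=0$, where $f$ does not depend on $y$: there $f(x,y)=a_0(x)\neq 0$ for every $x>X$ and every $y$, so any choice such as $Y(x)=1$ works and the argument goes through verbatim. The entire content is thus the finiteness-of-roots fact, applied once to the leading coefficient $a_d(x)$ to pin down $X$, and then once to each specialization $f(x,\cdot)$ to build $Y(x)$.
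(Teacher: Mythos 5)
Your proof is correct and follows essentially the same route as the paper: write $f$ as a polynomial in $y$ with coefficients in $\mathbb{R}[x]$, choose $X$ beyond the roots of the coefficient polynomial(s) so that each specialization $f(x,\cdot)$ is non-zero, then set $Y(x)$ beyond the finitely many roots of that specialization. The only (harmless) difference is that you bound only the roots of the leading non-zero coefficient $a_d$, whereas the paper bounds the roots of all the coefficients $p_i$; your version is marginally more economical but the argument is the same.
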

   \begin{proof}
     Rewrite
     $f(x,y)$ in the form
     \begin{equation*}
       f(x,y)=\sum_{i=0}^n p_i(x)y^n,
     \end{equation*}
     where
     $p_i(x)$,
     $i=0,1,\dots n$ are polynomials in the indeterminate
     $x$. Let
     $X$ be the smallest positive integer that is greater than all roots of
     all polynomials
     $p_i(x)$. Then for each
     $x_0>X$ we have
     $g(y)=f(x_0,y)$ is a polynomial in
     $y$ with non-zero coefficients. Consequently, one may set
     $Y(x_0)$ to be the smallest positive integer that is greater than the greatest root of
     $g(y)$. One can chose any values for
     $Y(x_0)$ if
     $x_0\leqslant X$.
   \end{proof}

   \begin{llll}\label{tononero}
     Let
     $g \in M(X;T)\backslash M'(X;T)$ and
     $p$ any positive integer; then there exists
     $\lambda_0 \in \mathbb{Z}^+$ such that
     $\varphi_{\lambda, p}(g)\neq 0$ for any
     $\lambda\geqslant \lambda_0$.
   \end{llll}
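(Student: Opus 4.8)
The plan is to split $g$ into its degree-one part and its part in the derived subalgebra, and to show that $\varphi_{\lambda,p}$ already fails to annihilate the degree-one part once $\lambda$ is large; the derived part is then irrelevant.

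First I would use the basis $\mathfrak{B}(X;T)$ from Theorem~\ref{baspcom} to write $g=g_1+g'$, where $g_1=\sum_{i=1}^n\alpha_i x_i$ is the span of the length-one basis elements and $g'\in M'(X;T)$ collects the monomials of length $\geqslant 2$. Since $g\notin M'(X;T)$, the linear part $g_1$ is nonzero, so some $\alpha_i\neq 0$. Because $\varphi_{\lambda,p}$ is a Lie homomorphism it maps $M'(X;T)$ into $M'(X';T_{X'})$, so $\varphi_{\lambda,p}(g')$ lies in the derived subalgebra, while $\varphi_{\lambda,p}(g_1)=\sum_{i=1}^{n-3}\alpha_i x_i+(\alpha_{n-2}+\lambda\alpha_n)x_{n-2}+(\alpha_{n-1}+\lambda p\,\alpha_n)x_{n-1}$ is a $\mathbb{Z}$-linear combination of the generators $x_1,\dots,x_{n-1}$, i.e. a length-one element. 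As the length-one and the length-$\geqslant 2$ members of $\mathfrak{B}(X';T_{X'})$ are jointly linearly independent, the two images lie in complementary subspaces of $M(X';T_{X'})$; hence it suffices to show $\varphi_{\lambda,p}(g_1)\neq 0$ for all large $\lambda$.

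The remaining step is elementary, using that $x_1,\dots,x_{n-1}$ are linearly independent, so $\varphi_{\lambda,p}(g_1)=0$ exactly when every coefficient above vanishes. Each coefficient is an affine function of $\lambda$: the constants $\alpha_i$ for $i\leqslant n-3$, and the two affine expressions $\alpha_{n-2}+\lambda\alpha_n$ and $\alpha_{n-1}+\lambda p\,\alpha_n$. If all of these were identically zero in $\lambda$ we would force $\alpha_1=\dots=\alpha_{n-3}=\alpha_{n-2}=\alpha_{n-1}=\alpha_n=0$, i.e. $g_1=0$, contradicting $g\notin M'(X;T)$. Therefore at least one coefficient is a nonzero affine function of $\lambda$, and over $\mathbb{Z}$ such a function has at most one integer root; choosing $\lambda_0$ beyond that root makes this coefficient nonzero for every $\lambda\geqslant\lambda_0$, whence $\varphi_{\lambda,p}(g_1)\neq 0$ and so $\varphi_{\lambda,p}(g)\neq 0$.

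I do not expect a genuine obstacle here. The only points requiring care are keeping the degree-one and derived components cleanly separated (so that nonvanishing of the linear image forces nonvanishing of $\varphi_{\lambda,p}(g)$) and recording that a nonzero affine function of $\lambda$ has at most one root, which makes ``for all sufficiently large $\lambda$'' automatic. In particular the two-variable polynomial machinery of Lemma~\ref{poly} is \emph{not} needed for this lemma; it is reserved for the companion statement, where one must control images of elements of $M'(X;T)$ and genuine polynomials in both $\lambda$ and $p$ appear.
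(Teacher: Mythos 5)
Your proof is correct and follows essentially the same route as the paper's: decompose $g$ into its linear part and its component in $M'(X;T)$, observe that $\varphi_{\lambda,p}$ preserves this splitting so the derived part cannot cancel the length-one part, and then note that the coefficients of $x_1,\dots,x_{n-1}$ in the image of the linear part are affine in $\lambda$ and cannot all vanish identically unless the linear part of $g$ is zero. The paper merely organizes the same computation as a case split on whether $\alpha_n=0$, singling out the coefficient $\alpha_{n-2}+\lambda\alpha_n$; your uniform treatment of all coefficients is an equivalent presentation.
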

   \begin{proof}
     Let
     $g=g_0+g_1$, where
     $g_0=\sum_{i=1}^n \alpha_i x_i$  and
     $g_1\in M'(X;G)$. Since
     $g\in M(X;G)\backslash M'(X;G)$ we have
     $g_0\neq 0$.

     If
     $\alpha_n=0$, then
     $\varphi_{\lambda, p}(g)=g_0+\varphi_{\lambda, p}(g_1)$. Since
     $\varphi_{\lambda, p}(g_1)\in M'(X;T)$ one can see that
     $g_0$ and
     $\varphi_{\lambda, p}(g_1)$ are linearly
     independent. Thus,
     $\varphi_{\lambda, p}(g)\neq 0$ for any
     $\lambda$ and we can put, for example,
     $\lambda_0=1$.

     If
     $\alpha_n\neq 0$, then
     $$\varphi_{\lambda, p}(g)=\sum_{i=1}^{n-3} \alpha_i x_i+
       (\alpha_{n-2}+\lambda \alpha_{n})x_{n-2}+
       (\alpha_{n-1}+\lambda p \alpha_{n})x_{n-1}+\varphi_{\lambda, p}(g_1).$$

     Since
     $\mathbb{Z}$ is an integral domain and the polynomial
     $\alpha_{n-2}+\lambda \alpha_{n}$ does not depend on
     $p$ there exists
     $\lambda_0$ satisfying the conditions of the lemma. So,
     $(\alpha_{n-2}+\lambda \alpha_{n})x_{n-2}\neq 0$. Since all generators are linearly
     independent, this summand cannot cancel with
     other summands of the length
     $1$. Therefore,
     $\varphi_{\lambda, p}(g_0)\neq 0$. It means that
     $\varphi_{\lambda, p}(g_0)\not \in M'(X;T)$. Since
     $\varphi_{\lambda, p}(g_1)\in M'(X;T)$, we obtain
     $\varphi_{\lambda, p}(g)=\varphi_{\lambda, p}(g_0)+\varphi_{\lambda, p}(g_1)\neq 0$ for
     any
     $\lambda \geqslant \lambda_0$.
  \end{proof}

  Consider
  $g\in M'(X;T)$. There is the decomposition
  \begin{equation} \label{mdeghomdecomp}
    g=\sum_{\overline{\delta}} g_{\overline{\delta}},
  \end{equation}
  where each
  $g_{\overline{\delta}}$ is a Lie monomial such that
  $\mdeg(g_{\overline{\delta}})=\overline{\delta}$. Denote by
  $\mdeg(v;k)$ the tuple of the first
  $k$ coordinates of
  $\mdeg(v)$. By analogy,
  $\overline{\delta}(k)$ is the tuple
  $(\delta_1,\delta_2,\dots, \delta_k)$.

  \begin{llll}\label{nonhomtohom}
     Let
     $g \in M(X;T)$ and let there be a positive
     integer number
     $N$ such that for any
     $\lambda>N$ the equality
     $\varphi_{\lambda,p}(g)=0$ holds for infinitely many integers
     $p>P(\lambda)$. Then
     $\varphi_{\lambda,p}(g_{\overline{\delta}})=0$ for all
     $\lambda$,
     $p$ and for all
     $g_{\overline{\delta}}$ in decomposition~%
(\ref{mdeghomdecomp}).
   \end{llll}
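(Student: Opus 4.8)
The plan is to realize $\varphi_{\lambda,p}$ as the specialization at $s=\lambda p$, $t=\lambda$ of a single homomorphism into a polynomial extension of the target module, and then to read off the vanishing of each homogeneous component from the vanishing of the resulting coefficient polynomials. First I would introduce two commuting indeterminates $s,t$ and the homomorphism $\Psi\colon M(X;T)\to M(X';T_{X'})\otimes_{\mathbb Z}\mathbb Z[s,t]$ determined by $\Psi(x_i)=x_i$ for $i<n$ and $\Psi(x_n)=s\,x_{n-1}+t\,x_{n-2}$. This $\Psi$ is well defined for the same reason $\varphi_{\lambda,p}$ is: the only relation on $x_n$ is $[x_n,x]=0$, and $x$ is adjacent to both $x_{n-1}$ and $x_{n-2}$, so $[\Psi(x_n),x]=s[x_{n-1},x]+t[x_{n-2},x]=0$. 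Clearly $\varphi_{\lambda,p}$ is obtained from $\Psi$ by substituting $s=\lambda p$, $t=\lambda$.

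Expanding $\Psi(g)$ in the basis $\mathfrak B(X';T_{X'})$ of Theorem~\ref{baspcom} gives $\Psi(g)=\sum_w c_w(s,t)\,w$ with $c_w\in\mathbb Z[s,t]$, and by linear independence of the basis the hypothesis says precisely that $c_w(\lambda p,\lambda)=0$ for every $w$, for each $\lambda>N$ and infinitely many $p$. Next I would extract these coefficients. For a fixed $w$ and a fixed $\lambda>N$ the map $p\mapsto c_w(\lambda p,\lambda)$ is a one-variable polynomial with infinitely many roots, hence identically zero (a nonzero polynomial over $\mathbb Z$ has finitely many roots, as in the proof of Lemma~\ref{asspoly}). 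Writing $c_w(s,t)=\sum_{a,b}\kappa^{w}_{a,b}\,s^{b}t^{a}$, the vanishing of the coefficient of $p^{b}$ yields $\sum_a\kappa^w_{a,b}\lambda^{a+b}=0$ for every $\lambda>N$; since for fixed $b$ the exponents $a+b$ are distinct, this polynomial in $\lambda$ has infinitely many roots, so $\kappa^w_{a,b}=0$ for all $a,b$. Thus $c_w\equiv0$ for every $w$, i.e. $\Psi(g)=0$ identically.

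Finally I would separate the homogeneous components of (\ref{mdeghomdecomp}). For a component $g_{\overline\delta}$ the element $\Psi(g_{\overline\delta})$ is $(s,t)$-homogeneous of degree $\delta_n$, and since applying $\Psi$ and rewriting in the basis preserves multidegree, a fixed basis monomial $w$ of multidegree $\overline\varepsilon$ receives a contribution from $g_{\overline\delta}$ only through the single monomial $s^{\varepsilon_{n-1}-\delta_{n-1}}t^{\varepsilon_{n-2}-\delta_{n-2}}$; these constraints force $\overline\delta$ to share with $\overline\varepsilon$ its first $n-3$ coordinates and the value of $\delta_{n-2}+\delta_{n-1}+\delta_n$. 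Within such a block $\overline\delta$ is determined by $(\delta_{n-2},\delta_{n-1})$, so distinct components contribute distinct monomials $s^bt^a$ to the same $c_w$. Hence $c_w\equiv0$ forces every individual contribution $\kappa^w_{\overline\delta}$ to vanish, whence $\Psi(g_{\overline\delta})=0$ for each component; specializing $s=\lambda p$, $t=\lambda$ gives $\varphi_{\lambda,p}(g_{\overline\delta})=0$ for all $\lambda,p$.

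The hard part is exactly this last bookkeeping: one must verify carefully that, after rewriting in the basis of $M(X';T_{X'})$, distinct homogeneous components land on distinct $(s,t)$-monomials attached to a common basis element $w$. This is what rules out cancellation between components and lets the collapse $c_w\equiv0$ be distributed over the individual $g_{\overline\delta}$; the earlier polynomial-vanishing steps are routine once the two indeterminates $s=\lambda p$ and $t=\lambda$ are kept formally separate.
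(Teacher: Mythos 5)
Your proposal is correct and follows essentially the same route as the paper: the paper likewise fixes a target basis monomial, observes that the source monomials contributing to it with a given power $\lambda^{m}p^{k}$ all have one common multidegree, and kills the resulting coefficients $\beta_{m,k}$ by the polynomial-vanishing argument (its Lemma on $f(x,y)$). Your only departure is the cosmetic one of keeping $s=\lambda p$ and $t=\lambda$ as formal indeterminates via the homomorphism $\Psi$, which packages the same coefficient bookkeeping more cleanly but introduces no new idea.
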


   \begin{proof}
     Consider
     $g_{\overline{\delta}}$ for some
     $\overline{\delta}$. Let
     $v$ be a basis monomial in the decomposition of
     $g_{\overline{\delta}}$ and
     $v'$ a monomial in the decomposition of
     $\varphi_{\lambda,p}(v)$ as a linear combination of basis
     elements. Then
     $\mdeg(v';n-3)=\overline{\delta}(n-3)$.

     Let
     $v'$ be some monomial of
     $M(X';T')$. Consider a basis monomial
     $v$ of
     $M(X;T)$ such that
     $v'$ is contained in the decomposition of
     $\varphi_{\lambda,p}(v)$ as a linear combination of basis elements. We have
     $\mdeg(v;n-3)=\mdeg(v';n-3)$. Moreover, it is easy to see that
     $\mdeg_{n-2}(v)+\mdeg_{n-1}(v)+\mdeg_{n}(v)=\mdeg_{n-2}(v')+\mdeg_{n-1}(v')$,
     $\mdeg_{n-2}(v)\leqslant \mdeg_{n-2}(v')$, and
     $\mdeg_{n-1}(v)\leqslant \mdeg_{n-1}(v')$. Hence, if
     $\mdeg(v')=(\gamma_1,\dots \gamma_{n-1})$ then
     $\mdeg(v)=(\gamma_1,\dots \gamma_{n-3},\gamma_{n-2}-s,\gamma_{n-1}-r,r+s)$,
     where
     $r,s\geqslant 0$.

     Note that the coefficient of
     $v'$ in the decomposition of
     $\varphi_{\lambda,p}(v)$  is
     $\beta \lambda^{r+s}p^r$, where
     $\beta$ does not depend on
     $\lambda$,
     $p$.

     Let
     $g=\sum \alpha_i u_i$, where
     $u_i$ are different basis monomials. Let us find
     $\varphi_{\lambda,p}(u_i)$ for all
     $u_i$ and compute the sum of coefficients by
     $v'$ in all such monomials. This sum is equal to
     $$\sum_{m=0}^{\gamma_{n-2}+\gamma_{n-1}}\sum_{k=\max(0,m-\gamma_{n-2})}^{\min(m,\gamma_{n-1})} \beta_{m,k}\lambda^{m}p^k,$$
     where the coefficients
     $\beta_{m,k}$ depend on the coefficients
     $\alpha_i$ but do not depend on
     $\lambda$ and
     $p$.

     On the other hand, since
     $\varphi_{\lambda,p}(g)=0$ for
     $\lambda >N$ and for infinitely many positive integers
     $p$, we obtain the coefficient by
     $v'$ is equal to zero. By Lemma~%
\ref{poly}, we get
     $\beta_{m,k}=0$ for all
     $m$ and
     $k$. We are left to note that
     $\beta_{m,k}\lambda^{m}p^k$ is the sum of coefficients by
     $v'$ in the decomposition of the elements
     $\varphi_{\lambda,p}(u_i)$ such that
     $\mdeg (u_i)=(\gamma_1,\dots,\gamma_{n-3},\gamma_{n-2}-m+k,\gamma_{n-1}-k,m)$.
     So, the multidegrees of all such elements are same. Therefore,
     the coefficient by
     $v'$ in the decomposition of
     $\varphi_{\lambda,p}(g_{\overline{\delta}})$ is
     $0$ for any multidegree
     $\overline{\delta}$. Since
     $v'$ is an arbitrary monomial, the proof is complete.
   \end{proof}

   Let
   $w(x_i,\overline{\delta}(n-3),\delta_{n-2},\delta_{n-1},\delta_{n})$
   be the basis monomial of
   $M(X;T)$ such that it starts with
   $x_i$ and its multidegree is
   $\overline{\delta}=(\delta_1,\dots,\delta_n)$. Similarly, let us denote by
   $w'(x_i,\overline{\gamma}(n-3),\gamma_{n-2},\gamma_{n-1})$ the
   basis monomial of
   $M(X';T')$ such that it starts with
   $x_i$ and its multidegree is
   $\overline{\gamma}=(\gamma_1,\dots,\gamma_{n-1})$. By analogy with
   Sec.~%
\ref{pcalgbas}, if
   $g$ is a homogeneous polynomial then we denote by
   $T_g$ the subgraph of
   $T$ generated by
   $\supp(g)$.

   Let
   $g$ be a polynomial in
   $M(X;T)$. Replace all
   $x_n$'s in
   $g$ with
   $\lambda p x_{n-1}$ and represent the obtained polynomial as a linear combination of basis elements.
   We denote this linear combination by
   $\overline{\varphi}_{\lambda,p}(g)$. Analogously, denote by
   $\underline{\varphi}_{\lambda,p}(g)$ the polynomial obtained by replacing all
   $x_n$'s in
   $g$ with
   $\lambda x_{n-2}$ and representing the obtained polynomial as a
   linear combination of basis elements. Obviously,
   $\varphi_{\lambda,p}(g)=\overline{\varphi}_{\lambda,p}(g)+\underline{\varphi}_{\lambda,p}(g)+h$.
   Here, if
   $\overline{\varphi}_{\lambda,p}(g)\neq 0$ and
   $h\neq 0$ then
   $\mdeg(\overline{\varphi}_{\lambda,p}(g))>\mdeg(w)$ for any monomial
   $w$ appearing in
   $h$.
   Similarly, if
   $\underline{\varphi}_{\lambda,p}(g)\neq 0$ and
   $h\neq 0$ then
   $\mdeg(\underline{\varphi}_{\lambda,p}(g))<\mdeg(w)$
   for any monomial
   $w$ appearing in
   $h$. Consequently, if
   $\varphi_{\lambda,p}(g)=0$, then
   $\overline{\varphi}_{\lambda,p}(g)=\underline{\varphi}_{\lambda,p}(g)=0$.

   \begin{llll}\label{tononzero}
    For any
    $g \in M'(X;T)\backslash\{0\}$ there exist positive integers
    $\lambda_0$ and
    $p_0$ such that
    $p_0=p_0(g)$,
    $\lambda_0=\lambda_0(g,p_0)$, and
    $\varphi_{\lambda, p}(g)\neq 0$ for any
    $\lambda\geqslant \lambda_0$ and
    $p\geqslant p_0$.
  \end{llll}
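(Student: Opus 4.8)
The plan is to reduce to a homogeneous $g$ and then exploit that $\varphi_{\lambda,p}$ turns such a $g$ into $\lambda^{d}$ times a one–variable polynomial in $p$, whose non-vanishing is governed by finitely many roots. First I would split $g=\sum_{\overline\delta}g_{\overline\delta}$ as in \eqref{mdeghomdecomp}. Lemma~\ref{nonhomtohom} is tailored to exactly this reduction: it guarantees that $\varphi_{\lambda,p}(g)$ cannot vanish for all large $\lambda$ and infinitely many $p$ unless every homogeneous component $g_{\overline\delta}$ is sent to $0$ by $\varphi_{\lambda,p}$ for all $\lambda,p$. Hence it suffices to prove that a single nonzero homogeneous $h\in M'(X;T)$ is not annihilated by $\varphi_{\lambda,p}$ once $p$ is large enough (uniformly in $\lambda$), and then let Lemma~\ref{nonhomtohom} together with the decomposition \eqref{mdeghomdecomp} transfer this back to $g$.

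So let $h\neq 0$ be homogeneous and put $d=\mdeg_n(h)$. Expanding $\varphi_{\lambda,p}(x_n)=\lambda p\,x_{n-1}+\lambda x_{n-2}$ multilinearly over the $d$ occurrences of $x_n$ and grouping the resulting terms by the number $k$ of occurrences sent to $x_{n-1}$, I obtain
\[
  \varphi_{\lambda,p}(h)=\lambda^{d}\sum_{k=0}^{d}p^{k}S_k(h),
\]
where $S_k(h)\in M'(X';T_{X'})$ is the element produced by sending exactly $k$ of the letters $x_n$ to $x_{n-1}$ and the remaining $d-k$ to $x_{n-2}$, then rewriting the result in the basis $\mathfrak B(X';T_{X'})$. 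Because the additive group of $M'(X';T_{X'})$ is free (Theorem~\ref{baspcom}) and $\lambda\geqslant 1$, the factor $\lambda^{d}$ is immaterial, so $\varphi_{\lambda,p}(h)\neq 0$ whenever $P(p):=\sum_{k}p^{k}S_k(h)$ is nonzero. If $P$ is not the zero polynomial, each of its finitely many coordinate polynomials (over the basis) has only finitely many integer roots; choosing $p_0$ past all of them and $\lambda_0=1$ then gives $\varphi_{\lambda,p}(h)=\lambda^{d}P(p)\neq 0$ for all $\lambda\geqslant\lambda_0$ and $p\geqslant p_0$, exactly as required.

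The hard part is therefore to show $P\not\equiv 0$, i.e.\ that some $S_k(h)\neq 0$, equivalently that the generic substitution $x_n\mapsto s\,x_{n-1}+t\,x_{n-2}$ kills no nonzero homogeneous element. Here the distinctness of $x_{n-1}$ and $x_{n-2}$ is indispensable: the single substitution $x_n\mapsto x_{n-1}$ (the top term $S_d$ alone) is not injective — already $h=[x_n,x_{n-1}]$ is a nonzero basis monomial with $S_d(h)=[x_{n-1},x_{n-1}]=0$, while $S_0(h)=[x_{n-2},x_{n-1}]\neq 0$ — so no single extreme term can be trusted. Instead I would take the leading basis monomial $\overline h$ of $h$ in the standard order and follow its occurrences of $x_n$ through $\varphi_{\lambda,p}$. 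Since $x_n$ is a leaf adjacent only to $x_1$, Lemma~\ref{zero} sharply limits which of the images $S_k(\overline h)$ are nonzero basis monomials, and within one fixed image multidegree the distinctness of $x_{n-1},x_{n-2}$ records the occupancy number $k$ as the power of $p$. The aim of this bookkeeping is to single out one basis monomial of $M(X';T_{X'})$ whose coefficient in $\varphi_{\lambda,p}(h)$ is a nonzero polynomial in $p$ and which cannot be cancelled by the images of the smaller monomials of $h$; this yields $P\not\equiv 0$ and finishes the argument. Controlling those potential cancellations — guaranteeing that the surviving monomial is genuinely not reached by any lower term — is the main obstacle I anticipate.
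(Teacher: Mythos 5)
Your reduction is sound as far as it goes: splitting $g$ into homogeneous components and invoking Lemma~\ref{nonhomtohom}, then expanding $\varphi_{\lambda,p}(h)=\lambda^{d}\sum_{k=0}^{d}p^{k}S_k(h)$ and observing that torsion-freeness of the additive group plus finiteness of the integer roots of the coordinate polynomials reduces everything to $P\not\equiv 0$, is essentially the paper's own framing (the paper isolates only the two extreme terms, $\overline{\varphi}_{\lambda,p}=\lambda^{d}p^{d}S_d$ and $\underline{\varphi}_{\lambda,p}=\lambda^{d}S_0$, and notes that these have the extreme multidegrees and hence must vanish separately if $\varphi_{\lambda,p}(g)=0$). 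But the statement you defer --- that some $S_k(h)\neq 0$ for every nonzero homogeneous $h$ --- is not a technical afterthought; it is the entire content of the lemma, and you leave it unproved, as you yourself acknowledge in the last sentence. That is a genuine gap.

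Two concrete points about why the missing step is where the work lives. First, the paper does not attack an arbitrary homogeneous $h$ directly: it first enlarges the support, using Theorem~\ref{centlincomb} (and Lemma~\ref{centrintl}/Theorem~\ref{centinter}) to find generators $y$ with $h.y\neq 0$ until $|\supp(h)|=n-1$; only then does Theorem~\ref{baspcom} give the clean normal form $h=\sum_i\alpha_i w(y_i,\dots)$ with one basis monomial per connected component of $T_h$, which makes the images $S_0(h)$ and $S_d(h)$ explicitly computable. Your sketch makes no provision for this, and without it the combinatorics of ``which images are nonzero basis monomials'' is not controlled. Second, the cancellation you flag as ``the main obstacle'' really does occur: in the paper's case 2.1 the distinct basis monomials $w(x_n,\dots)$ and $w(x_{n-2},\dots)$ have the \emph{same} image under $S_0$, so the coefficient of that image is $\alpha_0+\alpha_2$, which can vanish for nonzero $h$; dually $S_d$ kills $w(x_n,\dots)$ into $w(x_{n-1},\dots)$-type terms. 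Consequently no single $S_k$ admits a ``leading monomial survives'' argument --- the paper must combine the vanishing of $S_0$ \emph{and} $S_d$, in three separate cases governed by whether $x_n$ and the common neighbour $x$ lie in $\supp(h)$, to force all $\alpha_i=0$. Until you supply an argument of this kind (or an alternative proof that the generic substitution $x_n\mapsto sx_{n-1}+tx_{n-2}$ is injective on homogeneous components), the proof is incomplete.
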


  \begin{proof}
     By Lemma~%
\ref{nonhomtohom}, we are left to consider the case when
     $g$ is homogeneous.

     Obviously, if
     $\varphi_{\lambda,p}(g.y)\neq 0$ for a generator
     $y$, then
     $\varphi_{\lambda,p}(g)\neq 0$.

     Let
     $g\neq 0$. Without loss of generality we may suppose that
     $\supp(g)$ contains all generators but one of them. Indeed, since
     $T$ is connected we have
     $\supp (g)\neq X$ by Theorem
\ref{baspcom}. On the other hand, if
     $|\supp(g)|<n-1$ then Theorem
\ref{centrintl} and Theorem
 \ref{centlincomb} imply there exists
     $y\in X\backslash \supp(g)$ such that
     $g.y\neq 0$. If
     $|\supp(g.y)|<n-1$, then there exists
     $z$, such that
     $(g.y).z\neq 0$ and so on. Finally, we obtain an element
     $g'\neq 0$ having all generators but one of them and
     $\varphi_{\lambda,p}(g')\neq 0$ implies
     $\varphi_{\lambda,p}(g)\neq 0$.

     So, let
     $|\supp(u)|=n-1$ and
     $\varphi_{\lambda,p}(g)=0$. There are several cases possible.

     \noindent
     1. If
     $\mdeg_n(g)=0$, namely if
     $x_n\not \in \supp(g)$, then we have
     $\varphi_{\lambda,p}(g)=g \neq 0$ for any
     $\lambda$ и
     $p$.\\

     \noindent
     2. Let
     $x\not \in \supp(g)$, then
     $x_{n-1},x_{n-2}\in \supp(g)$. It is obvious that
     $T_g$ is a forest. Denote by
     $T_0,T_1,\dots, T_r$ its connected components, where
     $T_0=\{x_n\}$, and
     $y_{1}=x_{n-1},y_{2}=x_{n-2}, y_{3}, \dots, y_{r}$ are the greatest vertices of
     $T_1,T_2,\dots, T_r$ respectively. Note that if
     $v$ is a monomial appearing in
     $g$ then the subgraph of
     $T$ generated by
     $\supp(\varphi_{\lambda,p}(g))$ is of the form
     $\displaystyle{\bigcup_{i=1}^r}\,T_i$. In particular its connected components are
     $T_1,T_2,\dots, T_r$. \\

     \noindent
     2.1. Let the smallest vertex of
     $T_g$ be in
     $T_1$. Since
     $n\geqslant 4$,
     $T_g$ has at least three vertices. Therefore, the smallest vertex of
     $T_g$ is not equal to
     $x_{n-1}$. We have
     \begin{equation*}
       \begin{split}
          g & = \alpha_0 w(x_n,\delta(n-3),\delta_{n-2},\delta_{n-1},\delta_n)
              + \alpha_2 w(x_{n-2},\delta(n-3),\delta_{n-2},\delta_{n-1},\delta_n)\\
            & + \sum_{i=3}^r\alpha_i w(y_i,\delta(n-3),\delta_{n-2},\delta_{n-1},\delta_n).
       \end{split}
     \end{equation*}
     Hence
     \begin{eqnarray}
       \begin{split}\label{21d}
          \underline{\varphi}_{\lambda,p}(g)
            & = \lambda^{\delta_n}(\alpha_0+\alpha_2)w'(x_{n-2},\delta(n-3),\delta_{n-2}+\delta_n,\delta_{n-1})\\
            &  +  \lambda^{\delta_n} \sum_{i=3}^r\alpha_i w'(y_i,\delta(n-3),\delta_{n-2}+\delta_n,\delta_{n-1});
       \end{split}\\
       \begin{split}
          \overline{\varphi}_{\lambda,p}(g)\label{21u}
            & = \lambda^{\delta_n}p^{\delta_n} \alpha_2 w'(x_{n-2},\delta(n-3),\delta_{n-2},\delta_{n-1}+\delta_n)\\
            &  +  \lambda^{\delta_n} p^{\delta_n}\sum_{i=3}^r\alpha_i w'(y_i,\delta(n-3),\delta_{n-2},\delta_{n-1}+\delta_n).
       \end{split}
     \end{eqnarray}
     Thus, the right-hand sides of
(\ref{21d}) and
 (\ref{21u}) are linear combinations of basis elements
     Therefore,
     $\alpha_i=0$ for
     $i=2,3,\dots,r$ and
     $\alpha_0+\alpha_2=0$. Consequently,
     $\alpha_0=0$ as well. So,
     $g=0$, we get a contradiction.\\

     \noindent
     2.2. Let the smallest vertex of
     $T_g$ be in
     $T_2$. We have
     \begin{equation*}
       \begin{split}
          g & = \alpha_0 w(x_n,\delta(n-3),\delta_{n-2},\delta_{n-1},\delta_n)
              + \alpha_1 w(x_{n-1},\delta(n-3),\delta_{n-2},\delta_{n-1},\delta_n)\\
            & + \sum_{i=3}^r\alpha_i w(y_i,\delta(n-3),\delta_{n-2},\delta_{n-1},\delta_n).
       \end{split}
     \end{equation*}
     Hence,
     \begin{eqnarray}
       \begin{split}\label{22d}
          \underline{\varphi}_{\lambda,p}(g)
            & = \lambda^{\delta_n}\alpha_1 w'(x_{n-1},\delta(n-3),\delta_{n-2}+\delta_n,\delta_{n-1})\\
            &  +  \lambda^{\delta_n} \sum_{i=3}^r\alpha_i w'(y_i,\delta(n-3),\delta_{n-2}+\delta_n,\delta_{n-1});
       \end{split}\\
       \begin{split}
          \overline{\varphi}_{\lambda,p}(g)\label{22u}
            & = \lambda^{\delta_n}p^{\delta_n} (\alpha_0+\alpha_1) w'(x_{n-2},\delta(n-3),\delta_{n-2},\delta_{n-1}+\delta_n)\\
            &  +  \lambda^{\delta_n} p^{\delta_n}\sum_{i=3}^r\alpha_i w'(y_i,\delta(n-3),\delta_{n-2},\delta_{n-1}+\delta_n).
       \end{split}
     \end{eqnarray}
     The right-hand sides of
(\ref{22d}) and
 (\ref{22u}) are linear combinations of basis elements.
     Consequently,
     $\alpha_i=0$ for
     $i=1,3,4,\dots,r$ and
     $\alpha_0+\alpha_1=0$. Therefore,
     $\alpha_0=0$. So,
     $g=0$. It is again a contradiction.\\

     \noindent
     2.3. Suppose that the smallest vertex of
     $T_g$ is neither in
     $T_1$ nor in
     $T_2$. Without loss of generality we may suppose that it is
     in
     $T_r$. We have
     \begin{equation*}
       \begin{split}
          g & = \alpha_0 w(x_n,\delta(n-3),\delta_{n-2},\delta_{n-1},\delta_n)
              + \alpha_1 w(x_{n-1},\delta(n-3),\delta_{n-2},\delta_{n-1},\delta_n)\\
            & + \alpha_2 w(x_{n-2},\delta(n-3),\delta_{n-2},\delta_{n-1},\delta_n)
              + \sum_{i=3}^{r-1}\alpha_i w(y_i,\delta(n-3),\delta_{n-2},\delta_{n-1},\delta_n).
       \end{split}
     \end{equation*}
     Therefore,
     \begin{eqnarray}
       \begin{split}\label{23d}
          \underline{\varphi}_{\lambda,p}(g)
            & = \lambda^{\delta_n}\alpha_1 w'(x_{n-1},\delta(n-3),\delta_{n-2}+\delta_n,\delta_{n-1})\\
            &  + \lambda^{\delta_n}(\alpha_0+\alpha_2) w'(x_{n-2},\delta(n-3),\delta_{n-2}+\delta_n,\delta_{n-1})\\
            &  +  \lambda^{\delta_n} \sum_{i=3}^{r-1} \alpha_i w'(y_i,\delta(n-3),\delta_{n-2}+\delta_n,\delta_{n-1});
       \end{split}\\
       \begin{split}\label{23u}
          \overline{\varphi}_{\lambda,p}(g)
            & =\lambda^{\delta_n}p^{\delta_n}(\alpha_0+\alpha_1) w'(x_{n-1},\delta(n-3),\delta_{n-2},\delta_{n-1}+\delta_n)\\
            &  + \lambda^{\delta_n}p^{\delta_n}\alpha_2 w'(x_{n-2},\delta(n-3),\delta_{n-2},\delta_{n-1}+\delta_n)\\
            &  +  \lambda^{\delta_n} p^{\delta_n}\sum_{i=3}^{r-1} \alpha_i w'(y_i,\delta(n-3),\delta_{n-2},\delta_{n-1}+\delta_{n}).
       \end{split}
     \end{eqnarray}
     The right-hand sides of
(\ref{23d}) and
 (\ref{23u}) are linear combinations of basis elements.
     Hence,
     $\alpha_i=0$ for
     $i=1,2,\dots,r-1$ and
     $\alpha_0+\alpha_1=0$. Consequently,
     $\alpha_0=0$ as well. As in two last cases,
     $g=0$, a contradiction.\\

     \noindent
     3. Suppose that
     $x,x_n\in \supp(g)$. Let
     $T_0,T_1, \dots, T_r$ be connected components of
     $T_g$ and
     $y_0=x_n,y_1,\dots y_r$ the greatest vertices of the corresponding components. In this case,
     $x$ is the smallest vertex of
     $\supp(g)$ and
     $x\in T_0$. Let
     $\widetilde{T}$ be the subgraph of
     $T$ generated by
     $\supp(\varphi_{\lambda,p}(g))$. Then the connected components of
     $\widetilde{T}$ are
     $T'_0, T_1,T_2,\dots,T_r$, where
     $T'_0$ is a subgraph of
     $T_0$ by deleting
     $x_n$ (and the incident edge
     $\{x_n,x\}$). We obtain the following representation:
     \begin{equation}
       g=\sum_{i=1}^r \alpha_i w(y_i,\delta(n-3),\delta_{n-2},\delta_{n-1},\delta_n).
     \end{equation}
     Consequently,
     \begin{equation} \label{3u}
       \overline{\varphi}_{\lambda,p}(g)=
         \lambda^{\delta_n}p^{\delta_n}\sum_{i=1}^r \alpha_i w'(y_i,\delta(n-3),\delta_{n-2},\delta_{n-1}+\delta_n).
     \end{equation}
     The right-hand side of
(\ref{3u}) is a linear combination of basis elements.
     Therefore,
     $\alpha_i=0$ for
     $i=1,2,\dots,r$. Thus,
     $g=0$ and we get a contradiction.
   \end{proof}

  Let
  $X^*$ denote the set of all vertices of
  $T$ that are not endpoints and let
  $T^*$ be a subgraph of
  $T$ generated by
  $X^*$.

  Now, we describe the procedure constructing a tree
  $T'$ with no unnecessary vertices for any tree
  $T$. If there are unnecessary vertices in
  $T$ then we delete one of them as well as the edge incident to it. If the obtained graph still
  has unnecessary vertices we again delete one of them. Let us continue this procedure
  until the resulting graph has no unnecessary
  vertices. Denote by
  $T'$ the obtained tree and by
  $X'$ the set of its vertices (Figure~%
\ref{t't*} gives an example).

  \begin{figure}
  \begin{picture}(300,200)
    \put(40,20){\line(5,3){30}}
    \put(40,20){\circle*{4}}
    \put(68,38){\circle*{4}}
    \put(68,38){\line (1,1){30}}
    \put(68,38){\line (2,1){60}}
    \put(98,68){\circle*{4}}
    \put(128,68){\circle*{4}}
    \put(40,20){\line(-2,3){12}}
    \put(28,38){\circle*{4}}
    \put(28,38){\line(-2,3){20}}
    \put(28,38){\line(0,3){30}}
    \put(28,38){\line(2,3){20}}
    \put(8,68){\circle*{4}}
    \put(28,68){\circle*{4}}
    \put(48,68){\circle*{4}}
    \put(8,68){\line(0,3){30}}
    \put(8,97){\circle*{4}}
    \put(40,1){\makebox(0,0){$T$}}
    \put(240,20){\line(5,3){30}}
    \put(240,20){\circle*{4}}
    \put(268,38){\circle*{4}}
    \put(268,38){\line (1,1){30}}
    \put(298,68){\circle*{4}}
    \put(240,20){\line(-2,3){12}}
    \put(228,38){\circle*{4}}
    \put(228,38){\line(-2,3){20}}
    \put(208,68){\circle*{4}}
    \put(208,68){\line(0,3){30}}
    \put(208,97){\circle*{4}}
    \put(240,1){\makebox(0,0){$T'$}}
    \put(140,120){\line(5,3){30}}
    \put(140,120){\circle*{4}}
    \put(168,138){\circle*{4}}
    %\put(168,38){\line (1,1){30}}
    %\put(198,68){\circle*{4}}
    \put(140,120){\line(-2,3){12}}
    \put(128,138){\circle*{4}}
    \put(128,138){\line(-2,3){20}}
    \put(108,168){\circle*{4}}
    %\put(108,68){\line(0,3){30}}
    %\put(108,97){\circle*{4}}
    \put(140,100){\makebox(0,0){$T^*$}}
  \end{picture}
  \caption{Graphs
  $T'$ and
  $T^*$.}
  \label{t't*}
  \end{figure}
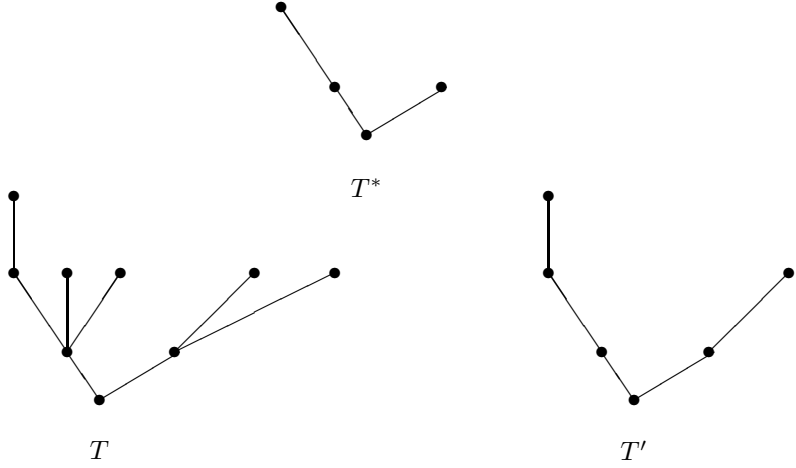

  \noindent
  It is easy to see that for any graph
  $T$ the obtained graphs with no unnecessary vertices are isomorphic no matter what vertices are deleted.

  \begin{llll}\label{homfamily}
    Let
    $L_1$ and
    $L_2$ be two Lie algebras and let
    $\varphi_{\lambda, p}: L_1 \rightarrow L_2$ be the set of homomorphisms with
    integer positive parameters
    $\lambda$ and
    $p$. Suppose that for any
    $g\in L_1$ there are integers
    $p_0$ and
    $\lambda_0=\lambda_0 (p_0)$ such that
    $\varphi_{\lambda, p}(g)\neq 0$
    whenever
    $p\geqslant p_0$ and
    $\lambda\geqslant \lambda_0$. Then for any finite set
    $\Gamma=\{g_1, \dots, g_m\}$ of elements in
    $L_1$ there exist integers
    $\lambda$ and
    $p$ such that the restriction of
    $\varphi_{\lambda, p}$ on
    $\Gamma$ (with partial Lie algebra operations) is an inclusion.
   \end{llll}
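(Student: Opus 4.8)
The plan is to reduce the assertion ``the restriction of $\varphi_{\lambda,p}$ to $\Gamma$ is an inclusion'' to finitely many non-vanishing conditions, and then to satisfy all of them simultaneously by choosing the two threshold parameters large enough. First I would pin down what the partial-structure inclusion actually demands. Since we are dealing with Lie rings, the relational language (in the sense of Theorem~\ref{univeq}) carries only equality together with the ternary graphs of addition and of the bracket; scalar multiplication is derived from $+$. Because each $\varphi_{\lambda,p}$ is a homomorphism, it automatically preserves these relations, so the only content is injectivity together with the \emph{reflection} of the two operation relations: for all $i,j,k$ we must have $\varphi_{\lambda,p}(g_i)=\varphi_{\lambda,p}(g_j)\Rightarrow g_i=g_j$, $[\varphi_{\lambda,p}(g_i),\varphi_{\lambda,p}(g_j)]=\varphi_{\lambda,p}(g_k)\Rightarrow [g_i,g_j]=g_k$, and $\varphi_{\lambda,p}(g_i)+\varphi_{\lambda,p}(g_j)=\varphi_{\lambda,p}(g_k)\Rightarrow g_i+g_j=g_k$.

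Using that $\varphi_{\lambda,p}$ is additive and bracket-preserving, a failure of any of these implications forces $\varphi_{\lambda,p}(w)=0$ for a \emph{nonzero} element $w\in L_1$ of one of the forms $g_i-g_j$, $[g_i,g_j]-g_k$, or $(g_i+g_j)-g_k$ (taking $\Gamma$ to contain the named constant $0$ handles non-vanishing of the individual $g_i$, which is the case $g_j=0$). So I would collect into a finite set $W$ all nonzero elements of $L_1$ of these forms; there are at most $m^2+2m^3$ of them. It then suffices to produce $\lambda,p$ with $\varphi_{\lambda,p}(w)\neq 0$ for every $w\in W$, since this yields injectivity and both reflection properties at once, hence an embedding of the induced partial structure.

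Finally I would invoke the hypothesis separately for each $w\in W$ to obtain $p_0(w)$ and $\lambda_0(w)=\lambda_0(w,p_0(w))$ such that $\varphi_{\lambda,p}(w)\neq 0$ whenever $p\geqslant p_0(w)$ and $\lambda\geqslant\lambda_0(w)$. Putting $p=\max_{w\in W}p_0(w)$ and $\lambda=\max_{w\in W}\lambda_0(w)$, for each $w$ we have both $p\geqslant p_0(w)$ and $\lambda\geqslant\lambda_0(w)$, so $\varphi_{\lambda,p}(w)\neq 0$ for all $w\in W$ simultaneously, and this choice of $(\lambda,p)$ finishes the proof.

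The step I would scrutinise most is the bookkeeping around the dependence $\lambda_0=\lambda_0(p_0)$. The hypothesis grants non-vanishing on the full quadrant $\{p\geqslant p_0,\ \lambda\geqslant\lambda_0\}$, so enlarging $p$ past $p_0(w)$ never spoils the guarantee for $w$, and taking coordinatewise maxima simply lands us in the intersection of all these quadrants. This is the only place where one must be careful; everything else is the routine translation of ``partial-structure embedding'' into the non-vanishing of the finitely many test elements in $W$, which relies solely on $\varphi_{\lambda,p}$ being an additive, bracket-preserving map.
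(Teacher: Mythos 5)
Your proposal is correct and follows essentially the same route as the paper: reduce the embedding condition to the non-vanishing of the finitely many test elements $g_i-g_j$, $g_i+g_j-g_k$, $[g_i,g_j]-g_k$, then apply the hypothesis to each and take maxima of the thresholds. The only cosmetic difference is that you spell out the predicate-language translation and the quadrant bookkeeping more explicitly than the paper does.
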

   \begin{proof}
     Let us extend
     $G=\{g_1,\dots,g_m\}$ adding the elements
     $g_i-g_j$,
     $g_i+g_j-g_k$,
     $[g_i,g_j]-g_k$ for all
     $i,j,k=1,2\dots ,m$. Denote by
     $\overline{\Gamma}$ the obtained set. It is sufficient to show that there
     exist
     $\lambda$ and
     $p$ such that the kernel of
     $\varphi_{\lambda,p}: L_1\rightarrow L_2$ is disjoint with
     $\overline{\Gamma}$. Indeed, if it is the case then the images of the elements in
     $\Gamma$ are distinct. Moreover, if
     $g_i\neq g_j+g_k$ or
     $g_i\neq [g_j,g_k]$ then the images of
     $g_i$ and
     $g_j+g_k$ (images of
     $g_i$ and
     $[g_j,g_k]$ respectively) are not equal either.

     By Lemmas~
\ref{tononero} and
 \ref{tononzero}, for each
     $g\in \overline{\Gamma}$ one can choose
     $p_0(g)$ and
     $\lambda_0 (g,p)$ such that
     $\varphi_{\lambda,p}(g)\neq 0$ for any
     $p\geqslant p_0(g)$ and
     $\lambda\geqslant \lambda_0(g,p)$. First, let us choose
     $p_0(g)$ for each
     $g\in \overline{\Gamma}$. Denote by
     $p_0$ the greatest value out of
     $p_0(g)$. For this
     $p_0$, we find
     $\lambda_{0}(g,p_0)$ for each
     $g$. Let
     $\lambda_0$ be the greatest number among them. Then, for any
     $\lambda\geqslant\lambda_0$,
     $p\geqslant p_0$, and
     $g\in \overline{\Gamma}$ we obtain
     $\varphi_{\lambda,p}(g)\neq 0$. The proof is complete.
   \end{proof}

   \begin{llll}
     $T_1^* \simeq T_2^*$ if and only if
     $T'_1\simeq T'_2$.
   \end{llll}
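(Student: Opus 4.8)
The plan is to show that the two reduced trees $T^*$ and $T'$ determine each other up to isomorphism, via an explicit reconstruction. Write $\mathrm{core}(S)$ for the subgraph of a tree $S$ induced on its non-endpoints, so that $T^*=\mathrm{core}(T)$, and let $\Psi$ be the operation that attaches $\max\bigl(0,\,2-\deg_{S}(v)\bigr)$ new pendant vertices to each vertex $v$ of a tree $S$ (for $|V(S)|\geqslant 2$ this adds one pendant to every endpoint of $S$, and for a single vertex it produces $P_3$). I claim that for every tree $T$ with at least three vertices,
\[
   T'\simeq\Psi(T^*)\qquad\text{and}\qquad \mathrm{core}(T')=T^*.
\]
Granting these, the lemma is immediate. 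An isomorphism of trees preserves degrees, hence maps endpoints to endpoints and so restricts to an isomorphism of cores; thus $T_1'\simeq T_2'$ gives $\mathrm{core}(T_1')\simeq\mathrm{core}(T_2')$, i.e.\ $T_1^*\simeq T_2^*$. Conversely $\Psi$ depends only on the isomorphism type of its argument, so $T_1^*\simeq T_2^*$ yields $\Psi(T_1^*)\simeq\Psi(T_2^*)$, i.e.\ $T_1'\simeq T_2'$.

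To prove $T'\simeq\Psi(T^*)$ I would invoke the fact, noted just before the lemma, that the reduced tree does not depend on the order in which unnecessary endpoints are deleted: it then suffices to exhibit one deletion sequence from $T$ ending at $\widehat T:=\Psi(T^*)$ and to check that $\widehat T$ has no unnecessary endpoints. First observe that $T^*$ is connected (the path in $T$ joining two non-endpoints has only non-endpoints as interior vertices), hence is itself a tree, and that every endpoint of $T$ is adjacent to a vertex of $T^*$; thus $T$ is exactly $T^*$ together with $\ell(v):=\deg_T(v)-\deg_{T^*}(v)$ pendant leaves at each $v\in T^*$. The heart of the argument is a case analysis on $d:=\deg_{T^*}(v)$: if $d\geqslant 2$ each pendant leaf at $v$ is unnecessary (its neighbour $v$ has degree $\geqslant 3$ as long as one pendant remains), so all are deleted; if $d=1$ or $d=0$ the pendants at $v$ are deleted one at a time until exactly $2-d$ remain, at which point $\deg(v)=2$ and the surviving pendants cease to be unnecessary. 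In each case $\ell(v)\geqslant 2-d$ because $\deg_T(v)\geqslant 2$, so the prescribed deletions are available; moreover no vertex of $T^*$ is ever removed, since edges inside $T^*$ join two non-endpoints and are never deleted, and each $v\in T^*$ keeps degree $\max(d,2)\geqslant 2$. The result is precisely $\widehat T=\Psi(T^*)$, and every endpoint of $\widehat T$ is a pendant whose neighbour has degree $2$, so $\widehat T$ has no unnecessary endpoints; by uniqueness $T'\simeq\widehat T$. The identity $\mathrm{core}(T')=T^*$ then follows at once: the endpoints of $\widehat T$ are exactly its pendant leaves, deleting them returns $T^*$, and every vertex of $T^*$ has degree $\geqslant 2$ in $\widehat T$ and so survives.

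The main obstacle is the bookkeeping in this case analysis—verifying that the condition ``neighbour has degree $\geqslant 3$'' holds at each deletion and fails exactly when $2-d$ pendants remain, and that core vertices are never pruned. The only degenerate cases are $|V(T)|\leqslant 2$ (namely $P_1$ and $P_2$), where $T^*$ is a single isolated vertex, respectively empty, so that $\Psi$ has no pendant to attach; these carry at most one edge and are checked directly (for $P_2$ one has $T^*=\varnothing$, which forces $T=P_2$ and $T'=P_2$). In particular they do not occur once $T$ has at least two non-endpoints, the situation relevant to Lemma~\ref{visvert}.
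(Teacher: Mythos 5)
Your proof is correct and follows essentially the same route as the paper's: both establish that $T'$ is obtained canonically from $T^*$ by attaching one pendant to each endpoint of $T^*$ (your $\Psi$), and that $T^*$ is recovered from $T'$ as its subgraph of non-endpoints, so each determines the other up to isomorphism. Your version is somewhat more careful --- the explicit deletion sequence and the $\max(0,\,2-\deg)$ formula correctly cover the case where $T^*$ is a single vertex (i.e.\ $T$ a star), which the paper's recipe of ``one pendant per endpoint of $T^*$'' does not literally handle, since an isolated vertex has degree $0$ and is not an endpoint.
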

   \begin{proof}
     Note that if
     $T$ is a tree then so is
     $T^*$. The graph
     $T'$ is obtained from the graph
     $T^*$ as follows: for each endpoint
     $x$ of the graph
     $T^*$, we add a new vertex
     $y$  and the edge
     $\{x,y\}$. Indeed, all vertices of
     $T^*$ are not endpoints in
     $T$. It means that their degrees in
     $T'$ are at least
     $2$. If an edge of
     $T'$ is not contained in
     $T^*$ then it cannot connect two vertices of
     $X^*$, otherwise, there is a cycle in
     $T'$. Moreover, different endpoints of
     $T'$ are adjacent to different vertices, because each vertex of
     $X^*$ cannot be adjacent to more than one endpoint in
     $T'$. We are left to notice that if
     $x$ is not an endpoint in
     $T^*$ then its degree in
     $T^*$ is at least
     $2$. It means that if
     $T$ has an endpoint adjacent to
     $x$, then there are no these vertices in
     $T'$.

     Let
     $T^*_1 \simeq T^*_2$. Since
     $T'$ can be obtained from
     $T^*$ uniquely, we have
     $T'_1 \simeq T'_2$.

     The inverse statement is obvious, i.e. if
     $T'_1 \simeq T'_2$ then
     $T^*_1\simeq T^*_2$.
   \end{proof}

   \begin{ttt}\label{main}
     Let
     $T_1$ and
     $T_2$ be trees with the sets of vertices
     $X=\{x_1,x_2,\dots x_{n_1}\}$ and
     $Y=\{y_1,y_2,\dots, y_{n_2}\}$ respectively,
     $n_1,n_2\geqslant 2$. The algebras
     $M(X;T_1)$ and
     $M(Y;T_2)$ are universally equivalent if and only if
     the trees
     $T^*_1$ and
     $T^*_2$ are isomorphic.
   \end{ttt}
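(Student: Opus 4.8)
The plan is to prove the two implications with different tools: Theorem~\ref{univeq} (the finite-model criterion) for sufficiency, and the characteristic formula of Lemma~\ref{charformula} for necessity.

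\emph{Sufficiency.} First I would show that deleting a single unnecessary endpoint preserves the universal theory: if $x_n$ is an unnecessary endpoint of $T$ and $X'=X\setminus\{x_n\}$, then $M(X;T)\equiv_\forall M(X';T_{X'})$. One half of the finite-model criterion is immediate, since $T_{X'}$ is an induced subtree, so (comparing the bases of Theorem~\ref{baspcom}) the natural map $M(X';T_{X'})\to M(X;T)$ is an embedding; thus every finite submodel of the smaller algebra is already a finite submodel of the larger one. For the opposite direction I would use the homomorphisms $\varphi_{\lambda,p}$: Lemmas~\ref{tononero} and~\ref{tononzero} guarantee that $\varphi_{\lambda,p}(g)\neq 0$ for every fixed nonzero $g$ once $\lambda,p$ are large, so Lemma~\ref{homfamily} produces, for each finite $\Gamma\subseteq M(X;T)$, parameters $\lambda,p$ for which $\varphi_{\lambda,p}$ is injective on $\Gamma$ and respects the partial operations. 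Hence every finite submodel of $M(X;T)$ is isomorphic to one of $M(X';T_{X'})$, and Theorem~\ref{univeq} yields the equivalence. Iterating the deletion until no unnecessary endpoint remains gives $M(X;T)\equiv_\forall M(X';T')$ with $T'$ the reduced tree. Since $T_1^*\simeq T_2^*$ is equivalent to $T_1'\simeq T_2'$ by the preceding lemma, the reduced trees, and hence the algebras $M(X_1';T_1')$ and $M(X_2';T_2')$, are isomorphic, and transitivity gives $M(X;T_1)\equiv_\forall M(Y;T_2)$.

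\emph{Necessity.} Assume $M(X;T_1)\equiv_\forall M(Y;T_2)$ (equivalently, existentially equivalent) and treat first the case $k:=|X_1^*|\geq 2$. The existential sentence $\Phi(T_1)$ of Lemma~\ref{charformula} holds in $M(X;T_1)$, hence in $M(Y;T_2)$, and its last two conjunctions encode exactly the adjacency of $T_1^*$ on the witnesses $z_1,\dots,z_k$. The core step is to read these witnesses as vertices of $T_2$. Writing $z_i=\zeta_i+w_i$ with $w_i\in M'(Y;T_2)$ and $\zeta_i$ the linear part, metabelianness turns $[u_i,v_i,z_i]=0$ into $[u_i,v_i]\in\mathcal{C}(\zeta_i)$; since $[u_i,v_i]\neq 0$, Theorem~\ref{centlincomb} forces $\zeta_i$ to be a multiple of a single vertex $y_{s_i}$, and Theorem~\ref{centrgen} forces $y_{s_i}$ to be a non-endpoint of $T_2$. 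The cross conditions $[u_i,v_i,z_j]\neq 0$ rule out $z_i\in M'$ (so $\zeta_i=\alpha_i y_{s_i}$ with $\alpha_i\neq 0$), and together with the $\mathbb{Z}$-torsion-freeness of $M(Y;T_2)$ (a free module by Theorem~\ref{baspcom}) they force the $y_{s_i}$ to be pairwise distinct. Finally, comparing multidegrees, the degree-two component of $[z_i,z_j]$ is a nonzero multiple of $[y_{s_i},y_{s_j}]$, so $[z_i,z_j]=0$ implies $y_{s_i}$ and $y_{s_j}$ are adjacent; thus $x_i\mapsto y_{s_i}$ carries the edges of $T_1^*$ to edges of $T_2^*$.

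Consequently $x_i\mapsto y_{s_i}$ is a vertex-injective, edge-preserving map $T_1^*\to T_2^*$; running the same argument with $\Phi(T_2)$ inside $M(X;T_1)$ produces such a map $T_2^*\to T_1^*$. Since $T_1^*$ and $T_2^*$ are trees, the two injections force $|X_1^*|=|X_2^*|=:m$, whence each map is a vertex bijection carrying the $m-1$ edges bijectively onto the $m-1$ edges, i.e.\ a graph isomorphism, so $T_1^*\simeq T_2^*$. The degenerate cases $|X_i^*|\leq 1$ I would settle directly: Lemma~\ref{visvert} forbids a tree with at least two non-endpoints being universally equivalent to one with at most one, while the existential sentence $\exists a\,b\,([a,b]\neq 0)$ separates the single edge $K_2$ (abelian) from the stars, and every star has a one-vertex $T^*$. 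The main obstacle is precisely the witness analysis in the necessity direction: it is here that the full strength of the centralizer results (Theorems~\ref{centrgen}, \ref{centinter}, \ref{centlincomb}) is needed to guarantee that each witness collapses to a single non-endpoint vertex and that the brackets among the witnesses faithfully record the adjacency of the underlying vertices rather than being contaminated by their derived-subalgebra parts.
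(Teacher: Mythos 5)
Your proposal is correct and follows essentially the same route as the paper: necessity via the characteristic sentence $\Phi(T_1)$ of Lemma~\ref{charformula} with the witnesses $z_i$ pinned down to single non-endpoint vertices by the centralizer results (Theorems~\ref{centrgen} and~\ref{centlincomb}), and sufficiency via the finite-model criterion of Theorem~\ref{univeq}, the subalgebra embedding in one direction, and the family $\varphi_{\lambda,p}$ together with Lemmas~\ref{tononero}, \ref{tononzero}, and~\ref{homfamily} in the other, iterated over deletions of unnecessary endpoints. The handling of the degenerate cases and the two-injections-imply-isomorphism finish also match the paper's argument.
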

   \begin{proof}
     Let the algebras
     $M(X;T_1)$ and
     $M(Y;T_2)$ be universally equivalent. We prove that
     $T^*_1$ and
     $T^*_2$ are isomorphic.

     If all vertices of
     $T_1$ are endpoints then
     $n_1=2$. Consequently,
     $M(X;T_1)$ is the free two-generated abelian Lie algebra. So, the formula
     \begin{equation}\label{commut}
        \exists v_1 v_2([v_1,v_2]\neq 0)
     \end{equation}
     does not hold in
     $M(X;T_1)$. Therefore, this formula does not hold in
     $M(Y;T_2)$ either. But if
     $n_2\geqslant 3$ then
     $T_2$ is not a complete graph (because
     $T_2$ is a tree). Consequently,
     $[y_{i_1},y_{i_2}]\neq 0$ for some vertices
     $y_{i_1},y_{i_2} \in Y$. So, formula
(\ref{commut}) holds. We get a contradiction. Therefore,
     $n_2=2$ but all trees with two vertices are isomorphic. It
     means that
     $T_1 \simeq T_2$, therefore,
     $T^*_1 \simeq T^*_2$ and we are done. Note that, in this case,
     $T^*_1$ and
     $T^*_2$ are empty graphs.

     Let each graph
     $T_1$ and
     $T_2$ have exactly one vertex that is not an endpoint. Then
     $|X^*|=|Y^*|=1$ and the graphs
     $T^*_1$ and
     $T^*_2$ are isomorphic.

     Lemma
\ref{visvert} implies that if
     $m\geqslant 2$ vertices of
     $T_1$ are not endpoints then at least two vertices of
     $T_2$ are not endpoints.

     By Lemma~%
\ref{charformula}, the formula
     $\Phi(T_1)$ constructed by
     $T_1$ (see~%
(\ref{longformula})) holds in
     $M(X;T_1)$. Since
     $M(X;T_1)$ and
     $M(Y;T_2)$ are universally equivalent,
     $\Phi(T_1)$ also holds in
     $M(Y;T_2)$. Elements
     $z_i$ can be written as
     $z_i=d_i+c_i$, where
     $d_i=\sum_{j=1}^n\alpha_{i,j}y_j$ for some
     $\alpha_{i,j}\in \mathbb{Z}$ and
     $c_i\in M'(Y;T_2)$. Since
     $[u_i,v_i]\in M'(Y;T_2)$, we obtain
     $[u_i,v_i,z_i]=[u_i,v_i,d_i+c_i]=[u_i,v_i,d_i]$.
     Therefore,
     $[u_i,v_i]\in \mathcal{C}(d_i)$. But if the sum
     $d_i=\sum_{j=1}^n\alpha_{ij}y_j$ has at least two non-zero summands, then
     $\mathcal{C}(d_i)=0$. On the other hand, if
     $d_i=0$ (i.e.
     $z_i\in M'(Y;T_2)$), then
     $[u_s,v_s,z_i]=0$ for each
     $s\neq i$. Therefore,
     $z_i=\beta_i y_{k_i}+c_i$, where
     $\beta_i\neq 0$.

     Let us prove some conditions of the elements
     $z_i$ in
     $M(Y;T_2)$.\\[-0.8em]

     \noindent
     1. If
     $i\neq j$,  then
     $y_{k_i}\neq y_{k_j}$.

     Indeed, if the equality
     $y_{k_i}=y_{k_j}$ holds for some
     $i$ and
     $j$, then we obtain
     $0=[u_i,v_i,z_i]=[u_i,v_i,\beta_i y_{k_i}+c_i]=
     \beta_i[u_i,v_i,y_{k_i}]$. So,
     $[u_i,v_i,y_{k_i}]=0$. On the other hand, using the same arguments for
     $[u_i,v_i,z_j]$ gives
     $0\neq[u_i,v_i,y_{k_j}]=[u_i,v_i,y_{k_i}]$. We get a contradiction.\\[-0.8em]

     \noindent
     2. The vertices
     $y_{k_1},y_{k_2}\dots, y_{k_m}$ are not endpoints.

     Let
     $y_{k_i}$ be an endpoint. We have
     $0=[u_i,v_i,z_i]=[u_i,v_i,\beta_i y_{k_i}+c_i]=\beta_i [u_i,v_i,y_{k_i}]$. Consequently,
     $[u_i,v_i]\in \mathcal{C}(y_{k_i})$. So, by Theorem,~%
\ref{centrgen}
     $[u_i,v_i]=0$. It is a contradiction.\\[-0.8em]

     \noindent
     3. If
     $x_i$ and
     $x_j$ are adjacent in
     $T_1$, then
     $y_{k_i}$ and
     $y_{k_j}$ are adjacent in
     $T_2$.

     If there is the edge
     $\{x_i,x_j\}$ in
     $T_1$, then
     $[z_i,z_j]=0$. Therefore, we have in
     $M(Y;T_2)$:
     \begin{equation}
       \begin{split}
        0 &=[\beta_i y_{k_i}+c_i,\beta_j y_{k_j}+c_j]\\
          &=\beta_i\beta_j[y_{k_i},y_{k_j}]+\beta_i[y_{k_i},c_j]+\beta_j[c_i,y_{k_j}]
       \end{split}
     \end{equation}
     Since the lengthes of all summands in representations of
     $[y_{k_i},c_j]$ and
     $\beta_j[c_i,y_{k_j}]$ as linear combinations of basis elements are greater than 2,
     $[y_{k_i},y_{k_j}]=0$. This means that
     $y_{k_i}$ and
     $y_{k_j}$ are adjacent in
     $T_2$.

     Consider the map
     $\varphi: T^*_1 \rightarrow T^*_2$ defined as follows:
     $\varphi(x_i)=y_{k_i}$. By the properties proved above
     $\varphi$ is an injective homomorphism. Interchanging
     $T_1$ and
     $T_2$ and using the same arguments, we obtain there exists an
     injective homomorphism
     $\psi: T^*_2 \rightarrow T^*_1$. So, in particular
     $|X^*|=|Y^*|$. Therefore,
     $\varphi$ is an isomorphism of the trees
     $T^*_1$ and
     $T^*_2$.

     Conversely, suppose that
     $T_1^*\simeq T_2^*$. Let us show that
     $M(X;T_1)$ and
     $M(Y;T_2)$ are universally equivalent.

     Suppose that the tree
     $\widetilde{T}$ is obtained from the tree
     $T$ by deleting the unnecessary vertex
     $x_n$ and that
     $\widetilde{X}=X\backslash \{x_n\}$. Let us show that the universal theories of
     $M(X;T)$ and
     $M(\widetilde{X};\widetilde{T})$ coinside. By Theorem~%
\ref{univeq}, we are left
     to prove that any finite submodel of
     $M(X;T)$ has an isomorphic submodel in
     $M(\widetilde{X};\widetilde{T})$ and vice versa.

     In one direction the statement is obviously true because
     $M(\widetilde{X};\widetilde{T})$ is a subalgebra of
     $M(X;T)$. Therefore, any finite submodel
     $\{g_1,\dots,g_m\}$ of
     $M(\widetilde{X};\widetilde{T})$ can be considered as a finite submodel of
     $M(X;T)$. In the other direction, the statement holds by Lemma~%
\ref{homfamily}.

     So, the universal theories of
     $M(X;T)$ and
     $M(\widetilde{X};\widetilde{T})$ coinside.

     Consider the trees
     $T_1$ and
     $T_2$. Deleting unnecessary vertices of these graphs one by one, we obtain the graphs
     $T'_1$ and
     $T'_2$. As we just proved, the algebras
     $M(X';T'_1)$ and
     $M(X;T_1)$ are universally equivalent. Analogously,
     the algebras
     $M(Y';T'_2)$ and
     $M(Y;T_2)$ are also universally equivalent. Since
     $T^*_1\simeq T^*_2$ we have
     $T'_1\simeq T'_2$ as was shown above. Therefore, the universal theories of
     $M(X';T'_1)$ and
     $M(Y';T'_2)$ coincide. Consequently, the algebras
     $M(X;T_1)$ and
     $M(Y;T_2)$ are universally equivalent.
   \end{proof}

   Let us note that partially commutative metabelian groups and their theories were studied in
\cite{GT09,Tim10,GT11}. In
 \cite{Tim10}, the criterium of
   universal equivalence for partially
   commutative metabelian groups whose defining graphs are trees was obtained. The obtained there
   necessary and sufficient conditions for universal equivalence of groups are same as the conditions
   of Theorem~%
\ref{main}.

 \end{document}